\newtheorem{theorem}{Theorem}[section]
\newtheorem{lemma}[theorem]{Lemma}
\newtheorem{proposition}[theorem]{Proposition}
\newtheorem{corollary}[theorem]{Corollary}
\theoremstyle{definition} 
\newtheorem{definition}[theorem]{Definition}
\newtheorem{example}[theorem]{Example}
\newtheorem{remark}[theorem]{Remark}
\newtheorem{conjecture}[theorem]{Conjecture}
\newtheorem{proposal}[theorem]{Proposal}
\DeclareMathOperator{\Alg}{Alg}
\DeclareMathOperator{\Cob}{2Cob}
\DeclareMathOperator{\ext}{ext}
\DeclareMathOperator{\id}{id}
\DeclareMathOperator{\im}{im}
\DeclareMathOperator{\intt}{int}
\DeclareMathOperator{\open}{open}
\DeclareMathOperator{\Rep}{Rep}
\DeclareMathOperator{\SAlg}{SAlg}
\DeclareMathOperator{\spann}{span}
\newcommand{\C}{\mathbb{C}}
\newcommand{\Cc}{\mathcal{C}}
\newcommand{\cpp}{\mathrm{closed}++}
\newcommand{\Dc}{\mathcal{D}}
\newcommand{\F}{\mathbb{F}}
\newcommand{\g}{\mathfrak{g}}
\newcommand{\gl}{\mathfrak{gl}}
\newcommand{\opp}{\mathrm{open}++}
\newcommand{\Pc}{\mathcal{P}}
\newcommand{\pgl}{\mathfrak{pgl}}
\newcommand{\psl}{\mathfrak{psl}}
\newcommand{\Q}{\mathbb{Q}}
\newcommand{\R}{\mathbb{R}}
\newcommand{\Sc}{\mathcal{S}}
\newcommand{\Z}{\mathbb{Z}}
\newcommand{\Zb}{\mathbf{Z}}
\newcommand{\ootimes}{ 
  \mathbin{
    \mathchoice
      {\buildcircleotimes{\displaystyle}}
      {\buildcircleotimes{\textstyle}}
      {\buildcircleotimes{\scriptstyle}}
      {\buildcircleotimes{\scriptscriptstyle}}
  } 
}
\newcommand\buildcircleotimes[1]{%
  \begin{tikzpicture}[baseline=(X.base), inner sep=0, outer sep=0]
    \node[draw,circle] (X)  {$#1\otimes$};
  \end{tikzpicture}%
}
\title[Actions of both E and F]{Decategorified Heegaard Floer theory and actions of both E and F}
\author[Andrew Manion]{Andrew Manion}
\address{Department of Mathematics, North Carolina State University, 2108 SAS Hall, Raleigh, NC 27695}
\email{ajmanion@ncsu.edu}
\begin{document}

\begin{abstract}
    We define larger variants of the vector spaces one obtains by decategorifying bordered (sutured) Heegaard Floer invariants of surfaces. We also define bimodule structures on these larger spaces that are similar to, but more elaborate than, the bimodule structures that arise from decategorifying the higher actions in bordered Heegaard Floer theory introduced by Rouquier and the author. In particular, these new bimodule structures involve actions of both odd generators $E$ and $F$ of $\mathfrak{gl}(1|1)$, whereas the previous ones only involved actions of $E$. Over $\F_2$, we show that the new bimodules satisfy the necessary gluing properties to give a 1+1 open-closed TQFT valued in graded algebras and bimodules up to isomorphism; in particular, unlike in previous related work we have a gluing theorem when gluing surfaces along circles as well as intervals. Over the integers, we show that a similar construction gives two partially-defined open-closed TQFTs with two different domains of definition depending on how parities are chosen for the bimodules. We formulate conjectures relating these open-closed TQFTs with the $\mathfrak{psl}(1|1)$ Chern--Simons TQFT recently studied by Mikhaylov and Geer--Young.
\end{abstract}

\maketitle

\tableofcontents

\section{Introduction}

This paper aims to address one of the most natural and commonly-asked questions about work of Rouquier and the author \cite{ManionRouquier} as well as related work \cite{LaudaManion,ManionTrivalent, ChangManion,ManionDHA,ManionDHASigns}: out of the generators $E,F,K_1^{\pm 1}, K_2^{\pm 1}$ of $U_q(\gl(1|1))$, \cite{ManionRouquier} and these other papers feature higher actions of $E$ and not $F$ (or $F$ and not $E$ in different conventions) on the strands algebras of bordered sutured Heegaard Floer homology. Duals of $E$ feature prominently in the story but upon decategorification they satisfy a different type of relation with $E$ than do $E,F \in U_q(\gl(1|1))$. Why don't we have both $E$ and $F$? This paper proposes an explanation as well as, at the decategorified level, a modified setup in which actions of both $E$ and $F$ appear naturally. In this modified setup, we will prove gluing theorems when gluing surfaces along circles as well as intervals, further generalizing the types of surface gluing considered in \cite{ManionRouquier,ManionDHA,ManionDHASigns}.

\smallskip

\noindent \textbf{Intervals and the positive half.} Two important features of the higher actions in \cite{ManionRouquier} are as follows:
\begin{itemize}
    \item They are actions of a categorification of
    \[
    U(\psl(1|1)^+) = \C[E]/(E^2)
    \]
    where $\psl(1|1)$ is the Lie superalgebra generated by two odd elements $E$ and $F$ with vanishing superbracket and $\psl(1|1)^+$ is its positive half, the Lie superalgebra generated by one odd element $E$ with vanishing superbracket.

\smallskip
    
    \item They are associated to intervals, not circles, in the boundaries of surfaces with corners.
\end{itemize} 
We would like to suggest that these two features are closely related in the context of TQFTs extended down to a point, with the inverval viewed as the identity cobordism from a point to itself. 

Loosely speaking, when a fully extended 3d TQFT assigns the representation category (with tensor structure) of some Hopf algebra $H$ to a point, it will then assign the representation category of the Drinfeld double $D(H)$ to a circle. The 3d Chern--Simons or Reshetikhin--Turaev TQFT for a Lie algebra $\g$ assigns a semisimplified category of representations of $U_q(\g)$ to a circle, and $U_q(\g)$ is ``almost'' the Drinfeld double of $U_q(\g^+)$ where $\g^+$ is the positive half of $\g$. However, in general $U_q(\g)$ is not actually a Drinfeld double, the modular category $\Cc$ assigned to the circle by Reshetikhin--Turaev is not actually a Drinfeld center, and Reshetikhin--Turaev theory cannot actually be described as a fully extended TQFT assigning some representation category of $U_q(\g^+)$ to a point. By contrast, the Turaev--Viro theory assigning $\Cc$ to a point should be fully extended and assign the Drinfeld center of $\Cc$ (in some sense $\Cc^2$) to a circle, and in this sense the Reshetikhin--Turaev theory for $\Cc$ is a square root of the Turaev--Viro theory for $\Cc$ at the level of 1+1+1 extended TQFTs.

Now, for the special case $\g = \psl(1|1)$, we have $U(\psl(1|1)) \cong D(U(\psl(1|1)^+))$, so (ignoring $q$) the above difficulty in extending Reshetikhin--Turaev theory to a point using the positive half $\psl(1|1)^+$ should disappear. We make the following imprecise conjecture.

\begin{conjecture}\label{conj:GeneralPtConjImprecise}
    If we take $H = U(\psl(1|1)^+) = \C[E]/(E^2)$ with $\Delta(E) = E \otimes 1 + 1 \otimes E$ and $\varepsilon(E) = 0$, then the TQFT assigning $(\Rep(H),\otimes)$ to a point recovers both:
    \begin{itemize}
        \item aspects of the $\psl(1|1)$ Chern--Simons TQFT as studied by Mikhaylov \cite{Mikhaylov} and Geer--Young \cite{GeerYoung};
        \item aspects of decategorified bordered sutured Heegaard Floer theory in dimensions $1$ and $2$ as studied in \cite{ManionDHA,ManionDHASigns}
    \end{itemize}
    which are thereby closely related to each other.
\end{conjecture}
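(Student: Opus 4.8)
Since Conjecture~\ref{conj:GeneralPtConjImprecise} is deliberately imprecise, the ``proof'' I would aim for is really a program that first makes the assertion precise and then verifies each of its two halves. \textbf{Step 1: set up the extended theory.} Because $H = \C[E]/(E^2)$ with $E$ odd is the exterior superalgebra on one odd primitive generator, it is a finite-dimensional cocommutative Hopf superalgebra and $\Rep(H)$ is a finite symmetric tensor category; I would invoke the cobordism hypothesis together with dualizability results for finite tensor categories (in the spirit of Douglas--Schommer-Pries--Snyder and Brochier--Jordan--Snyder) to produce from $(\Rep(H),\otimes)$ an oriented extended field theory in dimensions $\leq 3$, of Turaev--Viro type. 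I would then compute its low-dimensional values: on a point it is $(\Rep(H),\otimes)$ itself; on an interval it recovers the $U(\psl(1|1)^+) = \C[E]/(E^2)$-module structures of \cite{ManionRouquier} (one from each endpoint); on a circle it is the Drinfeld center $Z(\Rep(H)) \simeq \Rep(D(H))$, which by $U(\psl(1|1)) \cong D(H)$ equals $\Rep(U(\psl(1|1)))$. This last identification is the conceptual core: it says that, uniquely for $\g = \psl(1|1)$, the Reshetikhin--Turaev-type theory built from $U_q(\g)$ really is a square root of the Turaev--Viro theory built from $\g^+$, so that extending the latter to a point via $\Rep(\g^+)$ genuinely makes sense.

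\textbf{Step 2: match with $\psl(1|1)$ Chern--Simons.} Next I would compare the surface and circle invariants of the theory above with those of $\psl(1|1)$ Chern--Simons. On the Geer--Young side \cite{GeerYoung} the input is a non-semisimple relative modular category with a modified trace; I would show that the appropriately completed, $\Z$-graded modular structure on $Z(\Rep(H)) = \Rep(U(\psl(1|1)))$ reproduces their relative modular category, hence their surface invariants and mapping-class-group representations. On the Mikhaylov side \cite{Mikhaylov} the description is physical --- a sigma model / boundary vertex algebra whose characters compute Reidemeister torsion and Alexander-type invariants --- and there I would match the state spaces assigned to surfaces with the center/Hochschild computation above, using that $\C[E]/(E^2)$ is the (cohomology) algebra of a point in the relevant derived sense and that its double is the operator algebra of the associated Rozansky--Witten-type target. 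The delicate point is that $\psl(1|1)$ Chern--Simons is not a standard semisimple RT theory, so the comparison must be carried out at the level of the modified/logarithmic structures rather than the naive TQFT axioms.

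\textbf{Step 3: match with decategorified bordered sutured Heegaard Floer theory, and the main obstacle.} Finally I would identify the $1{+}1$ open-closed TQFT produced by $(\Rep(H),\otimes)$ with the one constructed in this paper (over $\F_2$, and with its partially-defined integral variants) out of the enlarged decategorified strands spaces. The gluing theorem of this paper --- gluing surfaces along circles as well as intervals --- is exactly the assertion that the paper's construction obeys the open-closed axioms; what remains is a comparison/uniqueness result pinning it down as the $\Rep(H)$-theory, which I would establish by matching generators and relations: the commuting $E$- and $F$-actions on the enlarged spaces against the two one-sided $\C[E]/(E^2)$-module structures coming from the two ends of an interval sitting inside the center, and the graded algebras assigned to circles against endomorphism algebras in $Z(\Rep(H))$. \textbf{The main obstacle is the imprecision itself:} a fully rigorous statement needs a target supporting a non-semisimple --- indeed DG/derived, $\Z$-graded, and over $\Z$ only partially defined --- $3$d extended TQFT, and the cobordism hypothesis in that generality, together with a mathematically rigorous construction of $\psl(1|1)$ Chern--Simons matching the physics, is not presently available. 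Within this paper, therefore, the realistic goal is to establish the $1{+}1$ open-closed part unconditionally over $\F_2$ (and the partial integral versions), leaving the $3$d Chern--Simons comparison as the conjectural bridge it is stated to be.
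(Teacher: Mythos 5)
This statement is a conjecture, and moreover one the author explicitly describes as ``imprecise''; the paper does not prove it, so there is no proof to compare your attempt against. What the paper does is (a) record the motivating heuristics --- $U(\psl(1|1)) \cong D(U(\psl(1|1)^+))$, the expectation that the theory assigns $\Rep(H)$ to an interval and $\Rep(D(H))$ to a circle, the Reshetikhin--Turaev-as-square-root-of-Turaev--Viro picture --- in the surrounding prose; (b) defer the actual extended-to-a-point formulation to the in-preparation reference \cite{ManionPoint}; and (c) supply concrete \emph{evidence} for the conjecture in the form of the $1{+}1$ open-closed gluing theorems (Theorems~\ref{thm:IntroMain1F2} and~\ref{thm:GeneralCompositionWithSigns}), the explicit pair-of-pants bimodule computation (Proposition~\ref{prop:Pants}), and the companion Conjecture~\ref{conj:WedgeEqualsGY} relating $\Zb^P_{\delta_{1/2},\pi_{1/2}}$ to the Geer--Young functor.

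Your program tracks the paper's heuristics closely (the Drinfeld-double/center identification, the square-root observation, the open-closed matching with the Heegaard Floer side), and your Step~3 is essentially a description of what the paper actually accomplishes. Your diagnosis of the main obstacle --- that a rigorous target for a non-semisimple, $\Z$-graded, possibly derived extended $3$d TQFT is not presently available, and that the cobordism-hypothesis machinery has not been carried out in that generality --- is exactly the reason the statement is left as an imprecise conjecture rather than a theorem. Two small cautions: the $E$- and $F$-actions for a circle in this paper \emph{anti}commute ($EF+FE=0$), not commute as your Step~3 phrasing suggests; and the degree/parity bookkeeping is nontrivial (there is no single choice of $A$ making $\delta_A$ integral for all surfaces, per Proposition~\ref{prop:NoAWorksForAllF}, which is why the $\Z$-version splits into $\cpp$ and $\opp$ sectors), so any comparison with the Geer--Young $Z$-grading must respect that subtlety. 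Given that the target is a conjecture, treating your write-up as a research program rather than a proof is the appropriate stance, and on that reading it is consistent with what the paper itself envisions.
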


Note that the TQFT assigning $(\Rep(H), \otimes)$ to a point will assign $\Rep(H)$ as a bimodule category over itself to an interval, and it will assign $\Rep(D(H))$ to a circle. For appropriate surfaces $F$ with corners, then, we should expect actions of $U(\psl(1|1)^+)$ for intervals in $\partial F$ and actions of $U(\psl(1|1))$ for circles in $\partial F$. The first type of action is what gets categorified by the higher actions of \cite{ManionRouquier}. 

\begin{proposal}\label{proposal:EBecausePoint}
    There is ``$E$ but not $F$'' in \cite{ManionRouquier} because the higher actions and tensor product of \cite{ManionRouquier}, as well as the earlier work of \cite{DM}, are more related to the Heegaard Floer homology of the point and the interval than to the Heegaard Floer homology of the circle.
\end{proposal}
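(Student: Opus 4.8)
The plan is to argue the proposal in three moves: (i) isolate the precise algebraic mechanism that forces ``only $E$, not $F$'' in \cite{ManionRouquier}; (ii) attach that mechanism to the point together with its identity cobordism, the interval, rather than to the circle; and (iii) exhibit, as the body of this paper does in the circle case, the complementary mechanism that genuinely produces both $E$ and $F$. Since a proposal of this kind is deliberately imprecise, ``proving'' it means assembling these pieces into a convincing dictionary rather than deriving a formal implication.

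First I would pin down the source of $E$. In \cite{ManionRouquier} the higher actions are actions of a categorification of $U(\psl(1|1)^+) = \C[E]/(E^2)$, so decategorified they are literally actions of the positive half, in which $F$ has no place; the real question is \emph{why} the positive half and not all of $U(\psl(1|1))$ acts. The answer I would develop is structural: the strands-algebra bimodules of \cite{ManionRouquier} and of the earlier \cite{DM} are built from the $1$-manifold combinatorics of how a surface with corners restricts to an \emph{interval} in its boundary, i.e.\ from the combinatorics of $\id_{\mathrm{pt}}$ in a bordered sutured theory extended to points. In a fully extended theory assigning $(\Rep(H),\otimes)$ to a point with $H = U(\psl(1|1)^+)$, the interval receives $\Rep(H)$ as a bimodule category over itself, which carries a single copy of the $H$-action --- hence $E$ alone. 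The dual of $E$ that appears in \cite{ManionRouquier} is then exactly the dual-Hopf-algebra generator, and (as noted in the introduction) it does \emph{not} satisfy the double's relations with $E$ precisely because those relations only emerge once one passes to the circle.

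Next I would contrast with the circle, using the heuristic recalled above: a fully extended theory with $Z(\mathrm{pt}) = (\Rep(H),\otimes)$ assigns $\Rep(D(H))$ to $S^1$, and for $H = U(\psl(1|1)^+)$ one has $D(H) \cong U(\psl(1|1))$, which contains both odd generators. So the prediction is sharp: constructions genuinely built from circles in $\partial F$, not merely from intervals, should carry an action of all of $\psl(1|1)$, that is, of both $E$ \emph{and} $F$. The main supporting evidence --- and the step I would lean on hardest --- is the rest of this paper: the enlarged vector spaces and their more elaborate bimodule structures are designed precisely so that surfaces can be glued along circles, and by the gluing theorem over $\F_2$ they assemble into an open-closed $1{+}1$ TQFT in which both $E$ and $F$ genuinely appear. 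Thus ``interval $\Rightarrow$ $E$ only'' and ``circle $\Rightarrow$ $E$ and $F$'' become two aspects of a single picture, which is exactly the content of the proposal.

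The main obstacle is that none of this is a theorem in the conventional sense. The cleanest justification would require a fully extended formulation of decategorified bordered sutured Heegaard Floer theory in which ``the Heegaard Floer homology of the point'' is a genuine object whose identity cobordism reproduces the \cite{ManionRouquier} bimodules on the nose and whose circle-value matches the constructions of this paper; I do not expect to build that here. What I would present instead is the partial dictionary above together with the internal consistency check supplied by the new circle-level constructions, and I would be explicit that the honest status of the proposal is \emph{explanatory and corroborated}, not formally proved --- with Conjecture~\ref{conj:GeneralPtConjImprecise} flagged as the precise statement whose eventual resolution would upgrade it.
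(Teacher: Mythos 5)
Your account reproduces the paper's own heuristic justification essentially verbatim: the surrounding discussion that a fully extended theory with $Z(\mathrm{pt}) = (\Rep(H),\otimes)$ assigns $\Rep(H)$ to the interval and $\Rep(D(H))$ to the circle, combined with $D(U(\psl(1|1)^+)) \cong U(\psl(1|1))$, so that the interval-based constructions of \cite{ManionRouquier} and \cite{DM} see only $E$ while the circle-level structures built in this paper carry both $E$ and $F$. You also correctly recognize that the statement is a proposal rather than a theorem, that no formal proof is offered or possible at this level of precision, and that Conjecture~\ref{conj:GeneralPtConjImprecise} is the paper's precise stand-in for it.
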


\begin{remark}
In the Acknowledgments section of \cite{DM}, Douglas--Manolescu say that their work in cornered Heegaard Floer homology, upon which \cite{ManionRouquier} builds, was inspired by a question that David Nadler asked of Manolescu: ``What is the Seiberg--Witten invariant of the circle?'' In our proposed interpretation, Douglas--Manolescu's work instead concerns the Seiberg--Witten or Heegaard Floer invariant of a point, which would be something like 
\[
(2\Rep(\mathcal{U}(\psl(1|1)^+)), \ootimes)
\]
where $\mathcal{U}(\psl(1|1)^+)$ is the dg monoidal category called $\mathcal{U}$ in \cite{ManionRouquier} and $\ootimes$ is the higher tensor product operation defined in \cite{ManionRouquier}. If our interpretation is close to accurate, it would give a very nice conceptual interpretation for the higher tensor product operation of \cite{ManionRouquier} and its relationship to Heegaard Floer homology: very roughly, we propose that
\begin{center}
\emph{The higher tensor product $\ootimes$ of \cite{ManionRouquier} is the key ingredient in the Heegaard Floer homology of a point.}
\end{center}
\end{remark}

\begin{remark}
There is another perspective from which this proposal is not entirely implausible. Heegaard Floer homology itself is defined based on the ansatz that Seiberg--Witten theory, extended down to surfaces, assigns to a surface $\Sigma$ the Fukaya category of a symmetric power of $\Sigma$. Correspondingly, when defining Heegaard Floer homology for a 3-manifold, one picks a decomposition along a 2d Heegaard surface and works on this surface (which is one lower dimension than expected for 3-manifold invariants, and this drop in dimension is because one is utilizing extended TQFT structure).

Now, both Lipshitz--Ozsv{\'a}th--Thurston's and Zarev's variants of bordered Heegaard Floer theory work by choosing an \emph{additional cut on the 3-manifold}, transverse to the Heegaard surface and intersecting the Heegaard surface in a 1-manifold. To define bordered Floer invariants of surfaces, one works with this 1-manifold (in the guise of a pointed matched circle or an arc diagram), and to define bordered Floer invariants of 3d cobordisms, one works with a Heegaard surface having 1d boundary given by the pointed matched circle or arc diagram. It is tempting to think that bordered Heegaard Floer theory is again utilizing extended TQFT structure of one lower dimension (one higher level of extension) than expected, and indeed, by \cite{ManionRouquier} the bordered Floer surface invariants are objects of a 2-representation 2-category that should be assigned to the 1-manifold underlying a pointed matched circle or arc diagram.

Finally, the tensor product of \cite{ManionRouquier} and the corresponding gluing formula for bordered Floer surface invariants was prefigured by Douglas--Manolescu's theory of cornered Heegaard Floer homology \cite{DM}, which yet again is based on making another cut on a 3-manifold (transverse to both the Heegaard surface and the above ``bordered'' cut). While Douglas--Manolescu work with Lipshitz--Ozsv{\'a}th--Thurston's variant of bordered Floer theory and have a different topological interpretation of what's going on, \cite{ManionRouquier} works with Zarev's variant and gets a gluing formula for algebras associated to 1d arc diagrams when gluing like ``interval = interval $\cup_{\mathrm{pt}}$ interval'' (for the surfaces represented by the arc diagrams, this gives the open pair-of-pants gluing involving the $p=2$ case of Example~\ref{ex:PantsIntro} below). Specifically, the algebra for the glued arc diagram is the higher tensor product of the algebras for the two pieces, which categorifies what one would expect from the TQFT assigning $(\Rep(U(\mathfrak{psl}(1|1)^+)),\otimes)$ to a point.
\end{remark}

\smallskip

\noindent \textbf{Main results.} We now state the main results of this paper, which will be phrased in terms of open-closed TQFT rather than TQFT extended down to a point. We plan to return to TQFT extended down to a point in \cite{ManionPoint} (in preparation).

In \cite{ManionRouquier, ManionDHA,ManionDHASigns} we do not have the expected actions of $U(\psl(1|1))$ for circles; we only have actions of $U(\psl(1|1)^+)$ for intervals. Indeed, while one would expect the identity cobordism $\id_{S^1}$ on $S^1$ to get assigned the identity functor on $\Rep(U(\psl(1|1)))$, which is tensor product with $U(\psl(1|1))$ as a bimodule over itself (4-dimensional), the vector space associated to this cobordism in \cite{ManionDHA,ManionDHASigns} only has dimension $2$. In this paper we will define related spaces that have the expected actions of both $U(\psl(1|1)^+)$ for intervals and $U(\psl(1|1))$ for circles.

Let $\Cob^{\ext}$ denote the $1+1$ open-closed cobordism category defined in \cite{LaudaPfeiffer}. For an object $M$ of $\Cob^{\ext}$ consisting of a disjoint union of oriented intervals and circles in some specified order, define $A_{EF}(M)$ to be the tensor product (in order) of super rings 
\[
U^{\Z}(\psl(1|1)^+) := \Z[E]/(E^2)
\]
for interval components of $M$ and 
\[
U^{\Z}(\psl(1|1)) := \Z\langle E,F \rangle / (E^2, F^2, EF + FE)
\]
for circle components of $M$,  where $\Z \langle \cdots \rangle$ denotes noncommutative polynomials. Give these super rings $\Z$-gradings by setting $\deg(E) = -1$ and $\deg(F) = 1$. 

We will first state our main result over $\F_2$; let $A^{\F_2}_{EF}(M) = A_{EF}(M) \otimes \F_2$ viewed as an ordinary non-super $\Z$-graded algebra. Let $F \colon M_1 \to M_2$ be a morphism in $\Cob^{\ext}$ and let $S_+ =M_2 \sqcup (-M_1)$. Let $S_- $ denote the closure of $\partial F \setminus S_+$. Let $P$ be a collection of points, one in each $S_+$ boundary component of $F$ (the below constructions will be independent of $P$ up to isomorphism). For any rational number\footnote{Rather than restricting to $\Q$-gradings, we could more generally choose $A \in \R$ or $A \in \C$ if we wanted. On the other hand, the choices of $A$ we will be most concerned with are $A = 1/2$ and $A=1$. For these choices the $\Q$-grading is really just a $\frac{1}{4}\Z$-grading or a $\frac{1}{2}\Z$-grading respectively.} $A \in \Q$, let
\begin{equation}\label{eq:DeltaADef}
\begin{aligned}
    \delta_{A}(F) = &-A h + (A - 1)\#\{\textrm{no-}S_+\textrm{ non-closed components of }F\} \\ 
    &+ A\#\{\textrm{no-}S_-\textrm{ non-closed components of }F\} \\
    &+ (2A-1)\#\{\textrm{closed components of }F\}\\
    &+ ((A-1)/2) \#\{S_+\textrm{ intervals}\} - (1/2)\#\{S_+\textrm{ circles}\}.
\end{aligned}
\end{equation} We will define the structure of a bimodule over $(A^{\F_2}_{EF}(M_2), A^{\F_2}_{EF}(M_1))$ on the $\Q$-graded vector space
\[
\Zb^P_{\delta_A,\F_2}(F) := \wedge^* H_1(F,P; \F_2) \{\delta_A(F)\},
\]
where $\{\cdot\}$ denotes a shift in the $\Q$-grading and the summand $\wedge^k$ of the exterior algebra lives in $\Q$-degree $k$. Note that $\wedge^* H_1(F,P)$ is higher-dimensional than the space $\wedge^* H_1(F,S_+)$ featuring in \cite{ManionDHA,ManionDHASigns} if and only if $F$ has at least one component $F_0$ with an $S_+$ boundary circle that is not the only boundary circle of $F_0$.

\begin{theorem}\label{thm:IntroMain1F2}
Let $M_1$, $M_2$, and $M_3$ be objects of $\Cob^{\ext}$ and let
\[
M_3 \xleftarrow{F'} M_2 \xleftarrow{F} M_1
\]
be morphisms in $\Cob^{\ext}$. For any $A \in \Q$, we have
\[
\Zb^P_{\delta_A,\F_2}(F' \circ F) \cong \Zb^P_{\delta_A,\F_2}(F') \otimes_{A^{\F_2}_{EF}(M_2)} \Zb^P_{\delta_A,\F_2}(F)
\]
as $\Q$-graded bimodules over $(A^{\F_2}_{EF}(M_3),A^{\F_2}_{EF}(M_1))$.
\end{theorem}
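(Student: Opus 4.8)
The plan is to identify the underlying $\Q$-graded vector space of both sides via a relative Mayer--Vietoris argument, to compute the tensor product on the right as a tensor product of exterior-algebra modules over an exterior algebra (which is transparent over $\F_2$), to match the two, and finally to check that the grading shifts packaged in $\delta_A$ balance. Write $G = F' \circ F = F \cup_{M_2} F'$. Choose one point $q_c$ in each component $c$ of $M_2$, let $Q$ be their union, and arrange the marked point sets compatibly so that $P_F = P_1 \sqcup Q$, $P_{F'} = Q \sqcup P_3$, and $P_G = P_1 \sqcup P_3$ with $P_i \subset M_i$; the stated independence of the construction under the choice of marked points allows this.

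For the topological input, the relative Mayer--Vietoris sequence of $G = F \cup_{M_2} F'$ with the subspaces $P_F$, $P_{F'}$, and $Q = P_F \cap P_{F'}$ reads, over $\F_2$,
\[
\cdots \to H_1(M_2,Q) \xrightarrow{\;\phi\;} H_1(F,P_F)\oplus H_1(F',P_{F'}) \to H_1(G, P_G\sqcup Q) \to H_0(M_2,Q) = 0 ,
\]
where $H_1(M_2,Q)$ is free on the fundamental classes of the circle components of $M_2$ and $\phi$ sends such a class to the (``diagonal'') pair of its images in $F$ and $F'$. The long exact sequence of the triple $(G, P_G\sqcup Q, P_G)$ then gives $H_1(G, P_G\sqcup Q) \cong H_1(G, P_G)\oplus\F_2^{|Q|}$, up to a correction coming from $H_0(G,P_G)$ that is nonzero exactly when $G$ acquires components with no $S_+$-boundary (closed components included). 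So $H_1(G,P_G)$ is, up to those corrections, the cokernel of $\phi$, and $\wedge^* H_1(G,P_G)$ is the exterior algebra on it.

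For the algebraic input, over $\F_2$ every factor of $A^{\F_2}_{EF}(M_2)$ is itself an exterior algebra: $\F_2[E]/(E^2) = \wedge^*(\F_2 E)$ for interval components and $\F_2[E,F]/(E^2,F^2) = \wedge^*(\F_2 E \oplus \F_2 F)$ for circle components. With the bimodule actions as they will be defined below --- roughly, each generator acting by wedging with a class of $H_1(-,P)$ attached to the corresponding component of $M_2$ (for a circle $c$: $E_c$ by its fundamental class, $F_c$ by a distinguished arc class through $q_c$) --- each of $\Zb^P_{\delta_A,\F_2}(F)$ and $\Zb^P_{\delta_A,\F_2}(F')$ is a module of the form $\wedge^* V$ over $\wedge^* W$ with $W \hookrightarrow V$ acting by wedge. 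Such a module is free, equal to $\wedge^* W \otimes_{\F_2} \wedge^*(V/W)$, so the tensor product over $A^{\F_2}_{EF}(M_2)$ is again an exterior algebra, namely on $(H_1(F',P_{F'})\oplus H_1(F,P_F))/\Delta$ with $\Delta$ the diagonal span of the $M_2$-classes (diagonal $=$ antidiagonal over $\F_2$); this is exactly $\wedge^* H_1(G,P_G)$ from the previous step. Where an $M_2$-class fails to be independent, the tensor product imposes an extra relation, mirroring either a kernel of $\phi$ or the $H_0(G,P_G)$-correction. Finally, the residual $A^{\F_2}_{EF}(M_3)$- and $A^{\F_2}_{EF}(M_1)$-actions are by wedging with classes disjoint from $M_2$, and since the Mayer--Vietoris and triple sequences are natural for $F, F' \hookrightarrow G$, the vector-space isomorphism automatically intertwines the $(A^{\F_2}_{EF}(M_3), A^{\F_2}_{EF}(M_1))$-bimodule structures; this is formal once the actions are in hand. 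On the interval factors of $M_2$ this argument reduces to the gluing-along-intervals statement of \cite{ManionDHA,ManionDHASigns}.

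The remaining task, and the one I expect to be the main obstacle, is the grading. One must check $\delta_A(F'\circ F) = \delta_A(F') + \delta_A(F) + s$, where $s$ is the internal $\Q$-shift produced by $\otimes_{A^{\F_2}_{EF}(M_2)}$, determined by the $\Q$-degrees $-1$ and $1$ of $E$ and $F$ and the convention placing $\wedge^k$ in degree $k$. Expanding \eqref{eq:DeltaADef} for $F$, $F'$, and $F'\circ F$, the $h$-terms combine via the first-Betti-number identity from Mayer--Vietoris (with its $H_0$-correction), while all the component counts --- closed components, non-closed components with no $S_+$- or no $S_-$-boundary, $S_+$-intervals and $S_+$-circles (the last two of which leave $S_+$ upon gluing) --- must be tracked through the attachment along $M_2$. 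The difficulty is not depth but a somewhat intricate case analysis: for each way the two sides of a component $c$ of $M_2$ can lie (same or different components of $F \sqcup F'$; whether the resulting $G$-component becomes closed or changes its $S_\pm$-free status), one checks that the coefficients of \eqref{eq:DeltaADef} conspire to the correct total, uniformly in $A$. The interaction between the $S_+$-circle term and the closed-component term --- when a circle gluing seals off a closed surface --- is where I anticipate the bookkeeping being most delicate.
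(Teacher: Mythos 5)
Your high-level strategy — identify the underlying vector space of both sides via a Mayer--Vietoris style argument, interpret the tensor product over $A^{\F_2}_{EF}(M_2)$ as a balanced tensor product of exterior algebras, and then reconcile the $\delta_A$ shifts — is genuinely different from the paper's approach, which computes the balanced tensor product by writing it as a quotient $\Zb^P(F'\sqcup F)/(\im(E_1+E_2) + \im(F_1+F_2))$ and then, one boundary component of $M_2$ at a time, proves interval-gluing and circle-gluing lemmas by picking explicit bases of arcs and circles and verifying the isomorphism case by case (Lemmas~\ref{lem:IntervalGluing} and \ref{lem:CircleGluing}, combined in Theorem~\ref{thm:GeneralCompositionWithSigns}).

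However, there is a concrete error that undermines the key algebraic step of your argument: the $E$-action on $\wedge^* H_1(F,P)$ is \emph{not} a wedge product. By Definition~\ref{def:EActions}, $E_X$ is the degree-lowering map obtained by contracting against the functional $\phi_X = [p_X]^*\circ\partial \colon H_1(F,P)\to\Z$, not exterior multiplication with any class, and $F_C$ (Definition~\ref{def:FActions}) is exterior multiplication with $[C]$. You describe the opposite, with $E_c$ wedging with the fundamental class and $F_c$ wedging with an arc class, so your characterization has $E$ and $F$ swapped \emph{and} gets the nature of the $E$-action wrong (wedge rather than contraction; the two differ even in degree). Because of this, the passage where you assert that each $\Zb^P_{\delta_A,\F_2}(F)$ is ``of the form $\wedge^* V$ over $\wedge^* W$ with $W\hookrightarrow V$ acting by wedge, hence free, equal to $\wedge^* W\otimes_{\F_2}\wedge^*(V/W)$'' is not justified: the module is free exactly when each component of $F$ meets $M_1$ (Proposition~\ref{prop:Projective}), and the cases where freeness fails — components of $F$ or $F'$ disjoint from $M_2$, or glued components becoming closed or losing all of $S_+$ — are precisely the cases where the Mayer--Vietoris sequence acquires nonzero $H_0$ corrections. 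Those cases are exactly where most of the delicacy lives (they occupy the bulk of the case analysis in Lemma~\ref{lem:CircleGluing}), and your plan handles them only by an unspecified ``extra relation'' remark. Finally, the claim that the residual $(A^{\F_2}_{EF}(M_3), A^{\F_2}_{EF}(M_1))$-actions intertwine ``formally by naturality'' is not automatic once the $E$-generators act by contractions, since $\phi_X$ interacts with basis changes (even over $\F_2$), and you explicitly defer the $\delta_A$ bookkeeping. In summary: the Mayer--Vietoris skeleton is plausible and could likely be fleshed out, but as written the proposal mischaracterizes the module actions, does not actually compute the balanced tensor product in the non-free cases, and leaves both the bimodule-compatibility and the grading verifications undone.
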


A version of Theorem~\ref{thm:IntroMain1F2} holds over $\Z$; see Theorem~\ref{thm:GeneralCompositionWithSigns}.

\begin{example}\label{ex:PantsIntro}
    Among the morphisms in $\Cob^{\ext}$ are the closed and open $p$-tuples of pants as shown in Figure~\ref{fig:OpenClosedPants}. In many TQFTs, such cobordisms are assigned the $p$-fold tensor product functor on the monoidal category associated to the circle or the interval respectively. In terms of bimodules, if the circle or interval is assigned a Hopf algebra $H$, then one expects the corresponding $p$-tuple of pants cobordism to be assigned $H^{\otimes p}$ as a bimodule over $(H,H^{\otimes p})$ with right action by multiplication and left action by the coproduct of $H$. In our setting, we can ask which choices of $A \in \Q$ are such that the closed or open $p$-tuple of pants cobordisms get assigned this expected bimodule with the correct grading.
    \begin{itemize}
        \item The closed $p$-tuple of pants $\mathcal{P}_{p,\mathrm{closed}}$ has $h = p$. It has no components intersecting $S_-$ but not $S_+$, one component intersecting $S_+$ but not $S_-$, and no closed components. It has no $S_+$ intervals and $p+1$ $S_+$ circles. Thus,
        \[
        \delta_A(\mathcal{P}_{p,\mathrm{closed}}) = -Ap + A - (p+1)/2
        \]
        and we want this quantity to equal $-p$. The equation 
        \[
        p(-A+1/2) + (A-1/2) = 0
        \]
        holds for all $p$ if and only if $A = 1/2$; note that for a general open-closed cobordism $F$ we have
        \begin{align*}
        \delta_{1/2}(F) =& -h/2 - (1/2)\#\{\textrm{no-}S_+\textrm{ non-closed components of }F\} \\ 
        &+ (1/2)\#\{\textrm{no-}S_-\textrm{ non-closed components of }F\} \\
        & - (1/4) \#\{S_+\textrm{ intervals}\} - (1/2)\#\{S_+\textrm{ circles}\}.
        \end{align*}

        \item The open $p$-tuple of pants $\mathcal{P}_{p,\mathrm{open}}$ also has $h = p$. It has no components intersecting $S_-$ but not $S_+$, no components intersecting $S_+$ but not $S_-$, and no closed components. It has $p+1$ $S_+$ intervals and no $S_+$ circles. Thus,
        \[
        \delta_A(\mathcal{P}_{p,\mathrm{open}}) = -Ap + ((A-1)/2)(p+1).
        \]
        and we want this quantity to equal $-p$. The equation 
        \[
        p(-A/2+1/2) + (A-1)/2 = 0
        \]
        holds for all $p$ if and only if $A = 1$; note that for a general open-closed cobordism $F$ we have
        \begin{align*}
        \delta_{1}(F) = &-h + \#\{\textrm{no-}S_-\textrm{ non-closed components of }F\}  \\ 
        & + \#\{\textrm{closed components of }F\} - (1/2)\#\{S_+\textrm{ circles}\}.\\
        \end{align*}
    \end{itemize}
    Thus, the choices $A = 1/2$ and $A = 1$ in \eqref{eq:DeltaADef} are of particular interest. The first seems most natural for circle gluing and the connection with the existing literature on 3d non-semisimple TQFTs as in \cite{ManionDHASigns} (discussed further below); the second seems most natural for the connection between higher tensor products and the type of surface gluing along intervals that appears in \cite{ManionRouquier}.
\end{example}

As a corollary of Theorem~\ref{thm:IntroMain1F2}, we get a $1+1$-dimensional open-closed TQFT valued in algebras and bimodules. Since it is valued in algebras and bimodules rather than vector spaces and linear maps, we think of it conceptually as being part of the extended structure of a $2+1$-dimensional TQFT such as the 3d non-semisimple TQFTs discussed below. The proof of the following corollary is the same as \cite[proof of Corollary 1.3]{ManionDHASigns}. 

\begin{corollary}\label{cor:OpenClosedF2}
For any $A \in \Q$, the assignments 
\[
M \mapsto A^{\F_2}_{EF}(M) \quad \textrm{ and } \quad F \mapsto \Zb^P_{\delta_A(F),\F_2}(F)
\]
give a symmetric monoidal functor from $\Cob^{\ext}$ to the symmetric monoidal category $\Alg_{\F_2}$ of $\Z$-graded algebras over $\F_2$ and $\Q$-graded bimodules up to isomorphism.
\end{corollary}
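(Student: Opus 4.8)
The plan is to assemble the symmetric monoidal functor formally from Theorem~\ref{thm:IntroMain1F2}, following the argument in \cite[proof of Corollary 1.3]{ManionDHASigns}. Recall that an object of $\Alg_{\F_2}$ is a $\Z$-graded $\F_2$-algebra, a morphism $A \to B$ is an isomorphism class of $\Q$-graded $(B,A)$-bimodules with composition given by tensoring over the middle algebra, the identity on $A$ is the regular bimodule ${}_A A_A$, and the symmetric monoidal structure is given by tensor products of algebras and of bimodules with the evident flips. The assignment $M \mapsto A^{\F_2}_{EF}(M)$ is defined directly on objects, and since $A_{EF}(M)$ is by definition the ordered tensor product over the components of $M$ of the factors $U^{\Z}(\psl(1|1)^+)$ and $U^{\Z}(\psl(1|1))$, the object part is strictly monoidal after reducing mod $2$. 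For a morphism $F \colon M_1 \to M_2$ of $\Cob^{\ext}$, the bimodule structure on $\Zb^P_{\delta_A,\F_2}(F)$ over $(A^{\F_2}_{EF}(M_2), A^{\F_2}_{EF}(M_1))$ constructed before Theorem~\ref{thm:IntroMain1F2} shows that $F \mapsto \Zb^P_{\delta_A,\F_2}(F)$ lands in $\Alg_{\F_2}$; it is well defined because morphisms of $\Cob^{\ext}$ are diffeomorphism classes rel boundary, and both $\wedge^* H_1(F, P; \F_2)$ and $\delta_A(F)$ depend only on such a class, with the dependence on $P$ only up to isomorphism as remarked before the theorem.

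It then remains to verify the functor and monoidality axioms. \emph{Composition} is exactly Theorem~\ref{thm:IntroMain1F2}, since composition of morphisms in $\Alg_{\F_2}$ is tensor product over the middle algebra. For the \emph{identity} axiom, $\id_M$ is the cylinder $M \times [0,1]$: each interval component of $M$ contributes a square whose two $S_+$ boundary arcs carry the two marked points of $P$, and each circle component contributes an annulus whose two boundary circles each carry a marked point of $P$. A direct computation of $H_1(M \times [0,1], P; \F_2)$, together with the value of $\delta_A$ on the cylinder, identifies $\Zb^P_{\delta_A,\F_2}(\id_M)$ as a $\Q$-graded vector space with the regular bimodule $A^{\F_2}_{EF}(M)$, and one checks that the bimodule structure from Theorem~\ref{thm:IntroMain1F2} is the regular one, decomposing $\wedge^*(V \oplus W) \cong \wedge^* V \otimes \wedge^* W$ over the components. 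For the \emph{monoidal} axiom, $H_1(F_1 \sqcup F_2, P_1 \sqcup P_2; \F_2) \cong H_1(F_1, P_1; \F_2) \oplus H_1(F_2, P_2; \F_2)$, so $\wedge^*$ of the left side is the $\Q$-graded tensor product of the two exterior algebras; since every term in \eqref{eq:DeltaADef} is additive under disjoint union, $\delta_A(F_1 \sqcup F_2) = \delta_A(F_1) + \delta_A(F_2)$, and the resulting isomorphism $\Zb^P_{\delta_A,\F_2}(F_1 \sqcup F_2) \cong \Zb^P_{\delta_A,\F_2}(F_1) \otimes \Zb^P_{\delta_A,\F_2}(F_2)$ is compatible with $A^{\F_2}_{EF}(M_1 \sqcup M_2) = A^{\F_2}_{EF}(M_1) \otimes A^{\F_2}_{EF}(M_2)$. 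For the \emph{symmetric} axiom, the symmetry cobordism of $\Cob^{\ext}$ permuting components is a reordered disjoint union of cylinders, hence, by the identity and monoidal cases, is sent to the flip bimodule implementing the symmetry of $\Alg_{\F_2}$; the remaining coherences follow since all isomorphisms above are natural in the cobordisms.

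I expect the only genuine work, beyond citing Theorem~\ref{thm:IntroMain1F2}, to be the identity-morphism computation: choosing an $\F_2$-basis of $H_1(M \times [0,1], P; \F_2)$ adapted to the components of $M$, evaluating $\delta_A$ on the cylinder, and confirming that the outcome matches $A^{\F_2}_{EF}(M)$ with its stated $\Z$-grading exactly, so that no residual grading shift survives and the identity bimodule is recovered on the nose. The remaining steps are formal bookkeeping, identical to \cite[proof of Corollary 1.3]{ManionDHASigns}.
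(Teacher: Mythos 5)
Your proposal is correct and follows the same approach as the paper, which discharges the corollary by citing the analogous argument in \cite[proof of Corollary 1.3]{ManionDHASigns}: Theorem~\ref{thm:IntroMain1F2} supplies functoriality under composition, and the remaining checks (identities via cylinders, monoidality via additivity of $H_1(-,P)$ and $\delta_A$ under disjoint union, and symmetry via permutation cylinders) are formal. The only small wording slip is attributing the bimodule structure to Theorem~\ref{thm:IntroMain1F2} rather than to Proposition~\ref{prop:Relations} and the constructions of Section~\ref{sec:AlgebraActions}, but this does not affect the argument.
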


\begin{figure}
    \centering
    \includegraphics[scale=0.9]{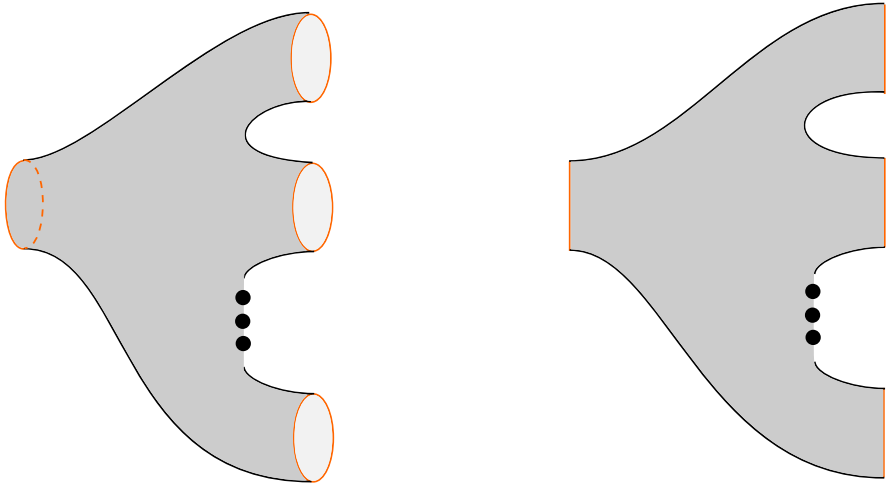}
    \caption{Left: closed $p$-tuple of pants, with $p$ circles on its right side and one on its left side. Right: open $p$-tuple of pants, with $p$ intervals on its right side and one on its left side.}
    \label{fig:OpenClosedPants}
\end{figure}

By contrast, when decategorifying the higher actions of \cite{ManionRouquier} one is led to spaces
\[
\Zb^{S_+}_{\delta,\F_2}(F) := \wedge^* H_1(F,S_+;\F_2)\{\delta(F)\}
\]
where $\delta$ is chosen from a larger parametrized family of functions $\Sc \to \Q$ described in \cite{ManionDHASigns}. If $\Cob^{\ext}_{\open}$ denotes the full subcategory of $\Cob^{\ext}$ on objects consisting only of intervals and no circles, then by \cite[Corollary 1.3]{ManionDHASigns}, the spaces $\Zb^{S_+}_{\delta,\F_2}(F)$ give a functor from $\Cob^{\ext}_{\open}$ into $\Z$-graded $\F_2$-algebras and $\Q$-graded bimodules. They seem to admit natural actions only of $U^{\F_2}(\psl(1|1)^+)$ and not of $U^{\F_2}(\psl(1|1))$. 

Let $V(0,0)^{\F_2}$ denote the two-dimensional $\Z$-graded module over $U^{\F_2}(\psl(1|1))$ given by the quotient $U^{\F_2}(\psl(1|1)) / (F)$; the notation is adapted from \cite[Section 2.3.2]{GeerYoung}. Note that $V(0,0)^{\F_2}$ is localized in $\Z$-degrees $-1$ and $0$. For $F$ with $p$ $S_+$ circles, we will show in Proposition~\ref{prop:SPlusFromP} that
\begin{equation}\label{eq:ZSPlusFromZP}
\Zb^{S_+}_{\delta_A,\F_2}(F) \cong \Zb^P_{\delta_A,\F_2}(F) \otimes_{(U^{\F_2}(\psl(1|1)))^{\otimes p}} (V(0,0)^{\F_2})^{\otimes p}.
\end{equation}
This formula suggests that the proper interpretation of $\Zb^{S_+}_{\delta_A,\F_2}(F)$ is the state space obtained from the larger state space $\Zb^P_{\delta_A,\F_2}(F)$ by labeling each $S_+$ boundary component with the representation $V(0,0)^{\F_2}$ of $U^{\F_2}(\psl(1|1))$. 

\begin{proposal}
We have the expected categorified actions of $U(\psl(1|1)^+)$ in \cite{ManionRouquier}, but not the expected categorified actions of $U(\psl(1|1))$, because the higher actions of \cite{ManionRouquier} are on categorifications of the state spaces $\Zb^{S_+}_{\delta_A,\F_2}(F)$ (with all actions of $U^{\F_2}(\psl(1|1))$ ``labeled away'') rather than on categorifications of the larger state spaces $\Zb^P_{\delta_A,\F_2}(F)$. To honestly get actions of $E$ and $F$ at the same time, one should first categorify $\Zb^P_{\delta_A,\F_2}(F)$, possibly by some adaptation or generalization of the strands algebra construction in bordered sutured Heegaard Floer homology in which the algebras have more basic idempotents whenever $\Zb^P_{\delta_A,\F_2}(F)$ is larger than $\Zb^{S_+}_{\delta_A,\F_2}(F)$.
\end{proposal}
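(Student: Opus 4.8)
Since the final statement is a Proposal, the plan is not to prove it outright but to reduce its decategorified content to results already in this paper and to be explicit about the categorified content being a genuine (hard) research problem. For the first clause — that \cite{ManionRouquier} has categorified actions of $U(\psl(1|1)^+)$ but not of $U(\psl(1|1))$ precisely because those actions live on categorifications of the $\Zb^{S_+}$-spaces rather than of the larger $\Zb^P$-spaces — I would argue as follows. By the decategorification results of \cite{ManionDHA,ManionDHASigns} (recalled above), the decategorified higher actions of \cite{ManionRouquier} are exactly the natural $U^{\F_2}(\psl(1|1)^+)$-module structures on the spaces $\Zb^{S_+}_{\delta_A,\F_2}(F)$. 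By Proposition~\ref{prop:SPlusFromP}, i.e.\ the isomorphism \eqref{eq:ZSPlusFromZP}, for $F$ with $p$ many $S_+$ circles we have $\Zb^{S_+}_{\delta_A,\F_2}(F) \cong \Zb^P_{\delta_A,\F_2}(F) \otimes_{(U^{\F_2}(\psl(1|1)))^{\otimes p}} (V(0,0)^{\F_2})^{\otimes p}$, so the $\Zb^{S_+}$-space is obtained from the $\Zb^P$-space by using the module $V(0,0)^{\F_2}$ of $U^{\F_2}(\psl(1|1))$ to label each $S_+$ circle. The one extra observation needed is that $V(0,0)^{\F_2} = U^{\F_2}(\psl(1|1))/(F)$, so that $F$ acts as zero on it; hence this labeling discards the $F$-part of each $S_+$-circle action while retaining the $E$-part, which is the sense in which \cite{ManionRouquier} records $E$ and not $F$.

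To make the mechanism concrete I would chase \eqref{eq:ZSPlusFromZP} one step further: tensoring $\Zb^P_{\delta_A,\F_2}(F)$ over the circle algebras with $(V(0,0)^{\F_2})^{\otimes p}$ is taking the quotient by the action of the $F$-generators of the $S_+$ circles, and comparing with the identification $H_1(F,S_+;\F_2) \cong H_1(F,P;\F_2)/\langle [C] : C \text{ an } S_+ \text{ circle}\rangle$ coming from the long exact sequence of the triple $(F,S_+,P)$, one sees that the generator $F$ for an $S_+$ circle $C$ must act on $\Zb^P_{\delta_A,\F_2}(F)$ by wedging with the class $[C] \in H_1(F,P;\F_2)$ — precisely the class that becomes zero in $H_1(F,S_+;\F_2)$. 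So $\Zb^P_{\delta_A,\F_2}(F)$ genuinely carries a commuting $E$-and-$F$ action for each $S_+$ circle, whereas in the $\Zb^{S_+}$ picture the $F$-operators have been quotiented away and only $E$ (the interval-type, $\psl(1|1)^+$-part) survives; this is exactly the assertion of the first clause, and it is a theorem modulo Proposition~\ref{prop:SPlusFromP}.

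The second clause — that to obtain honest simultaneous higher $E$- and $F$-actions one should first categorify $\Zb^P_{\delta_A,\F_2}(F)$ via a strands-type algebra with more basic idempotents whenever $\wedge^* H_1(F,P)$ is strictly larger than $\wedge^* H_1(F,S_+)$ — is not something I can prove here; it is the research program the paper defers (to \cite{ManionPoint} and beyond), and it is the main obstacle. The way I would attack it is to start from the bordered sutured strands algebra of an arc diagram presenting $F$ and enlarge its idempotent ring so that the basic idempotents also track the extra generators of $H_1(F,P)$ coming from $S_+$ circles — plausibly by admitting new ``occupied'' configurations on the circle parts of the boundary — and then try to lift both the known higher $E$-action and a new higher $F$-action to this enlarged algebra, checking that together they satisfy the defining relations of a categorified $U(\psl(1|1))$, that they commute in the appropriate higher sense, and that they decategorify to the bimodule structure of this paper. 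Unlike the decategorified statement, which is pure homological bookkeeping once \eqref{eq:ZSPlusFromZP} is in hand, this requires constructing genuinely new algebra, and verifying the categorified relations for a second odd generator is exactly the kind of computation that was already nontrivial for $E$ alone in \cite{ManionRouquier}; so this is where all the difficulty of the Proposal resides.
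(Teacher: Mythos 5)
You correctly treat the Proposal as interpretive rather than provable, and you ground the decategorified clause in exactly the ingredients the paper itself points to: equation~\eqref{eq:ZSPlusFromZP} from Proposition~\ref{prop:SPlusFromP}, the identification $V(0,0)^{\F_2} = U^{\F_2}(\psl(1|1))/(F)$, and the fact (built into Definition~\ref{def:FActions} and used in the proof of Proposition~\ref{prop:SPlusFromP}) that $F_C$ is wedging with $[C]$, which dies in $H_1(F,S_+)$. Your treatment of the categorification clause as an open research problem, with the strands-algebra idempotent enlargement as a plausible line of attack, likewise matches the paper's intent, so this is essentially the paper's own approach.
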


\begin{remark}
    Even if one were to categorify $\Zb^P_{\delta_A,\F_2}(F)$ with its actions of both $E$ and $F$, these actions would still satisfy $EF + FE = 0$ at the decategorified level. The relation $EF + FE = 0$ is the $\mathfrak{gl}(1|1)$ relation $EF + FE = (K - K^{-1})/(q-q^{-1})$ when acting on representations with $K$-weight $1$ or $-1$, and one would hope to have categorifications of more general $K$-weight (e.g. the vector or defining representation of $U_q(\gl(1|1))$ with $K = q$) with actions of $E$ and $F$ satisfying the appropriate relations at the decategorified level. The connection with the 3d $\psl(1|1)$ Chern--Simons TQFT \cite{Mikhaylov} and Geer--Young's $\Dc^{q,\intt}$ TQFT \cite{GeerYoung}, in which the relation $EF + FE = 0$ arises for ``critical'' or non-generically decorated surfaces while relations with $EF + FE$ nonzero arise for generically decorated surfaces, could help see how to further generalize hypothetical categorifications of $\Zb^P_{\delta_A,\F_2}(F)$ to incorporate higher actions of both $E$ and $F$ with $EF + FE$ nonzero.
\end{remark}

\begin{remark}
Higher actions of both $E$ and $F$ on bordered strands algebras, satisfying $EF + FE = 0$, also appear in work of Ellis--Petkova--V{\'e}rtesi \cite{EPV} on Petkova--V{\'e}rtesi's tangle Floer homology \cite{PetkovaVertesi}. The presence of both $E$ and $F$ in this case seems to be a consequence of special symmetries that are enjoyed by the arc diagrams underlying the strands algebras in question but not by the arc diagrams for more general strands algebras. Tian \cite{TianUQ}, repurposing the tools of bordered sutured Heegaard Floer homology in a different way, also has actions of $E$ and $F$ satisfying the $U_q(\mathfrak{gl}(1|1))$ relations on certain algebras. While neither of these instances seem closely connected to what we discuss here, it would be interesting to pursue any connections if they exist.
\end{remark}

\begin{remark}
    In this paper we work with $U(\psl(1|1))$ acting on graded vector spaces. In some situations it is equivalent to work with $U(\pgl(1|1))$ acting on ordinary vector spaces such that the extra generator ``$H_2$'' of $U(\pgl(1|1))$ acts diagonalizably; then eigenspaces for $H_2$ stand in for summands of a graded vector space in different degrees. However, here we are especially concerned with functors given by tensor product with the algebra of a circle. If $M$ is a right module and $N$ is a left module over $U(\pgl(1|1))$, then $M \otimes_{U(\pgl(1|1))} N$ tensors the $H_2$ weight-$k$ subspace of $M$ with the $H_2$ weight-$k$ subspace of $N$ for each $k$, and the result has $H_2$ weight $k$. By contrast, if $M$ is a right graded module and $N$ is a left graded module over $U(\psl(1|1))$, then $M \otimes_{U(\psl(1|1))} N$ tensors the degree-$k$ subspace of $M$ with the degree-$l$ subspace of $N$ for each $(k,l)$, and the result has degree $k+l$. This second behavior is compatible with our gluing theorems while the first is not. Thus, the perspective taken here seems to favor $\psl(1|1)$ over $\pgl(1|1)$ in this sense.
\end{remark}

\noindent \textbf{Signs and parities.} If we want to pass from $\F_2$ to $\Q$ or $\Z$, we need to take signs and parities into account as well. The exterior algebra $\wedge^* H_1(F,P;\Z)$ is naturally a super abelian group with $\wedge^k$ in parity $k$ modulo $2$; for any given $F$ we may or may not reverse this parity, and we want our choices to be compatible with gluing.

We are not aware of a way to define a parity-shift function $\pi$ from the set $\Sc$ of morphisms in $\Cob^{\ext}$, additive under disjoint union, to $\Z/2\Z$ such that both interval and circle gluing theorems are satisfied. However, interesting parity-shift functions exist when restricting attention to certain subsets of $\Sc$. We first note that if we could choose $A$ such that $\delta_A(F)$ were integral for all $F$, then we could define the parity shift using $\delta_A(F)$ modulo 2, and gluing would be compatible with parity because it is compatible with $\delta_A$. However, there is no $A$ such that $\delta_A(F)$ is always an integer (see Proposition~\ref{prop:NoAWorksForAllF}). To proceed, we ask when $\delta_A(F)$ is integral in the special cases $A = 1/2$ and $A = 1$:
\begin{itemize}
    \item $\delta_{1/2}(F)$ is an integer if and only if the number of $S_+$ intervals of $F$ is equal modulo 4 to twice the number of boundary components of $F$ intersecting $S_-$ nontrivially (see Section~\ref{sec:GradingsParities}). In this case, write
    \[
    \pi_{1/2}(F) := \delta_{1/2}(F)
    \]
    modulo 2.

    \smallskip

    \item $\delta_1(F)$ is an integer if and only if the number of $S_+$ circles of $F$ is even. In this case, write
    \[
    \pi_1(F) := \delta_1(F)
    \]
    modulo 2.
\end{itemize}
Both of the above properties are preserved under disjoint union and under gluing an $S_+$ interval to an $S_+$ interval or an $S_+$ circle to an $S_+$ circle. 

\begin{definition}
    Let $\Cob^{\ext}_{\mathrm{closed}++}$ denote the subcategory of $\Cob^{\ext}$ whose morphisms have number of $S_+$ intervals equal modulo 4 to twice their number of boundary components intersecting $S_-$ nontrivially. Let $\Cob^{\ext}_{\mathrm{open}++}$ denote the subcategory of $\Cob^{\ext}$ whose morphisms have an even number of $S_+$ circles. Note that the ``closed sector'' of $\Cob^{\ext}$ (the usual 1+1-dimensional oriented cobordism category with $S_-$ always empty) is a subcategory of $\Cob^{\ext}_{\cpp}$, while the ``open sector'' of $\Cob^{\ext}$ (the full subcategory on objects with no circles) is a subcategory of $\Cob^{\ext}_{\opp}$.
\end{definition}

For any choice of $A \in \Q$ and $\pi \in \Z/2\Z$:
\begin{itemize}
    \item If $F$ is a morphism in $\Cob^{\ext}_{\cpp}$, the $\Q$-graded super abelian group
    \[
    \Zb^P_{\delta_A,\pi_{1/2}}(F) := \left( \Z^{0|1} \right)^{\otimes \pi_{1/2}(F)} \otimes \wedge^* H_1(F,P) \{\delta_A(F)\}
    \]
    canonically has the structure of a bimodule over $(A_{EF}(M_2), A_{EF}(M_1))$.
    \item If $F$ is a morphism in $\Cob^{\ext}_{\opp}$, the $\Q$-graded super abelian group
    \[
    \Zb^P_{\delta_A,\pi_{1}}(F) := \left( \Z^{0|1} \right)^{\otimes \pi_{1}(F)} \otimes \wedge^* H_1(F,P) \{\delta_A(F)\}
    \]
    canonically has the structure of a bimodule over $(A_{EF}(M_2), A_{EF}(M_1))$.
\end{itemize} 

Theorem~\ref{thm:GeneralCompositionWithSigns} (the $\Z$ version of Theorem~\ref{thm:IntroMain1F2}) gives us the following corollary.

\begin{corollary}
    Let $\SAlg_{\Z}$ denote the symmetric monoidal category of $\Z$-graded super rings and $\Q$-graded bimodules up to isomorphism.
\begin{itemize}
    \item For any $A \in \Q$, the assignments
    \[
    M \mapsto A_{EF}(M) \quad \textrm{ and } \quad F \mapsto \Zb^P_{\delta_A, \pi_{1/2}}(F)
    \]
    give a symmetric monoidal functor from $\Cob^{\ext}_{\cpp}$ to $\SAlg_{\Z}$.

    \item For any $A \in \Q$, the assignments
    \[
    M \mapsto A_{EF}(M) \quad \textrm{ and } \quad F \mapsto \Zb^P_{\delta_A, \pi_1}(F)
    \]
    give a symmetric monoidal functor from $\Cob^{\ext}_{\opp}$ to $\SAlg_{\Z}$.
\end{itemize}
\end{corollary}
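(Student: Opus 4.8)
The plan is to deduce both statements formally from Theorem~\ref{thm:GeneralCompositionWithSigns}, by essentially the same argument used for Corollary~\ref{cor:OpenClosedF2} (which in turn follows \cite[proof of Corollary~1.3]{ManionDHASigns}), now keeping track of signs and parities via Theorem~\ref{thm:GeneralCompositionWithSigns} in place of Theorem~\ref{thm:IntroMain1F2}. The two bullets are parallel, so I would spell out only the $\cpp$ case, the $\opp$ case being identical with $\Cob^{\ext}_{\cpp},\delta_{1/2},\pi_{1/2}$ replaced by $\Cob^{\ext}_{\opp},\delta_1,\pi_1$. Theorem~\ref{thm:GeneralCompositionWithSigns} already supplies, for composable $M_3 \xleftarrow{F'} M_2 \xleftarrow{F} M_1$ in $\Cob^{\ext}_{\cpp}$, a $\Q$-graded super bimodule isomorphism $\Zb^P_{\delta_A,\pi_{1/2}}(F'\circ F)\cong \Zb^P_{\delta_A,\pi_{1/2}}(F')\otimes_{A_{EF}(M_2)}\Zb^P_{\delta_A,\pi_{1/2}}(F)$ over $(A_{EF}(M_3),A_{EF}(M_1))$ — in particular the parities already match, so no sign issue is left open at this stage. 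First I would record that $\Cob^{\ext}_{\cpp}$ is genuinely a symmetric monoidal subcategory: it contains every identity cobordism by a trivial check of the defining congruence, and it is closed under composition and under disjoint union because, as noted just before its definition, the condition ``number of $S_+$ intervals $\equiv$ twice the number of boundary components meeting $S_-$ modulo $4$'' is preserved under disjoint union and under the $S_+$-interval and $S_+$-circle gluings that implement composition in $\Cob^{\ext}$. I would also note that $F\mapsto \Zb^P_{\delta_A,\pi_{1/2}}(F)$ lands in the morphisms of $\SAlg_\Z$: the group $\wedge^* H_1(F,P)$ together with the shifts $\delta_A(F)$ and $\pi_{1/2}(F)$ depends on $F$ only through invariants of the equivalence class defining a morphism in $\Cob^{\ext}$, and the construction is independent of the auxiliary set $P$ up to isomorphism (as stated in the text), so it determines a well-defined isomorphism class of $\Q$-graded super bimodules.

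Next I would check the functor axioms. Composition is respected by Theorem~\ref{thm:GeneralCompositionWithSigns} together with the associativity up to isomorphism of the tensor product of bimodules — the only extra input needed once morphisms in $\SAlg_\Z$ are isomorphism classes. The identity axiom is the direct verification, component by component, that $\Zb^P_{\delta_A,\pi_{1/2}}(\id_M)\cong A_{EF}(M)$ as a $\Z$-graded super bimodule over itself: on each interval component the cylinder is a rectangle and on each circle component an annulus, one computes its homology relative to the chosen $S_+$ points, and one checks that the grading and parity shifts built into $\Zb^P_{\delta_A,\pi_{1/2}}$ turn the resulting exterior algebra into $U^{\Z}(\psl(1|1)^+)$ or $U^{\Z}(\psl(1|1))$ with its regular bimodule structure and the $\Z$-grading fixed in the introduction; this is the same computation that underlies \cite[Corollary~1.3]{ManionDHASigns}. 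For the symmetric monoidal structure I would use that $H_1(F\sqcup F',P\sqcup P')\cong H_1(F,P)\oplus H_1(F',P')$, that $\delta_A$ and $\pi_{1/2}$ are additive under disjoint union (immediate from \eqref{eq:DeltaADef}), that $\wedge^*(V\oplus W)\cong \wedge^* V\otimes \wedge^* W$ naturally as super abelian groups, and that $A_{EF}(M\sqcup M')=A_{EF}(M)\otimes A_{EF}(M')$ by definition; together these give a canonical isomorphism $\Zb^P_{\delta_A,\pi_{1/2}}(F\sqcup F')\cong \Zb^P_{\delta_A,\pi_{1/2}}(F)\otimes \Zb^P_{\delta_A,\pi_{1/2}}(F')$ of bimodules, with $\emptyset\mapsto\Z$. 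The associator, unitors, and symmetry constraints are then induced by the canonical coherence isomorphisms for tensor products of super abelian groups — the symmetry carrying exactly the Koszul sign that the $\wedge^k$-in-parity-$k$ grading and the $\Z^{0|1}$ factors are there to absorb — and since every structure map involved is canonical, the pentagon, hexagon, and unit diagrams commute automatically.

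I do not expect a serious obstacle: the substantive content — gluing with the correct $\Q$-grading and the correct signs — is precisely Theorem~\ref{thm:GeneralCompositionWithSigns}, and what remains is the formal bookkeeping familiar from \cite[Corollary~1.3]{ManionDHASigns}. If anything here is delicate, it is again the sign and parity bookkeeping: confirming that $\pi_{1/2}$ and $\pi_1$ are well-defined on their respective subcategories (i.e.\ that $\delta_{1/2}$ and $\delta_1$ are integral there) and additive under both disjoint union and composition, and that the auxiliary $\Z^{0|1}$ tensor factors interact correctly with the bimodule action maps and with the gluing isomorphism — but all of this is already part of the setup of Theorem~\ref{thm:GeneralCompositionWithSigns}, so the corollary should go through cleanly.
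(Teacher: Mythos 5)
Your proposal is correct and is essentially the paper's own proof: the paper simply states that the argument is the same as \cite[proof of Corollary 1.3]{ManionDHASigns}, and what you have written out — deducing composition from Theorem~\ref{thm:GeneralCompositionWithSigns}, checking that $\Cob^{\ext}_{\cpp}$ and $\Cob^{\ext}_{\opp}$ are closed under composition and disjoint union, verifying the identity cobordism gives the regular bimodule, and using additivity of $\delta_A$ and $\pi_j$ under $\sqcup$ for the monoidal structure — is exactly what that cited proof does, adapted to the present setting with both $E$ and $F$.
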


The proof is the same as \cite[proof of Corollary 1.3]{ManionDHASigns}.
    
\smallskip

\noindent \textbf{Non-semisimple TQFT.} Recently there has been considerable interest in non-semisimple analogues of 3d Witten--Reshetikhin--Turaev TQFTs (see e.g. \cite{ADO, CGPM, Mikhaylov, BCGPM, GPV, AGPS, GPPV, GHNPPS,  CGP-NSS, Jagadale, GeerYoung}). In particular, some of these constructions \cite{Mikhaylov,BCGPM,AGPS,GeerYoung} arise from the quantum representation theory of the Lie superalgebra $\mathfrak{gl}(1|1)$ or its relatives and have connections with the Alexander polynomial and Reidemeister torsion. A general mechanism for defining non-semisimple 3d TQFTs is given by De Renzi \cite{DeRenzi}, who uses a universal construction to define a type of 1+1+1 extended TQFT starting with data that he calls a relative modular category $\Cc$. In particular, $\Cc$ comes with a decomposition $\Cc = \oplus_{g \in G} \, \Cc_g$ where $G$ is some abelian group, and the data also includes another abelian group $Z$ such that the state spaces of the theory on (decorated) surfaces are $Z$-graded vector spaces. 

In \cite[Theorem 2.23 with $\hbar = \pi i / 2$ and $q = i$]{GeerYoung} Geer and Young define a $\C$-linear\footnote{When discussing non-semisimple TQFT we will work over $\C$ rather than $\Z$.} relative modular category $\Dc^{q,\intt}$ with $G = \C/((2\pi i / \hbar) \Z) = \C/(4\Z)$ and $Z = \Z \times \Z \times \Z/2\Z$. They propose that the TQFT associated to $\Dc^{q,\intt}$ by De Renzi's construction \cite{DeRenzi} is a (homologically truncated, non-derived) mathematical realization of the main subject of Mikhaylov's paper \cite{Mikhaylov}, referred to by Mikhaylov as the $\mathfrak{psl}(1|1)$ Chern-Simons TQFT. We will call the non-extended version of this TQFT $\Zb^{GY}$ and the extended version $\Zb^{GY}_{\ext}$. 

To a disjoint union of $p$ circles each decorated by $0 \in G$, $\Zb^{GY}_{\ext}$ assigns a category enriched in $Z$-graded vector spaces, which can be viewed as the idempotent completion of the $p$-fold ordinary tensor power (as in \cite[Section 1.4]{KellyEnriched}) of the $Z$-graded category of $Z$-graded projective modules over $U(\psl(1|1))$. The $p$-fold tensor product
\[
P(0,0)_{\overline{0}} \otimes \cdots \otimes P(0,0)_{\overline{0}}
\]
naturally gives an object of the category, where $P(0,0)_{\overline{0}}$ denotes $U(\psl(1|1))$ with first component of the $Z$-grading identically zero, second component of the $Z$-grading given by the usual grading on $U(\psl(1|1))$ as a $\Z$-graded module over itself with $\deg(E) = -1$ and $\deg(F) = 1$, and third component of the $Z$-grading given by the parity on the super vector space $U(\psl(1|1))$. See \cite[Section 2.3.3]{GeerYoung}.

Now, let $F$ be a zero-decorated 2d cobordism from $M_1$ (consisting of $p_1$ zero-decorated circles) to $M_2$ (consisting of $p_2$ zero-decorated circles), and assume that each component of $F$ intersects $M_1$ nontrivially, which (given these decorations) amounts to the admissibility condition defined in \cite[Section 2.3]{DeRenzi} for $F$ as a cobordism from $M_1$ to $M_2$. The TQFT $\Zb^{GY}_{\ext}$ assigns to $F$ a functor between the above enriched categories, and we can evaluate this functor on the object corresponding to $\left( P(0,0)_{\overline{0}} \right)^{\otimes p_1}$. We get an object of the category associated to $M_2$, and this category admits a canonical functor to the category of $Z$-graded projective modules over $\left( U(\psl(1|1)) \right)^{\otimes p_2}$. Applying this additional functor, we get a $Z$-graded super vector space $\Zb^{GY}_{\ext}(F)$ (slightly abusing notation) with a left action of 
\[
\left( U(\psl(1|1)) \right)^{\otimes p_2} = A_{EF}(M_2),
\]
projective as a left module. Furthermore, by applying the same sequence of functors to morphisms from $\left( P(0,0)_{\overline{0}} \right)^{\otimes p_1}$ to itself, $\Zb^{GY}_{\ext}(F)$ also has a right action of 
\[
\left( U(\psl(1|1)) \right)^{\otimes p_1} = A_{EF}(M_1).
\] 
Overall, there is a bimodule structure over $(A_{EF}(M_2), A_{EF}(M_1))$, projective as a left module, on $\Zb^{GY}_{\ext}(F)$. We can also discard the first component of the grading by $Z = \Z \times \Z \times \Z/2\Z$, which will always be zero in this setting, and the third component which is captured by the super vector space structure. Thus, we will view $\Zb^{GY}_{\ext}(F)$ as a $\Z$-graded bimodule.

It turns out (see Proposition~\ref{prop:Projective}) that the spaces $\Zb^P_{\delta,\pi}(F)$ are projective as left modules over $A_{EF}(M_2)$ if and only if each component of $F$ intersects the incoming boundary $M_1$, i.e. exactly when De Renzi's admissibility condition is satisfied. The surfaces $F$ in question are morphisms in $\Cob^{\ext}_{\cpp}$, so they have $\delta_{1/2}(F) \in \Z \subset \Q$. For such $F$ we make the following conjecture.

\begin{conjecture}\label{conj:WedgeEqualsGY}
    If each component of $F$ intersects $M_1$, then as $\Z$-graded bimodules over $(A_{EF}(M_2),A_{EF}(M_1))$ we have
    \[
    \Zb^{GY}_{\ext}(F) \cong \Zb^P_{\delta_{1/2},\pi_{1/2}}(F).
    \]
\end{conjecture}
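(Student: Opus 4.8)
The plan is to present both $F \mapsto \Zb^{GY}_{\ext}(F)$ and $F \mapsto \Zb^P_{\delta_{1/2},\pi_{1/2}}(F)$ as symmetric monoidal functors on the category of admissible zero-decorated $2$d cobordisms (the subcategory of the closed sector of $\Cob^{\ext}$ whose morphisms have every component meeting the incoming boundary), valued in $\Z$-graded super rings and $\Z$-graded bimodules up to isomorphism, and then to produce a monoidal natural isomorphism between them. On the $\Zb^P$ side, functoriality and monoidality are the restriction of Theorem~\ref{thm:GeneralCompositionWithSigns}; on this subcategory $\delta_{1/2}(F)$ is an integer (the closed sector lies in $\Cob^{\ext}_{\cpp}$), so $\pi_{1/2}$ is defined and the bimodules are honestly $\Z$-graded, and left-projectivity over $A_{EF}(M_2)$ holds by Proposition~\ref{prop:Projective}. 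On the $\Zb^{GY}_{\ext}$ side, functoriality and monoidality follow from the extended TQFT axioms for De Renzi's universal construction applied to $\Dc^{q,\intt}$, once one checks that gluing admissible cobordisms along circles stays admissible and that the resulting gluing pairings are non-degenerate in the zero-decorated sector, so that the bimodule refinement described just before the conjecture is well defined and compatible with composition.

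With both sides organized this way it suffices to compare them on a generating set and then to verify compatibility with a complete set of relations. Since an admissible cobordism never needs the cap $\emptyset \to S^1$, the relevant generators should be $\id_{S^1}$, the symmetry $\tau$ of $S^1 \sqcup S^1$, the copair of pants $m\colon S^1\sqcup S^1 \to S^1$, the pair of pants $m^\vee\colon S^1 \to S^1\sqcup S^1$, and the cup $c\colon S^1 \to \emptyset$ (a disk); one step is to check that every connected admissible cobordism is realized by these via a handle decomposition relative to its incoming boundary that stays admissible throughout, and to pin down a complete list of relations for the resulting sub-prop (associativity, coassociativity, counitality of $c$ for $m^\vee$, the Frobenius relation, (co)commutativity via $\tau$, and whatever further genus/handle relations replace the missing unit axioms).

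On $\id_{S^1}$ both functors should give $U^{\Z}(\psl(1|1)) = A_{EF}(S^1)$ as the tautological bimodule over itself; on $m$ and $m^\vee$ both should give the bimodules built respectively from the multiplication and from the comultiplication of the super-commutative, super-cocommutative Frobenius Hopf algebra $U^{\Z}(\psl(1|1)) \cong \wedge^*(\Z E\oplus \Z F)$; and on $c$ both should give the rank-one module cut out by the Frobenius counit of this exterior algebra, on which $E$ and $F$ act by zero. On the $\Zb^P$ side these identifications come from the explicit description of the bimodule structure on $\wedge^* H_1(F,P)$ developed in the body of the paper; on the $\Zb^{GY}_{\ext}$ side they come from Geer--Young's identification of the Hopf structure maps of $U(\psl(1|1))$ with the cobordism structure maps in $\Dc^{q,\intt}$, together with a careful analysis of the cup, where the modified trace enters. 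Matching the $\Z$-grading amounts to checking that the universal construction converts the genus-and-boundary data of $F$ into exactly the shift $\delta_{1/2}(F)$ and the parity $\pi_{1/2}(F)$; because $\delta_{1/2}$ and $\pi_{1/2}$ are additive under disjoint union and under gluing, this too reduces to their (small) values on the generators. Finally, the relations above translate into bimodule isomorphisms valid for any super-commutative super-cocommutative Frobenius Hopf algebra, and so hold on both sides provided the generator-level isomorphisms are compatible with the Koszul signs introduced by $\pi_{1/2}$.

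The main obstacle is the explicit Geer--Young-side computation: extracting the bimodules $\Zb^{GY}_{\ext}(m)$, $\Zb^{GY}_{\ext}(m^\vee)$, and $\Zb^{GY}_{\ext}(c)$ with their $Z$-gradings from De Renzi's universal construction for $\Dc^{q,\intt}$ — in particular the effect of the modified trace on the cup and the bookkeeping that makes the shift $\delta_{1/2}(F)$ appear — together with establishing the presentation of the admissible cobordism category and checking that the sign-and-parity conventions of $\pi_{1/2}$ are precisely what reconcile the relations on both sides. A potential shortcut, if it can be read off from \cite{Mikhaylov,GeerYoung}, is a direct computation that the Geer--Young state space of a zero-decorated surface is $\wedge^* H_1$ with the expected grading; this would reduce the conjecture to matching the bimodule structures, for which one could try to build the comparison isomorphism from the geometry of $H_1(F,P)$ directly.
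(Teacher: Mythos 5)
This statement is left as an open conjecture in the paper — there is no proof to compare your attempt against. The paper's supporting evidence is limited to (i) Proposition~\ref{prop:Pants}, which verifies the expected bimodule structure on the closed $p$-tuple of pants and hence that $\Zb^P_{\delta_{1/2},\pi_{1/2}}$ implements the $p$-fold tensor product functor in the sense of \cite[Proposition 7.3]{DeRenzi}, and (ii) consistency of the grading and parity shifts with De Renzi's discussion of graded dimensions in \cite[Section 7.5]{DeRenzi}. So the honest assessment is that your text is a strategy outline rather than a proof, and you say as much yourself in the final paragraph.

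Within that outline there are two substantive gaps that would need to be closed before this becomes a proof. First, the generators-and-relations reduction rests on an unproved presentation: you would need to show that the admissible zero-decorated subcategory of the closed sector is generated under composition and $\sqcup$ by $\id_{S^1}$, $\tau$, $m$, $m^\vee$, and the cup $c$, via decompositions all of whose intermediate pieces remain admissible, and to identify a complete set of relations for this sub-prop (the standard commutative Frobenius presentation of $\Cob$ involves the cap $u\colon \varnothing \to S^1$, which is inadmissible, so neither the generators nor the relations are the off-the-shelf ones). Second, and more fundamentally, the entire Geer--Young side — computing $\Zb^{GY}_{\ext}$ on each generator with its $Z$-grading, pinning down the effect of the modified trace on $c$, and showing that the universal construction produces exactly the shifts $\delta_{1/2}$ and $\pi_{1/2}$ — is asserted as ``the main obstacle'' and not carried out. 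Since that is precisely the content of the conjecture, what you have is a plausible plan of attack, not a demonstration; until those computations are done there is no way to know whether the expected isomorphisms on generators actually hold with the correct signs and gradings.
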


Note that Conjecture~\ref{conj:WedgeEqualsGY} uses the same grading and parity shifts as does \cite[Conjecture 1.6]{ManionDHASigns} about $\Zb^{S_+}$. For genus zero connected surfaces $F$, De Renzi discusses the graded dimensions of state spaces in \cite[Section 7.5]{DeRenzi}, and our degree and parity shifts for $\Zb^P_{\delta_{1/2},\pi_{1/2}}(F)$ are compatible with this discussion. Going beyond the graded dimensions and looking at the bimodule structure, in Example~\ref{prop:Pants} we will show that for the closed $p$-tuple of pants cobordism $F$ from $(S^1)^{\sqcup p}$ to $S^1$, the bimodule $\Zb^P_{\delta_{1/2},\pi_{1/2}}(F)$ can be identified with $(U(\psl(1|1)))^{\otimes p}$ with right action of $(U(\psl(1|1)))^{\otimes p}$ by multiplication and left action of $U(\psl(1|1))$ induced by the coproduct $\Delta(E) = E \otimes 1 + 1 \otimes E$ and $\Delta(F) = F \otimes 1 + 1 \otimes E$ on $U(\psl(1|1))$. It follows that the functor $(\Zb^P_{\delta_{1/2},\pi_{1/2}}(F) \otimes_{(U(\psl(1|1)))^{\otimes p}} -)$ gives the $p$-fold tensor product of representations of $U(\psl(1|1))$, in line with \cite[Proposition 7.3]{DeRenzi} for $\Zb^{GY}_{\ext}(F)$.

\begin{remark}
While the TQFT $\Zb^{GY}$ is functorial on decorated cobordisms as in \cite{GeerYoung} (roughly: 3d cobordisms without corners, equipped with colored ribbon graphs), $\wedge^* H_1(F,S_+)$ is a decategorification of Zarev's bordered sutured Heegaard Floer invariants of surfaces and thus should be functorial under certain ``sutured cobordisms'' (roughly: 3d cobordisms with corners). While Geer--Young define their TQFT using the universal construction applied to decorated cobordisms, bordered sutured Heegaard Floer homology and its decategorification can also be seen as arising from a universal construction applied to sutured cobordisms (``Zarev caps''), by the ideas of \cite{ZarevJoiningGluing}.

It seems likely that a subset of decorated cobordisms can be identified with a subset of sutured cobordisms; the decorated cobordisms allow more general colorings while the sutured cobordisms allow more general topology. It would be interesting to make this identification and extend it to define a type of cobordism jointly generalizing decorated cobordisms and sutured cobordisms (possibly: sutured cobordisms equipped with some more general type of coloring data). Then one would hope that $\Zb^{GY}$ is functorial under these more general cobordisms and that a theory can be defined extending $\Zb^{GY}_{\ext}$ to surfaces with corners and sutured cobordisms, and extending decategorified bordered sutured Heegaard Floer theory from colors related to $\mathfrak{psl}(1|1)$ to more general colors related to Geer--Young's $U^E_q(\mathfrak{gl}(1|1))$ (see the discussion in \cite[Section 6.1]{GeerYoung}). 
\end{remark} 

\begin{remark}
The results of this paper were also motivated in a different direction by open questions in bordered Heegaard Floer homology. Mikhaylov's $\mathfrak{psl}(1|1)$ Chern-Simons theory is meant to recover the Turaev torsion for closed 3-manifolds; in turn, this torsion is categorified by the sophisticated $HF^+$ and $HF^-$ versions of Heegaard Floer homology, while to this point bordered Heegaard Floer homology has been largely limited to the setting of the simpler version $\widehat{HF}$. For example, one cannot recover the interesting Ozsv{\'a}th--Szab{\'o} mixed invariants for smooth 4-manifolds from $\widehat{HF}$; one needs $HF^+$ and $HF^-$. For genus-zero surfaces there is some work that goes beyond the $\widehat{HF}$ setting (e.g. Ozsv{\'a}th--Szab{\'o}'s bordered HFK) and in genus one Lipshitz--Ozsv{\'a}th--Thurston have given talks on a bordered version \cite{LOTMinus} of $HF^-$ that has not yet appeared in the literature (the relevant algebra is introduced in \cite{LOTMinusAlg}). In general, though, it is a major open problem to extend bordered Heegaard Floer techniques so that $HF^+$ and $HF^-$ can be recovered by cutting 3-manifolds along surfaces. One could hope to approach this problem by thoroughly understanding the decategorified level first, and in particular understanding a suitable TQFT approach to the Turaev torsion such as $\mathfrak{psl}(1|1)$ Chern--Simons theory, then trying to categorify everything and recover $HF^+$ and $HF^-$ by combining methods from bordered Heegaard Floer homology and 3d non-semisimple TQFTs. The current paper attempts to take a step toward this goal.
\end{remark}

\smallskip

\noindent \textbf{Organization.} In Section~\ref{sec:AlgebraActions} we review some preliminary definitions, define algebra actions on the spaces $\wedge^* H_1(F,P)$ for both interval and circle components of $S_+$, discuss when these actions give projective modules, and relate $\wedge^* H_1(F,S_+)$ to $\wedge^* H_1(F,P)$. In Section~\ref{sec:GluingTheorem} we prove our main results, and in Section~\ref{sec:GradingsParities} we explain our choices of degree and parity shifts in more detail.

\smallskip

\noindent \textbf{Acknowledgments.} The author would like to thank Bojko Bakalov, Sergei Gukov, Corey Jones, Rapha{\"e}l Rouquier, and Matthew Young for helpful conversations. The author was partially supported by NSF grant number DMS-2151786.

\section{Algebra actions for intervals and circles}\label{sec:AlgebraActions}

\subsection{Sutured surfaces and open-closed cobordisms}

We recall the definition we will use for sutured surfaces following the presentation in \cite{ManionDHASigns}.

\begin{definition}[cf. Definition 1.2 of \cite{Zarev}, Definition 2.1 of \cite{ManionDHASigns}]
A sutured surface consists of the data $(F,\Lambda,S_+,S_-,\ell)$ where:
\begin{itemize}
\item $F$ is a compact oriented surface, possibly with boundary ($F$ can be disconnected and is allowed to have closed components);
\item $\Lambda$ is a choice of some even number of points (possibly none) in each boundary component of $F$;
\item The components of $\partial F \setminus \Lambda$ are labeled as being in either $S_+$ or $S_-$, in alternating fashion across the points of $\Lambda$. Components of $\partial F$ with no points of $\Lambda$ are either $S_+$ circles or $S_-$ circles; the rest of the components of $S_+$ and $S_-$ are closed intervals.
\item $\ell$ consists of a labeling of each component of $S_+$ as ``incoming'' or ``outgoing,'' as well as an ordering on the set of incoming $S_+$ components and an ordering on the set of outgoing $S_+$ components.
\end{itemize}
\end{definition}

We will usually refer to a sutured surface as $F$ and suppress mention of the rest of the data. We can view a sutured surface $F$ as a morphism in the 1+1-dimensional open-closed cobordism category $\Cob^{\ext}$ defined by Lauda--Pfeiffer in \cite{LaudaPfeiffer}. The source of this morphism is the incoming part $M_1$ of $S_+$ (after orientation reversal) and the target is the outgoing part $M_2$, so that $S_+ = M_2 \sqcup (-M_1)$. The non-gluing boundary of $F$ is $S_-$. Sutured surfaces corresponding to a pair of composable morphisms in $\Cob^{\ext}$ are shown in \cite[Figure 2]{ManionDHASigns}.

\subsection{Actions on larger state spaces}

Choose a finite subset $P$ of $S_+$ consisting of one point in each component (interval or circle) of $S_+$. For any $\delta \in \Q$ and $\pi \in \Z/2\Z$ we will define superalgebra actions on the super abelian group
\[
\Zb^P_{\delta,\pi}(F) := (\Z^{0|1})^{\otimes \pi(F)} \otimes \wedge^* H_1(F,P) \{\delta(F)\}.
\]
Specifically, we will have actions of $\Z[E]/(E^2)$ for interval components of $S_+$ and actions of $\Z\langle E,F \rangle/(E^2,F^2,EF+FE)$ for circle components of $S_+$. The actions will be left actions for outgoing components of $S_+$ and right actions for incoming components of $S_+$. Write $\varepsilon_F$ for the standard basis element of $(\Z^{0|1})^{\otimes \pi(F)}$.

\begin{definition}[cf. Definition 2.4 of \cite{ManionDHASigns}]\label{def:EActions}
Let $(F,\Lambda,S_+,S_-,\ell)$ be a sutured surface and let $X$ be an outgoing interval or circle component of $S_+$. Choose a finite set of points $P$ as above. For any $\delta \in \Q$ and $\pi \in \Z/2\Z$, we define
\[
E = E_I \colon \Zb^P_{\delta,\pi}(F) \to \Zb^P_{\delta,\pi}(F)
\]
as follows.
\begin{itemize}
\item Define $\phi_X \colon H_1(F,P) \to \Z$ to be the composition
\[
\phi_X := H_1(F,P) \xrightarrow{\partial} H_0(P) \xrightarrow{[p_X]^* \cdot -} \Z
\]
where $[p_X]^*$ is the class in $H^0(P)$ dual to the homology class in $H_0(P)$ of the unique point $p_X$ of $P$ contained in $X$.

\item For $k \geq 1$, define $\Phi_X \colon T^k H_1(F,P) \to T^{k-1} H_1(F,P)$ by
\begin{align*}
\Phi_X &:= \sum_{i=1}^k \bigg( T^{i-1}H_1(F,P) \otimes_{\Z} H_1(F,P) \otimes_{\Z} T^{k-i} H_1(F,P) \\
& \xrightarrow{(-1)^{i-1} \id_{T^{i-1}H_1(F,P)} \otimes \phi_X \otimes \id_{T^{i-1}H_1(F,P)}} T^{i-1}H_1(F,P) \otimes_{\Z} \Z \otimes_{\Z} T^{k-i} H_1(F,P). \bigg)
\end{align*}

\item Because of the sign $(-1)^{i-1}$ in the above definition, we get an induced map
\[
\overline{\Phi_X} \colon \wedge^k H_1(F,P) \to \wedge^{k-1} H_1(F,P). 
\]
Define
\[
\overline{\Phi_X}' \colon (\Z^{0|1})^{\otimes \pi(F)} \otimes \wedge^k H_1(F,P) \{\delta(F)\} \to (\Z^{0|1})^{\otimes \pi(F)} \otimes \wedge^{k-1} H_1(F,P) \{\delta(F)\}
\]
by
\[
\overline{\Phi_X}'(\varepsilon_F \otimes \omega) := (-1)^{\pi(F)} \varepsilon_F \otimes \overline{\Phi_X}(\omega).
\]

\item Let $E$ be the sum of the maps $\overline{\Phi_X}'$ over all $k \geq 1$. $E$ is an odd map and the sign $(-1)^{i-1}$ in the definition of $\Phi_X$ ensures that $E^2 = 0$.
\end{itemize}
Define $E$ similarly when $X$ is an incoming interval or circle component of $S_+$, except that:
\begin{itemize}
\item Instead of $(-1)^{i-1}$ we have $(-1)^{k-i}$ as the sign in the definition of $\Phi_X$;
\item We define $\overline{\Phi_X}'$ by
\[
\overline{\Phi_X}'(\varepsilon_F \otimes \omega) := \varepsilon_F \otimes \overline{\Phi_X}(\omega),
\]
without a sign of $(-1)^{\pi(F)}$.
\end{itemize}
\end{definition}

\begin{remark}
    In \cite{ManionDHA,ManionDHASigns}, we could have defined actions of $\Z[E]/(E^2)$ on $\Zb^{S_+}_{\delta,\pi}(F)$ for circle components of $S_+$ like we do in Definition~\ref{def:EActions}. However, these actions do not seem very motivated; if we think of $\Zb^{S_+}_{\delta,\pi}(F)$ as being obtained from $\Zb^{P}_{\delta,\pi}(F)$ as in Proposition~\ref{prop:SPlusFromP}, then by labeling $S_+$ circles we should be ``using up'' the algebra action on each $S_+$ circle without leaving any residual action. Correspondingly, $\Zb^{S_+}_{\delta,\pi}(F)$ does not appear to admit a gluing theorem when gluing along circles.
\end{remark}

The following definition is where we finally get $F$ endomorphisms to go along with the $E$ endomorphisms for circles.

\begin{definition}\label{def:FActions}
Let $(F,\Lambda,S_+,S_-,\ell)$ be a sutured surface and let $C$ be a circle component of $S_+$. Give $C$ the orientation it has as (part of) an object of $\Cob^{\ext}$; in other words, give $C$ the boundary orientation induced from $F$ if $C$ is outgoing, and give $C$ the reverse of this orientation if $C$ is incoming. Choose a finite set of points $P$ as above. For any $\delta \in \Q$ and $\pi \in \Z/2\Z$, define
\[
F = F_C \colon \Zb^P_{\delta,\pi}(F) \to \Zb^P_{\delta,\pi}(F)
\]
to send 
\[
\varepsilon_F \otimes \omega \mapsto (-1)^{\pi(F)} \varepsilon_F \otimes ([C] \wedge \omega)
\]
if $C$ is outgoing and
\[
\varepsilon_F \otimes \omega \mapsto \varepsilon_F \otimes (\omega \wedge [C])
\]
if $C$ is incoming.
\end{definition}

\begin{proposition}\label{prop:Relations}
The following relations hold:
\begin{itemize}
\item If $I$ is an interval component of $S_+$, then the endomorphism $E = E_I$ from Definition~\ref{def:EActions} satisfies $E^2 = 0$.
\item If $C$ is a circle component of $S_+$, then the endomorphisms $E = E_C$ from Definition \ref{def:EActions} and $F = F_C$ from Definition~\ref{def:FActions} satisfy $E^2 = 0$, $F^2 = 0$, and $EF + FE = 0$.
\item Endomorphisms $E$ or $F$ for any pair of distinct incoming components of $S_+$ anticommute, and the same is true for any pair of distinct outgoing components. Any pair of endomorphisms commute if one comes from an incoming component and the other comes from an outgoing component.
\end{itemize}
\end{proposition}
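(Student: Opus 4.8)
The plan is to recognize every operator in Definitions~\ref{def:EActions} and \ref{def:FActions} as one of the standard operators on the exterior algebra $\wedge^* H_1(F,P)$ — exterior multiplication or contraction, in a ``left'' or ``right'' version according to whether the relevant $S_+$ component is outgoing or incoming — and then to read off all the relations from the classical exterior-algebra identities together with a single homological observation.

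Concretely, for the free $\Z$-module $V = H_1(F,P)$, each $\phi \in V^*$ determines left and right contractions $\iota^L_\phi,\iota^R_\phi$ on $\wedge^* V$, with $\iota^L_\phi(v_1\wedge\cdots\wedge v_k) = \sum_i (-1)^{i-1}\phi(v_i)\, v_1\wedge\cdots\wedge\widehat{v_i}\wedge\cdots\wedge v_k$ and $\iota^R_\phi$ the same but with sign $(-1)^{k-i}$, so that $\iota^R_\phi = (-1)^{k-1}\iota^L_\phi$ on $\wedge^k V$; and each $v \in V$ determines left and right exterior multiplications $e^L_v(\omega) = v\wedge\omega$, $e^R_v(\omega) = \omega\wedge v$. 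Comparing with the definitions, $\overline{\Phi_X} = \iota^L_{\phi_X}$ for an outgoing component $X$ and $\iota^R_{\phi_X}$ for an incoming one, where $\phi_X = [p_X]^*\circ\partial$, and $F_C = e^L_{[C]}$ for an outgoing circle $C$ and $e^R_{[C]}$ for an incoming one; the operators $E_X,F_C$ themselves are these, tensored by $\id$ on $(\Z^{0|1})^{\otimes\pi(F)}$ and twisted by the global sign $(-1)^{\pi(F)}$ exactly when the component is outgoing. I would then invoke the standard identities, for $\star\in\{L,R\}$,
\[
e^\star_v e^\star_w + e^\star_w e^\star_v = 0,\qquad \iota^\star_\phi\iota^\star_\psi + \iota^\star_\psi\iota^\star_\phi = 0,\qquad \iota^\star_\phi e^\star_v + e^\star_v\iota^\star_\phi = \phi(v)\,\id,
\]
together with the ``left commutes with right'' identities $e^L_v e^R_w = e^R_w e^L_v$ and $\iota^L_\phi\iota^R_\psi = \iota^R_\psi\iota^L_\phi$ (holding on the nose, with no sign), and $\iota^R_\phi e^L_v - e^L_v\iota^R_\phi = \pm\phi(v)\,\id$ and $\iota^L_\phi e^R_v - e^R_v\iota^L_\phi = \pm\phi(v)\,\id$ (with a degree-dependent sign, in particular vanishing when $\phi(v)=0$). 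The first line is classical; the second is checked degree by degree, using $\iota^R_\phi = (-1)^{k-1}\iota^L_\phi$ on $\wedge^k V$ for the contraction statements and associativity of $\wedge$ for the exterior ones.

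The one geometric input is that $\phi_X([C]) = 0$ for \emph{every} circle component $C$ of $S_+$ and \emph{every} $S_+$ component $X$: since $[C]\in H_1(F,P)$ is represented by the closed loop $C$, an absolute $1$-cycle, we have $\partial[C] = 0$ in $H_0(P)$ and hence $\phi_X([C]) = [p_X]^*(\partial[C]) = 0$ (orientations of $C$ being irrelevant). Granting this, the relations follow. In a product of two operators from the outgoing sector the twist $(-1)^{\pi(F)}$ occurs an even number of times, in the incoming sector not at all, and in a mixed product once and identically in either order; so it never affects a relation and one may work with the untwisted left/right operators. Within a single sector, the first line of identities gives $E_I^2 = (\iota^\star_{\phi_I})^2 = 0$; for a circle $C$, $E_C^2 = 0$, $F_C^2 = (e^\star_{[C]})^2 = 0$ (as $[C]\wedge[C] = 0$), and $E_C F_C + F_C E_C = \phi_C([C])\,\id = 0$; and for distinct same-sector components $X\neq Y$, $E_X E_Y + E_Y E_X = 0$, $F_C F_{C'} + F_{C'} F_C = 0$, and $E_X F_C + F_C E_X = \phi_X([C])\,\id = 0$, using the geometric input whenever an $F$ is present. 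For one outgoing and one incoming operator, the ``left commutes with right'' identities give outright commutation in the two exterior-exterior and contraction-contraction cases, while in the two exterior-contraction cases the commutator is $\pm\phi_X([C])\,\id = 0$, again by the geometric input (one of the two operators always being an $F$ for a circle).

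I expect the main obstacle to be the sign bookkeeping in the last step: one must confirm that the deliberately asymmetric conventions of Definition~\ref{def:EActions} — the sign $(-1)^{i-1}$ for outgoing components against $(-1)^{k-i}$ for incoming ones — together with the $(-1)^{\pi(F)}$ twist carried only by outgoing operators, conspire to give genuine commutation rather than mere supercommutation between the incoming and outgoing sectors. Essentially this amounts to the two identities $\iota^L_\phi\iota^R_\psi = \iota^R_\psi\iota^L_\phi$ and $\iota^R_\phi e^L_v - e^L_v\iota^R_\phi = \pm\phi(v)\,\id$ (with no residual wedge term), both of which hinge on the degree-dependent comparison $\iota^R_\phi = (-1)^{k-1}\iota^L_\phi$; the rest is a matter of carefully matching Definitions~\ref{def:EActions}--\ref{def:FActions} to the left/right operators and tracking the $\varepsilon_F$ signs, exactly as in the $E$-only analysis of \cite{ManionDHASigns} extended to include the new $F$ operators.
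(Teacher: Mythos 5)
Your proof is correct, and it is a genuinely more systematic route than the paper's. The paper argues informally: for $EF+FE=0$ it observes that acting with $E$ after $F$ forces each term to ``cross an extra factor $[C]$'' compared to acting with $E$ before $F$, and the same style of sign-chasing is used for the other relations. You instead identify $\overline{\Phi_X}$ with the left or right interior product $\iota^L_{\phi_X}$ or $\iota^R_{\phi_X}$, and $F_C$ with the left or right exterior multiplication $e^L_{[C]}$ or $e^R_{[C]}$, then invoke the classical exterior-algebra identities together with the comparison $\iota^R_\phi=(-1)^{k-1}\iota^L_\phi$ on $\wedge^k$, reducing all the relations to bookkeeping of degree-dependent signs and a single evaluation $\phi_X([C])$.

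Two things you did better or more explicitly than the paper, and which are worth highlighting. First, the paper's informal argument for $EF+FE=0$ tacitly uses the fact that the factor $[C]$ introduced by $F$ is \emph{not} one of the ``wedge factors with nontrivial boundary at $p_C$'' that $E$ removes, i.e.\ that $\phi_C([C])=0$; you state and justify this geometric input cleanly ($[C]$ is an absolute cycle, so $\partial[C]=0$ in $H_0(P)$), and you also observe that it is needed for every mixed $E$-versus-$F$ relation involving a circle, not just the same-circle one. Second, you reduce the role of the $(-1)^{\pi(F)}$ twist to a global prefactor that occurs the same number of times in both orders of any two-fold composite, which disposes of it once and for all; the paper never comments on this. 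The one place to be careful, and where you correctly flag the issue yourself, is that the cross-sector claim is genuine \emph{commutation}, not supercommutation, and this hinges precisely on the degree-dependent signs: your calculation that $\iota^R_\phi e^L_v - e^L_v\iota^R_\phi = (-1)^k\phi(v)\,\id$ on $\wedge^k$ (vanishing when $\phi(v)=0$) and that $\iota^L_\phi\iota^R_\psi=\iota^R_\psi\iota^L_\phi$ on the nose is exactly what makes the $(-1)^{i-1}$-versus-$(-1)^{k-i}$ convention work. This is a cleaner packaging of the same phenomenon the paper handles by hand, and it generalizes painlessly to all the cases in the third bullet.
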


\begin{proof}
The relations $E^2 = 0$ in the interval and circle case follow from the signs $(-1)^{i-1}$ or $(-1)^{k-i}$ in Definition~\ref{def:EActions}. The relation $F^2 = 0$ in the circle case follows from $[C] \wedge [C] = 0$.

Let $C$ be a circle component of $S_+$, so that we have two endomorphisms $E$ and $F$ corresponding to $C$. Informally, if $C$ is outgoing then $E$ acts by moving inward from the left (picking up a sign when crossing each factor) until it reaches various wedge factors with nontrivial boundary at the point $p_C$, and summing (with signs) over each way to remove of one of these wedge factors. Acting with $F$ before $E$ will ensure that each term in the subsequent action of $E$ will have to cross an extra factor $[C]$ compared with acting with $E$ before $F$. It follows that $EF + FE = 0$. The anticommutativity claim for two incoming components of $S_+$ and the commutativity claim for one incoming and one outgoing component of $S_+$ follow by similar arguments.
\end{proof}

For an object $M$ of $\Cob^{\ext}$, let $A_{EF}(M)$ denote the tensor product, in order, of superalgebras $\Z[E]/(E^2)$ for interval components of $M$ and $\Z\langle E,F \rangle / (E^2, F^2, EF + FE)$ for circle components of $M$. If $F \colon M_1 \to M_2$ is a morphism in $\Cob^{\ext}$, Proposition~\ref{prop:Relations} gives $\Zb^P_{\delta,\pi}(F)$ the structure of a bimodule over $(A_{EF}(M_2),A_{EF}(M_1))$. If we want, we can also tensor everything with $\F_2$ and forget signs and super structures. We will denote the resulting bimodule as $\Zb^P_{\delta,\F_2}(F)$ since $\pi$ is irrelevant over $\F_2$.

\begin{example}
    Figure~\ref{fig:EAndFActions} shows, for a particular sutured surface $F$ and a particular choice of $S_+$ circle $C$ and $S_+$ interval $I$ in its boundary, the actions of $F_C$, $E_C$, and $E_I$ on an element $\alpha_1 \wedge \alpha_2 \in \Zb^{S_+}_{\delta,\F_2}(F)$.
 \end{example}

\begin{figure}
    \centering
    \includegraphics[scale=0.7]{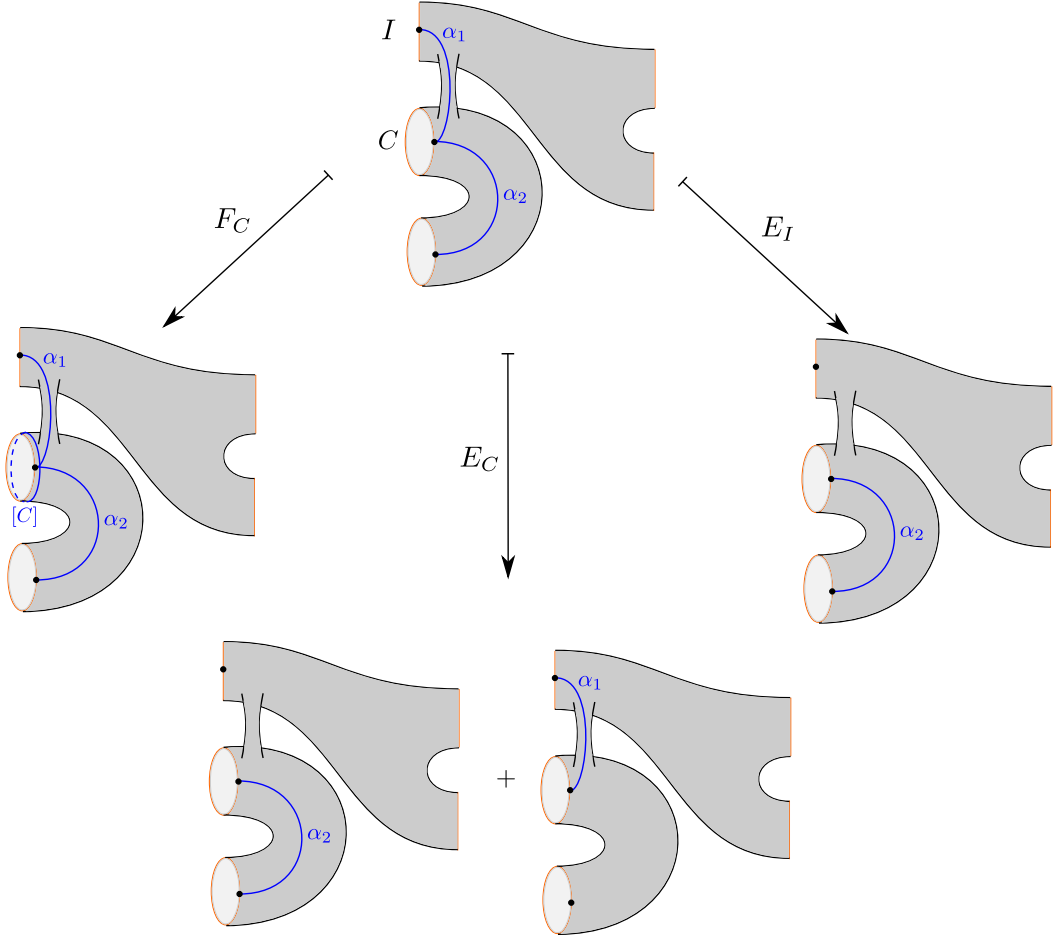}
    \caption{Actions of $F$ for circles and $E$ for circles and intervals (the coefficients are taken in $\F_2$ for simplicity).}
    \label{fig:EAndFActions}
\end{figure}

\subsection{Defining bases from collections of arcs and circles}\label{sec:ChoosingBases}

The proofs of the gluing results in \cite{ManionDHA,ManionDHASigns} make use of certain choices of basis for $H_1(F,S_+)$; the proofs here will use bases for $H_1(F,P)$ similarly. As in \cite[proof of Lemma 4.1]{ManionDHA} and \cite[proof of Lemma 3.1]{ManionDHASigns}, given a sutured surface $F$, we can choose a homeomorphism (preserving the sutured data) between $F$ and a finite disjoint union of ``standard'' sutured surfaces such as the one shown in Figure~\ref{fig:LargeBasis}. Define a collection of oriented arcs and circles in each standard sutured surface, e.g. the set of blue arcs and circles in Figure~\ref{fig:LargeBasis}, as follows.
\begin{itemize}
\item In each of the ``handles,'' take two oriented circles as in Figure~\ref{fig:LargeBasis}.
\item For all the boundary components of $F$ (whether or not they intersect $S_-$ nontrivially), except for one chosen component, take an oriented circle around the boundary component.
\item Choose a connected acyclic directed graph $\Gamma_F$ embedded in $F$ with vertex set $P$.
\end{itemize}
The circles and edges of $\Gamma_F$ give a basis for $H_1(F,P)$, so their wedge products give a basis for $\wedge^* H_1(F,P)$. Applying $\varepsilon_F \otimes -$ to these wedge products, we get a basis for $(\C^{0|1})^{\otimes \pi(F)} \otimes \wedge^* H_1(F,P)\{\delta(F)\}$.

\begin{figure}
    \centering
    \includegraphics[scale=0.8]{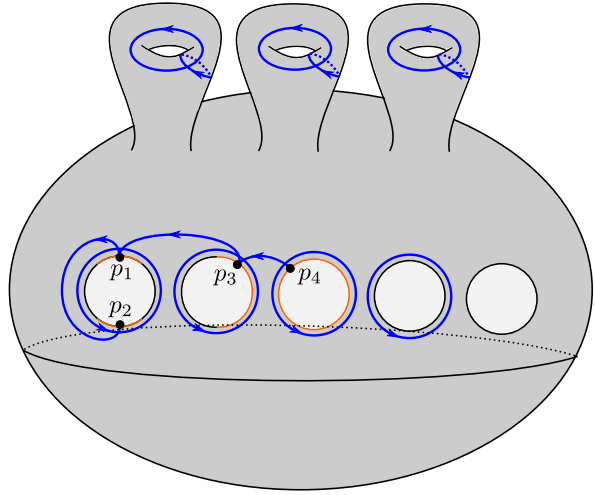}
    \caption{Oriented arcs and circles forming a basis of $H_1(F,P)$, whose products form a basis for $\wedge^* H_1(F,P)$. The set $P$ consists of the points labeled $p_1$, $p_2$, $p_3$, and $p_4$. The subset $S_+$ of $\partial{F}$ is drawn in orange, while $S_-$ is drawn in black. When compared with \cite[Figure 16]{ManionDHA}, we have an extra blue circle here.}
    \label{fig:LargeBasis}
\end{figure}

\subsection{Tensor products} Here we discuss an important family of examples, the closed and open $p$-tuples of pants from Example~\ref{ex:PantsIntro}, in more detail. In fact, for the open $p$-tuple of pants $\Pc_{\mathrm{open}}$, \cite[Proposition 2.7]{ManionDHASigns} applies basically unchanged; we have $H_1(F,P) \cong H_1(F,S_+)$ because $\Pc_{\mathrm{open}}$ has no $S_+$ circles, and the grading shifts of this paper were dealt with in Example~\ref{ex:PantsIntro}. It follows that
\[
\Zb^P_{\delta_{1},\pi_1}(\Pc_{\mathrm{open}}) \cong (\Z[E]/(E^2))^{\otimes p}
\]
as $\Z$-graded bimodules over $(\Z[E]/(E^2), (\Z[E]/(E^2))^{\otimes p})$ where $(\Z[E]/(E^2))^{\otimes p}$ acts on the right by multiplication and the left action of $\Z[E]/(E^2)$ is induced by the coproduct
\[
\Delta(E) = E \otimes 1 + 1 \otimes E.
\]

Let $\Pc_{\mathrm{closed}}$ denote the closed $p$-tuple of pants. In Figure~\ref{fig:ClosedPantsBasis}, label the arcs as $e_1,\ldots,e_p$ from top to bottom and the circles as $\sigma_1,\ldots,\sigma_p$ from top to bottom. Orient them as indicated in Figure~\ref{fig:ClosedPantsBasis}:
\begin{itemize}
    \item $e_p$ is oriented from the outgoing boundary to the incoming boundary, $e_{p-1}$ is oriented from the incoming boundary to the outgoing boundary, and so on in alternating fashion;
    \item $\sigma_p$ is given the opposite of the boundary orientation of its corresponding incoming $S_+$ circle, $\sigma_{p-1}$ is given the boundary orientation of its corresponding incoming $S_+$ circle, and so on in alternating fashion.
\end{itemize}

\begin{figure}
    \centering
    \includegraphics[scale=0.8]{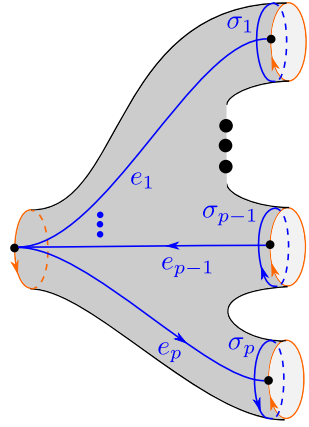}
    \caption{Orientations on arcs $e_1,\ldots,e_p$ and circles $\sigma_1,\ldots,\sigma_p$ giving a basis for $H_1(\Pc_{\mathrm{closed}}, P)$. The orientations shown on the $S_+$ boundary circles of $\Pc_{\mathrm{closed}}$ are the induced boundary orientations without reversal.}
    \label{fig:ClosedPantsBasis}
\end{figure}

We have the following analogue of \cite[Proposition 2.7]{ManionDHASigns} for the closed $p$-tuple of pants. 
\begin{proposition}\label{prop:Pants}
    Define a map 
    \[
    \Zb^P_{\delta_{1/2},\pi_{1/2}}(\Pc_{\mathrm{closed}}) \to (\Z\langle E,F \rangle / (E^2, F^2, EF + FE ) )^{\otimes p}
    \]
    by sending the basis element 
    \[
    \varepsilon_{\Pc_{\mathrm{closed}}} \otimes (e_1^{\delta_1} \wedge \sigma_1^{\delta'_1}) \wedge \cdots \wedge (e_p^{\delta_p} \wedge \sigma_p^{\delta'_p})
    \]
    of $\Zb^P_{\delta_{1/2},\pi_{1/2}}(\Pc_{\mathrm{closed}})$ (where $\delta_i, \delta'_i \in \{0,1\}$) to the element
    \[
    (E^{1 - \delta_1} F^{\delta'_1}) \otimes \cdots \otimes (E^{1 - \delta_p} F^{\delta'_p})
    \]
    of $(\Z\langle E,F \rangle / (E^2, F^2, EF + FE ) )^{\otimes p}$. This map is an isomorphism of $\Z$-graded bimodules over 
    \[
    (\Z\langle E,F \rangle / (E^2, F^2, EF + FE ), \, (\Z\langle E,F \rangle / (E^2, F^2, EF + FE ))^{\otimes p})
    \]
    where $(\Z\langle E,F \rangle / (E^2, F^2, EF + FE ))^{\otimes p}$ acts on the right by multiplication and the left action of $\Z\langle E,F \rangle / (E^2, F^2, EF + FE )$ is induced by the coproduct
\[
\Delta(E) = E \otimes 1 + 1 \otimes E, \qquad \Delta(F) = F \otimes 1 + 1 \otimes F.
\]
\end{proposition}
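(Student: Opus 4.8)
The plan is to follow closely the template used for the open $p$-tuple of pants in \cite[Proposition 2.7]{ManionDHASigns}, writing $R := U^{\Z}(\psl(1|1)) = \Z\langle E,F\rangle/(E^2,F^2,EF+FE)$ for brevity. The argument breaks into three parts: (1) the stated assignment is a well-defined isomorphism of $\Q$-graded super abelian groups; (2) it intertwines the right actions; (3) it intertwines the left actions of $E$ and $F$. For part (1), both sides carry a preferred basis: on the source the wedge monomials coming from the rank-$2p$ basis of $H_1(\Pc_{\mathrm{closed}},P)$ described in Section~\ref{sec:ChoosingBases} and Figure~\ref{fig:ClosedPantsBasis} (the $p$ arcs $e_i$ together with the $p$ circles $\sigma_i$), on the target the monomials $(E^{1-\delta_1}F^{\delta'_1})\otimes\cdots\otimes(E^{1-\delta_p}F^{\delta'_p})$; the assignment is the evident bijection between these bases. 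For the grading, the wedge-degree $\sum_i(\delta_i+\delta'_i)$ shifted by $\delta_{1/2}(\Pc_{\mathrm{closed}})=-p$ from Example~\ref{ex:PantsIntro} matches the target degree $\sum_i(-(1-\delta_i)+\delta'_i)$; for parities, $\Pc_{\mathrm{closed}}$ has no $S_+$ intervals and no boundary component meeting $S_-$, so it is a morphism in $\Cob^{\ext}_{\cpp}$ with $\pi_{1/2}(\Pc_{\mathrm{closed}})\equiv p\pmod 2$, and the resulting constant parity shift on every source monomial matches the target parity $\sum_i((1-\delta_i)+\delta'_i)$.

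Next I would record a reduction: the target $R^{\otimes p}$ is generated as a right module by $1^{\otimes p}$, which corresponds under the map to the monomial $g := \varepsilon_{\Pc_{\mathrm{closed}}}\otimes e_1\wedge\cdots\wedge e_p$ (all arcs, no circles). Hence, once part (2) is established, part (3) reduces to checking that the left actions of $E_{C_0}$ and $F_{C_0}$ for the single outgoing circle $C_0$ send $g$ to the images of $\Delta(E)(1^{\otimes p})$ and $\Delta(F)(1^{\otimes p})$, since left-linearity on the whole source then follows formally from right-linearity. For part (2), the $i$-th incoming circle $C_i$ contributes $E_{C_i}$, which by Definition~\ref{def:EActions} deletes with a sign the unique basis factor $e_i$ having nonzero boundary at $p_i$ and annihilates a monomial if $e_i$ is absent, and $F_{C_i}$, which by Definition~\ref{def:FActions} right-wedges with $[C_i]=\pm\sigma_i$; on the target, right multiplication by $E$ or $F$ in the $i$-th tensor slot does the matching thing, using $E^2=0$ and $EFE = -FE^2 = 0$ and similarly for $F$. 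The only content is the agreement of signs, and here the interleaved ordering $(e_1\wedge\sigma_1)\wedge\cdots\wedge(e_p\wedge\sigma_p)$ and the alternating orientations prescribed in Figure~\ref{fig:ClosedPantsBasis} are exactly what is needed: deleting or inserting $e_i,\sigma_i$ requires commuting only past the fixed two-element block for index $i$, so the Koszul sign on the source equals that of inserting or deleting $E$ or $F$ in the $i$-th slot, while the alternating choices of orientation for $e_i$ versus $[C_i]$ absorb the alternating sign of the boundary map and of $[C_i]$ relative to $\sigma_i$.

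For part (3), since every $e_i$ has an endpoint at $p_0$, the map $\overline{\Phi_{C_0}}$ deletes (with a sign) each factor $e_i$ of $g$, and collecting the alternating orientation signs of the $e_i$ together with the sign in Definition~\ref{def:EActions} gives $E_{C_0}(g) = (-1)^{\pi_{1/2}(\Pc_{\mathrm{closed}})+p}\sum_i e_1\wedge\cdots\widehat{e_i}\cdots\wedge e_p = \sum_i e_1\wedge\cdots\widehat{e_i}\cdots\wedge e_p$ (the exponent being even), which maps to $\sum_i 1\otimes\cdots\otimes E\otimes\cdots\otimes 1 = \Delta(E)(1^{\otimes p})$. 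Similarly, using the relation $[C_0] = -\sum_i[C_i] = \sum_i(\pm\sigma_i)$ in $H_1(\Pc_{\mathrm{closed}};\Z)$, the sign $(-1)^{\pi_{1/2}}$ in Definition~\ref{def:FActions}, and the commutation signs needed to move each $\sigma_i$ into its block in $g$, one gets $F_{C_0}(g) = \sum_i e_1\wedge\cdots\wedge(e_i\wedge\sigma_i)\wedge\cdots\wedge e_p$, mapping to $\sum_i 1\otimes\cdots\otimes F\otimes\cdots\otimes 1 = \Delta(F)(1^{\otimes p})$.

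The main obstacle is precisely the sign bookkeeping in parts (2) and (3): one must check that the Koszul signs from reordering wedge factors, the alternating orientation conventions on the $e_i$ and $\sigma_i$, and the parity-dependent signs built into Definitions~\ref{def:EActions} and \ref{def:FActions} combine to give exactly the signs in the iterated coproducts and in the multiplication, with no leftover global sign. The $E$-only, interval-only case of this was carried out in \cite[Proposition 2.7]{ManionDHASigns}; the genuinely new ingredients here are the odd generator $F$ and the homological relation $[C_0] = -\sum_i[C_i]$ among the boundary circles, and it is the interleaved ordering of the basis that keeps the sign computation local in the index $i$ and hence tractable.
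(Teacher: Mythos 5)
Your proposal is correct in outline and uses a genuinely different organization from the paper's proof. The paper verifies both the left actions (by $E$ and by $F$) and the right actions (by $E_i$ and $F_i$) directly on every basis element of $\Zb^P_{\delta_{1/2},\pi_{1/2}}(\Pc_{\mathrm{closed}})$, matching the resulting signed sums term by term against the corresponding expressions in $R^{\otimes p}$. You instead check the right actions on all basis elements and then reduce the left action check to the single cyclic generator $g=\varepsilon\otimes e_1\wedge\cdots\wedge e_p$, invoking the commutativity of left and right actions (Proposition~\ref{prop:Relations}) and the fact that $1^{\otimes p}$ generates $R^{\otimes p}$ as a right module. That reduction is valid and buys you a shorter verification: your left-action check is only a two-line computation on $g$ (which I verified matches the paper's general formulas specialized to $\delta_i=1,\delta'_i=0$, with overall sign $+1$), rather than the full four-case check. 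The trade-off is that correctness of the reduction also requires the target $R^{\otimes p}$ to satisfy the bimodule axiom, i.e.\ that $\Delta$ extends to a superalgebra map $R\to R^{\otimes p}$; this is true but worth a sentence.

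One imprecision worth flagging: your claim that ``deleting or inserting $e_i,\sigma_i$ requires commuting only past the fixed two-element block for index $i$, so the Koszul sign on the source equals that of inserting or deleting $E$ or $F$ in the $i$-th slot'' is not literally true. When $E_i$ removes $e_i$, the incoming-side sign convention of Definition~\ref{def:EActions} produces a sign depending on \emph{all} wedge factors to the right of $e_i$, namely $(-1)^{\delta'_i+\sum_{j>i}(\delta_j+\delta'_j)}$ times the orientation sign $(-1)^{p-i}$, while the target sign from the super tensor product is $(-1)^{\delta'_i+\sum_{j>i}((1-\delta_j)+\delta'_j)}$. These agree precisely because $(-1)^{\sum_{j>i}(1-\delta_j)}=(-1)^{p-i}\cdot(-1)^{-\sum_{j>i}\delta_j}$ and the orientation factor $(-1)^{p-i}$ supplies the discrepancy. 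So the agreement is not local to the $i$-th block; rather, the degree conventions on source (degree $\delta_j+\delta'_j$) and target (degree $(1-\delta_j)+\delta'_j$) differ blockwise by one, and the alternating orientations of the $e_j$ are what exactly compensate for this across all blocks $j>i$. Your intuition that ``the alternating choices of orientation... absorb the alternating sign'' is the right idea, but the phrase ``only past the fixed two-element block'' misstates the mechanism. The paper's displayed formulas \eqref{eq:EiActingOnRightWedge}--\eqref{eq:EiActingOnRightTensor} and \eqref{eq:FiActingOnRightWedge}--\eqref{eq:FiActingOnRightTensor} carry this out explicitly; your write-up should as well, since this is exactly the content you yourself identify as ``the main obstacle.''
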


\begin{proof}
    By Example~\ref{ex:PantsIntro} and the definition of $\pi_{1/2}$ as the parity of $\delta_{1/2}$, the map respects $\Z$-grading and parity. It gives a bijection on basis elements, so we need to show it is compatible with left multiplication by $\Z\langle E,F \rangle / (E^2, F^2, EF + FE )$ and right multiplication by $(\Z\langle E,F \rangle / (E^2, F^2, EF + FE ))^{\otimes p}$.

    We start with left multiplication by $E$. Since $\pi_{1/2}(\Pc_{\mathrm{closed}}) = p$, the product
    \[
    E \cdot (\varepsilon_{\Pc_{\mathrm{closed}}} \otimes (e_1^{\delta_1} \wedge \sigma_1^{\delta'_1}) \wedge \cdots \wedge (e_p^{\delta_p} \wedge \sigma_p^{\delta'_p})),
    \]
    is given by
    \begin{equation}\label{eq:EActingOnLeftWedge}
    \begin{aligned}
    &(-1)^{p} \sum_{i: \delta_i = 1}(-1)^{\delta_1 + \delta'_1 + \cdots + \delta_{i-1} + \delta'_{i-1}} (-1)^{p-i+1} \\
    &\cdot \varepsilon_{\Pc_{\mathrm{closed}}} \otimes (e_1^{\delta_1} \wedge \sigma_1^{\delta'_1}) \wedge \cdots \wedge (1 \wedge \sigma_i^{\delta'_i}) \wedge \cdots \wedge (e_p^{\delta_p} \wedge \sigma_p^{\delta'_p});
    \end{aligned}
    \end{equation}
    the factor $(-1)^{p-i+1}$ comes from our choice of orientation on $e_i$. Meanwhile,
    \[
    E \cdot ((E^{1 - \delta_1} F^{\delta'_1}) \otimes \cdots \otimes (E^{1 - \delta_p} F^{\delta'_p}))
    \]
    is given by
    \begin{equation}\label{eq:EActingOnLeftTensor}
    \sum_{i : \delta_i = 1} (-1)^{(1-\delta_1) + \delta'_1 + \cdots + (1 - \delta_{i-1}) + \delta'_{i-1}} ((E^{1 - \delta_1} F^{\delta'_1}) \otimes \cdots \otimes (E F^{\delta'_i}) \otimes  \cdots \otimes (E^{1 - \delta_p} F^{\delta'_p})).
    \end{equation}
    The expressions \eqref{eq:EActingOnLeftWedge} and \eqref{eq:EActingOnLeftTensor} correspond under our map, so our map respects left multiplication by $E$.

    Next, to compute
    \begin{equation}\label{eq:FActingOnLeftWedgeStart}
    F \cdot (\varepsilon_{\Pc_{\mathrm{closed}}} \otimes (e_1^{\delta_1} \wedge \sigma_1^{\delta'_1}) \wedge \cdots \wedge (e_p^{\delta_p} \wedge \sigma_p^{\delta'_p})),
    \end{equation}
    we note that if $[C]$ is the homology class of the outgoing circle of $\Pc_{\mathrm{closed}}$, with boundary orientation, then
    \[
    [C] = \sigma_p - \sigma_{p-1} + \cdots + (-1)^{p-1} \sigma_1.
    \]
    Thus, \eqref{eq:FActingOnLeftWedgeStart} equals
    \begin{equation}\label{eq:FActingOnLeftWedgeEnd}
    \begin{aligned}
    &(-1)^p \sum_{i: \delta'_i = 0} (-1)^{\delta_1 + \delta'_1 + \cdots + \delta_{i-1} + \delta'_{i-1} + \delta_i} (-1)^{p-i} \\
    &\cdot \varepsilon_{\Pc_{\mathrm{closed}}} \otimes (e_1^{\delta_1} \wedge \sigma_1^{\delta'_1}) \wedge \cdots \wedge (e_i^{\delta_i} \wedge \sigma_i) \wedge \cdots \wedge (e_p^{\delta_p} \wedge \sigma_p^{\delta'_p}).
    \end{aligned}
    \end{equation}
    Meanwhile,
    \[
    F \cdot ((E^{1 - \delta_1} F^{\delta'_1}) \otimes \cdots \otimes (E^{1 - \delta_p} F^{\delta'_p}))
    \]
    is given by
    \begin{equation}\label{eq:FActingOnLeftTensor}
    \sum_{i: \delta'_i = 0} (-1)^{(1-\delta_1) + \delta'_1 + \cdots + (1-\delta_{i-1}) + \delta'_{i-1} + (1-\delta_i)} (E^{1 - \delta_1} F^{\delta'_1}) \otimes \cdots \otimes (E^{1 - \delta_i} F) \otimes \cdots \otimes (E^{1 - \delta_p} F^{\delta'_p}).
    \end{equation}
    Since \eqref{eq:FActingOnLeftWedgeEnd} and \eqref{eq:FActingOnLeftTensor} correspond under our map, our map respects left multiplication by $F$.

    Now we consider right multiplication by the element $E_i := 1 \otimes \cdots \otimes 1 \otimes E \otimes 1 \otimes \cdots \otimes 1$ of $(\Z\langle E,F \rangle / (E^2, F^2, EF + FE ))^{\otimes p}$. Computing
    \[
    (\varepsilon_{\Pc_{\mathrm{closed}}} \otimes (e_1^{\delta_1} \wedge \sigma_1^{\delta'_1}) \wedge \cdots \wedge (e_p^{\delta_p} \wedge \sigma_p^{\delta'_p})) \cdot E_i,
    \]
    we get zero if $\delta_i = 0$, while if $\delta_i = 1$ we get
    \begin{equation}\label{eq:EiActingOnRightWedge}
    \begin{aligned}
    &(-1)^{\delta'_i + \delta_{i+1} + \delta'_{i+1} + \cdots + \delta_p + \delta'_p} (-1)^{p-i} \\
    &\cdot \varepsilon_{\Pc_{\mathrm{closed}}} \otimes (e_1^{\delta_1} \wedge \sigma_1^{\delta'_1}) \wedge \cdots \wedge (e_i \wedge \sigma_i^{\delta'_i}) \wedge \cdots \wedge (e_p^{\delta_p} \wedge \sigma_p^{\delta'_p}).
    \end{aligned}
    \end{equation}
    Meanwhile,
    \[
    ((E^{1 - \delta_1} F^{\delta'_1}) \otimes \cdots \otimes (E^{1 - \delta_p} F^{\delta'_p})) \cdot E_i
    \]
    is also zero if $\delta_i = 0$, while if $\delta_i = 1$ we get
    \begin{equation}\label{eq:EiActingOnRightTensor}
    (-1)^{\delta'_i + (1-\delta_{i+1}) + \delta'_{i+1} + \cdots + (1-\delta_p) + \delta'_p} (E^{1 - \delta_1} F^{\delta'_1}) \otimes \cdots \otimes (E F^{\delta'_i}) \otimes \cdots \otimes (E^{1 - \delta_p} F^{\delta'_p}).
    \end{equation}
    Since \eqref{eq:EiActingOnRightWedge} and \eqref{eq:EiActingOnRightTensor} correspond under our map, our map respects right multiplication by $E_i$.

    Finally, we consider right multiplication by the element $F_i := 1 \otimes \cdots \otimes 1 \otimes F \otimes 1 \otimes \cdots \otimes 1$ of $(\Z\langle E,F \rangle / (E^2, F^2, EF + FE ))^{\otimes p}$. The action
    \[
        (\varepsilon_{\Pc_{\mathrm{closed}}} \otimes (e_1^{\delta_1} \wedge \sigma_1^{\delta'_1}) \wedge \cdots \wedge (e_p^{\delta_p} \wedge \sigma_p^{\delta'_p})) \cdot F_i
    \]
    is zero if $\delta'_i = 1$, while if $\delta'_i = 0$ we get
    \begin{equation}\label{eq:FiActingOnRightWedge}
    (-1)^{\delta_{i+1} + \delta'_{i+1} + \cdots + \delta_p + \delta'_p} (-1)^{p-i} \varepsilon_{\Pc_{\mathrm{closed}}} \otimes (e_1^{\delta_1} \wedge \sigma_1^{\delta'_1}) \wedge \cdots \wedge (e_i^{\delta_i} \wedge \sigma_i) \wedge \cdots \wedge (e_p^{\delta_p} \wedge \sigma_p^{\delta'_p}).
    \end{equation}
    Meanwhile,
    \[
    ((E^{1 - \delta_1} F^{\delta'_1}) \otimes \cdots \otimes (E^{1 - \delta_p} F^{\delta'_p})) \cdot F_i
    \]
    is also zero if $\delta'_i = 1$, while if $\delta'_i = 0$ we get
    \begin{equation}\label{eq:FiActingOnRightTensor}
    (-1)^{(1-\delta_{i+1}) + \delta'_{i+1} + \cdots + (1-\delta_p) + \delta'_p} (E^{1 - \delta_1} F^{\delta'_1}) \otimes \cdots \otimes (E^{1 - \delta_i} F) \otimes \cdots \otimes (E^{1 - \delta_p} F^{\delta'_p}).
    \end{equation}
    Since \eqref{eq:FiActingOnRightWedge} and \eqref{eq:FiActingOnRightTensor} correspond under our map, we have proved the proposition.
\end{proof}

\subsection{Projectivity of the actions} Since the TQFT $\Zb^{GY}_{\ext}$ involves categories of projective modules, we should investigate when $\Zb^{P}_{\delta,\pi}(F)$ is projective as a left module or right module.

\begin{proposition}\label{prop:Projective}
    Let $M_2 \xleftarrow{F} M_1$ be a morphism in $\Cob^{\ext}$. For any $\delta \in \Q$ and $\pi \in \Z/2\Z$:
    \begin{itemize}
        \item If each component of $F$ intersects the incoming boundary $M_1$ then the left action of $A_{EF}(M_2)$ on $\Zb^P_{\delta,\pi}(F)$ is projective. 
        \item if each component of $F$ intersects the outgoing boundary $M_2$ then the right action of $A_{EF}(M_1)$ on $\Zb^P_{\delta,\pi}(F)$ is projective.
    \end{itemize} 
\end{proposition}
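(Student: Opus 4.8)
The plan is to prove the stronger statement that, under the hypothesis of each bullet, $\Zb^P_{\delta,\pi}(F)$ is in fact \emph{free} as a one-sided module over the relevant algebra; projectivity is then immediate. Since $\Zb^P_{\delta,\pi}(F) = (\Z^{0|1})^{\otimes\pi(F)}\otimes\wedge^* H_1(F,P)\{\delta(F)\}$, and tensoring with $\Z^{0|1}$ or shifting the $\Q$-grading are equivalences of (graded, super) module categories, it suffices to treat $\wedge^* H_1(F,P)$ with its bare module structure. Both $A_{EF}(M_2)$ and $\wedge^* H_1(F,P)$ factor as (graded, super) tensor products over the connected components $F_0$ of $F$, and a tensor product of free modules over a tensor product of algebras is free, so I would reduce to a single connected $F_0$ that meets $M_1$, proving the first bullet; the second bullet is identical after interchanging $M_1$ and $M_2$, ``incoming'' and ``outgoing''. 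The one thing to record about such an $F_0$ is that it has at least one \emph{incoming} $S_+$ component, so it has nonempty boundary and at least one boundary component that is not an outgoing $S_+$ circle.

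Next I would fix the standard basis of $H_1(F_0,P\cap F_0)$ from Section~\ref{sec:ChoosingBases}: the handle circles, one circle $[\partial_i]$ around each boundary component of $F_0$ except one omitted component, and the edges of a spanning tree $\Gamma_{F_0}$ on the vertex set $P\cap F_0$. I would choose the omitted boundary component to be one that is \emph{not} an outgoing $S_+$ circle (possible by the previous paragraph). Write $H_1(F_0,P\cap F_0) = H_\circ \oplus H_e$, where $H_\circ$ is spanned by the circle basis elements and $H_e$ by the tree edges. The decisive point is that every functional $\phi_X$ (for $X$ an $S_+$ component of $F_0$) vanishes on $H_\circ$, since each basis circle bounds in $F_0$ and so has zero image under $\partial\colon H_1(F_0,P\cap F_0)\to H_0(P\cap F_0)$; whereas on the basis of $H_e$ the matrix of the $\phi_X$ is, up to signs, the vertex--edge incidence matrix of the tree $\Gamma_{F_0}$. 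Deleting the row corresponding to an incoming $S_+$ component of $F_0$ yields a square reduced incidence matrix, which has determinant $\pm1$; hence $\{\phi_X : X\text{ an outgoing }S_+\text{ component of }F_0\}$ is part of a $\Z$-basis of $H_e^*$. Dually, because the boundary-circle classes of $F_0$ satisfy the single relation $\sum_i[\partial_i]=0$ and the omitted component is not an outgoing $S_+$ circle, the classes $[C]$ of the outgoing $S_+$ circles of $F_0$ are part of a $\Z$-basis of $H_\circ$. Both conclusions fail precisely when $F_0$ has no incoming $S_+$ component, and this is the only place the hypothesis is used.

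With this in hand I would split $H_e = \spann\{w_X\}\oplus H_e''$ with $\{w_X\}$ dual to the outgoing $\phi_X$ and $\phi_X(H_e'')=0$, and $H_\circ = \spann\{[C]\}\oplus H_\circ''$ over outgoing $S_+$ circles $C$. Using the decomposition $\wedge^*(U\oplus W)\cong\wedge^* U\otimes\wedge^* W$ (as graded-super algebras, with Koszul signs), under which left wedging by a vector of $U$ and contraction by a covector supported on $U$ act only on the $\wedge^* U$ tensor factor, one gets an isomorphism of modules
\[
\wedge^* H_1(F_0,P\cap F_0)\;\cong\;\wedge^*\spann\{[C]\}\;\otimes\;\wedge^*\spann\{w_X\}\;\otimes\;\bigl(\wedge^* H_\circ''\otimes\wedge^* H_e''\bigr),
\]
in which the generators $F_C$ (for outgoing circles) act by left wedging on the first factor, the generators $E_I$ and $E_C$ (for outgoing intervals and circles) act by contraction with $\phi_X$ on the second factor, and every generator acts as zero on the third factor. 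Now $\wedge^*\spann\{[C]\}$ is the regular module over $\wedge^*(V_F)$, $V_F=\spann\{F_C\}$, so it is free of rank one; and $\wedge^*\spann\{w_X\}$ with its contraction action is cyclic on the top form $\bigwedge_X w_X$, the map $\wedge^*(V_E)\to\wedge^*\spann\{w_X\}$, $a\mapsto a\cdot\bigwedge_X w_X$, carrying the monomial basis bijectively up to sign to the monomial basis, so it too is free of rank one (here $V_E=\spann\{E_I,E_C\}\cong\spann\{w_X\}^*$). Since $A_{EF}(M_2\cap F_0)\cong\wedge^*(V_E)\otimes\wedge^*(V_F)$ as algebras, the displayed module is $A_{EF}(M_2\cap F_0)\otimes N$ with $N$ a finite free $\Z$-module and the algebra acting only on the first factor; hence it is free. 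Reassembling over components and reinstating the grading shift and parity twist completes the argument.

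The step I expect to be the only real obstacle is the integral linear algebra in the second paragraph: verifying that the outgoing functionals $\phi_X$, and the outgoing circle classes $[C]$, are \emph{part of a $\Z$-basis} (not merely $\Z$-linearly independent), so that the relevant tensor factors are genuinely free over $\Z$. This is exactly where the unimodularity of reduced tree incidence matrices, and the single relation among boundary-circle classes, are needed, and where the assumption ``every component of $F$ meets $M_1$'' is used. Over $\F_2$ the analogous statement for $\Zb^P_{\delta,\F_2}(F)$ is a little softer: one needs only linear independence together with the fact that a finite-dimensional module over the local Artinian algebra $\wedge^*(\F_2^N)$ is projective if and only if free.
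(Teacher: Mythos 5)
Your proof is correct and takes essentially the same approach as the paper: exhibit an explicit free $A_{EF}$-basis for $\wedge^* H_1(F_0,P\cap F_0)$ built from a chosen system of arcs and circles, using the hypothesis to ensure the arcs and circles dual to the outgoing $E$- and $F$-generators fit into a $\Z$-basis. The one difference is that the paper sidesteps your unimodularity-of-reduced-incidence-matrices step by taking the tree to be a star centered at a point $p\in F_0\cap P\cap M_1$ and omitting the basis circle around the boundary component of $F_0$ containing $p$; with that choice each $\phi_X$ is visibly dual to the unique arc $e_X$ ending at $X$, and the free basis you identify (wedge products divisible by every outgoing $e_X$ and by no outgoing $\sigma_C$) can be read off directly.
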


\begin{proof}
    We will prove the first statement; the second is similar. For a component $F_0$ of $F$, pick a distinguished point $p \in F_0 \cap P \cap M_1$. We can arrange that all basis arcs in $F_0$ have one endpoint at $p$, and we can choose the component of $M_1$ containing $p$ as the unique boundary circle in $F_0$ to not get a basis circle around it. Using the resulting $\Z$-basis, one can see that $\Zb^P_{\delta,\pi}(F)$ is free as a left module over $A_{EF}(M_2)$ with an $A_{EF}(M_2)$-basis consisting of all wedge products $\omega$ such that:
    \begin{itemize}
        \item For each circle component $C$ of $M_2$, $\omega$ is divisible by the corresponding basis arc $e_C$ but not by the corresponding basis circle $\sigma_C$;
        \item For each interval component $I$ of $M_2$, $\omega$ is divisible by the corresponding basis arc $e_I$.
    \end{itemize}
    It follows that $\Zb^P_{\delta,\pi}(F)$ is projective as a left module over $A_{EF}(M_2)$.
\end{proof}

\begin{remark}
Over a field, the rings $A_{EF}(M)$ are local, so projective modules are free. If any component of $F$ is disjoint from $M_1$, then one can construct a nonzero element of $A_{EF}(M_2)$ that acts as zero on $\Zb^P_{\delta,\pi}(F)$, contradicting freeness, so the converse to Proposition~\ref{prop:Projective} holds over a field.
\end{remark}

\begin{remark}
    Recall that for an object $M$ of $\Cob^{ext}$, the algebra $A(M)$ considered in \cite{ManionDHASigns} is a tensor product only of copies of $\Z[E]/(E^2)$, indexed only by interval components of $M$ and not circle components. In \cite{ManionDHASigns}, projectivity of the actions of $A(M_2)$ and $A(M_1)$ on $\Zb^{S_+}_{\delta,\pi}(F)$ satisfies properties analogous to those in Proposition~\ref{prop:Projective}; if each component of $F$ intersects $M_1$ nontrivially then the left action of $A(M_2)$ is projective, and similarly for $M_2$ and the right action of $A(M_1)$. The converse holds if we work over a field.
\end{remark} 

\subsection{Relating the larger and smaller state spaces}

 We now discuss how to recover the state spaces $\Zb^{S_+}_{\delta,\pi}(F)$ of \cite{ManionDHASigns} from the spaces $\Zb^{P}_{\delta,\pi}(F)$ we consider here. Let $M_2 \xleftarrow{F} M_1$ be a morphism in $\Cob^{\ext}$; when defining $\Zb^{S_+}_{\delta,\pi}(F)$ with its actions of $A(M_i)$, it does not matter algebraically whether $S_+$ circles of $F$ are considered as incoming or outgoing, so for simplicity we will assume all $S_+$ circles of $F$ are in its incoming boundary $M_1$.

\begin{proposition}\label{prop:SPlusFromP}
    For any $\delta \in \Q$ and $\pi \in \Z/2\Z$, we have
    \[
    \Zb^{S_+}_{\delta,\F_2}(F) \cong \Zb^P_{\delta,\F_2}(F) \otimes_{(U^{\F_2}(\psl(1|1)))^{\otimes p}} (V(0,0)^{\F_2})^{\otimes p}
    \]
    and
    \[
    \Zb^{S_+}_{\delta,\pi}(F) \cong \Zb^P_{\delta,\pi}(F) \otimes_{(U^{\Z}(\psl(1|1)))^{\otimes p}} (V(0,0)_{\overline{0}})^{\otimes p}.
    \]
\end{proposition}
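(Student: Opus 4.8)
The plan is to realize both isomorphisms as a single quotient: $\Zb^{S_+}$ is $\Zb^P$ with the ``circle wedge classes'' $[C_1],\dots,[C_p]$ killed, and killing these is exactly what tensoring over $(U(\psl(1|1)))^{\otimes p}$ with $(V(0,0))^{\otimes p} = (U(\psl(1|1))/(F))^{\otimes p}$ does. Throughout, as in the paragraph preceding the proposition, we take all $p$ of the $S_+$ circles $C_1,\dots,C_p$ to be incoming.

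First I would record the topological input. The long exact sequence of the triple $(F,S_+,P)$, together with $H_1(S_+,P) \cong \Z^p$ generated by the fundamental classes of the $C_i$ (each interval component of $S_+$ contributing nothing) and $H_0(S_+,P) = 0$ (one point of $P$ per component of $S_+$), gives a short exact sequence $0 \to \Z^p \to H_1(F,P) \to H_1(F,S_+) \to 0$ whose left-hand map sends the generators to the classes $[C_1],\dots,[C_p] \in H_1(F,P)$ of Definition~\ref{def:FActions}. Since $H_1(F,S_+)$ is free, choosing a splitting and using the Koszul decomposition of the exterior algebra yields $\wedge^* H_1(F,S_+) \cong \wedge^* H_1(F,P) / ([C_1],\dots,[C_p])$, the quotient by the (graded, two-sided) ideal generated by the $[C_i]$. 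Naturality of the boundary maps under $(F,P) \hookrightarrow (F,S_+)$ and the identification $H_0(P) \cong H_0(S_+)$ also show that each functional $\phi_X$ on $H_1(F,P)$ of Definition~\ref{def:EActions} factors through $H_1(F,S_+)$ and agrees there with the functional used for $\Zb^{S_+}$ in \cite{ManionDHASigns}; in particular the left $A(M_2)$-action, the right $A(M_1)$-action for interval components, and the residual $E$-actions for $S_+$ circles descend compatibly along the quotient.

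Next I would do the algebra. One checks that $(F) = \spann_\Z\{F,EF\}$ is a two-sided ideal of $U^\Z(\psl(1|1))$ (using $EF + FE = 0$ and $F^2 = 0$), so $V(0,0)_{\overline 0} = U^\Z(\psl(1|1))/(F)$ is a bimodule and $(V(0,0)_{\overline 0})^{\otimes p} = (U^\Z(\psl(1|1)))^{\otimes p}/J$ where $J = \sum_{i=1}^p 1^{\otimes(i-1)} \otimes (F) \otimes 1^{\otimes(p-i)}$ is generated as a right ideal by $F_i := 1 \otimes \cdots \otimes F \otimes \cdots \otimes 1$. By right-exactness of the tensor product,
\[
\Zb^P_{\delta,\pi}(F) \otimes_{(U^\Z(\psl(1|1)))^{\otimes p}} (V(0,0)_{\overline 0})^{\otimes p} \;\cong\; \Zb^P_{\delta,\pi}(F)\big/\big(\Zb^P_{\delta,\pi}(F)\cdot J\big),
\]
and the same holds verbatim over $\F_2$. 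The key step is then to identify $\Zb^P_{\delta,\pi}(F)\cdot J$ with $(\Z^{0|1})^{\otimes\pi(F)} \otimes \big(([C_1],\dots,[C_p]) \subseteq \wedge^* H_1(F,P)\big)\{\delta(F)\}$. By Definition~\ref{def:FActions}, since the $C_i$ are incoming, right multiplication by $F_i$ on $\Zb^P$ is $\varepsilon_F \otimes \omega \mapsto \varepsilon_F \otimes (\omega \wedge [C_i])$, so $\Zb^P \cdot F_i = \varepsilon_F \otimes (\wedge^* H_1(F,P) \wedge [C_i])\{\delta(F)\}$. Because $[C_i]$ is the image of a cycle in $F$ we have $\partial[C_i] = 0$ in $H_0(P)$, hence $\phi_X([C_i]) = 0$ for every component $X$ of $S_+$; since each $E_X$ is a contraction against $\phi_X$ and each $F_{C_j}$ is a wedge, $\varepsilon_F \otimes (\wedge^* H_1(F,P)\wedge[C_i])\{\delta(F)\}$ is already closed under all the $E$- and $F$-actions, i.e.\ it is a sub-bimodule. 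Summing over $i$ and using graded-commutativity of the exterior algebra identifies $\Zb^P\cdot J$ with the claimed ideal, and taking the quotient gives $(\Z^{0|1})^{\otimes\pi(F)} \otimes \wedge^* H_1(F,S_+)\{\delta(F)\} = \Zb^{S_+}_{\delta,\pi}(F)$ with the correct module structures by the first step.

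The main obstacle is bookkeeping rather than conceptual: one must track the incoming/outgoing orientation conventions on $[C_i]$, the Koszul signs and the $\varepsilon_F$-factor in the $\Z$ version, and the fact that we tensor over only the circle part of $A_{EF}(M_1)$. The one genuinely load-bearing observation is $\partial[C_i] = 0$: this makes every contraction operator act through the wedge factor $[C_i]$, which is what forces $\Zb^P\cdot J$ to be the \emph{full} exterior ideal (and a sub-bimodule) rather than merely the image of the $F_i$-actions.
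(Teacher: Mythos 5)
Your proof takes essentially the same approach as the paper's: identify the tensor product with the quotient of $\Zb^P_{\delta,\pi}(F)$ by the images of the $F_{C_i}$-endomorphisms, then recognize that quotient as $\Zb^{S_+}_{\delta,\pi}(F)$. The paper carries out the second step more tersely by using the explicit bases of Section~\ref{sec:ChoosingBases}: the quotient kills the basis elements divisible by each $[C]$, leaving exactly the basis for $\wedge^* H_1(F,S_+)$ chosen in \cite{ManionDHA,ManionDHASigns}; you instead invoke the long exact sequence of the triple $(F,S_+,P)$ together with a Koszul decomposition, reaching the same identification with slightly more machinery.

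One small imprecision worth flagging: the sequence $0 \to \Z^p \to H_1(F,P) \to H_1(F,S_+) \to 0$ is not in general exact on the left. If some $S_+$ circle $C_i$ bounds a disk component of $F$, then $[C_i] = 0$ in $H_1(F,P)$, so the map $H_1(S_+,P) \to H_1(F,P)$ is not injective. What the triple LES actually gives (using $H_0(S_+,P)=0$) is exactness of $H_1(S_+,P) \to H_1(F,P) \to H_1(F,S_+) \to 0$, so the kernel of the surjection is \emph{generated} by the classes $[C_i]$ but need not be free of rank $p$. This does not break your argument -- you only use surjectivity, the generating set for the kernel, and freeness of $H_1(F,S_+)$ (to split and apply the Koszul decomposition) -- but the stated short exact sequence should be replaced by the right-exact portion.
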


\begin{proof}
    The tensor products on the right are the quotients of $\Zb^P_{\delta,\F_2}(F)$ and $\Zb^P_{\delta,\pi}(F)$ by the image of the endomorphism $F_C$ for all circle components $C$ of $S_+$. Choosing bases as in Section~\ref{sec:ChoosingBases}, this quotient annihilates all basis elements divisible by the class $[C]$ of any $S_+$ circle $C$ and imposes no other relations on the remaining basis elements. The remaining basis elements are the same as the basis elements chosen for $\Zb^{S_+}_{\delta,\F_2}(F)$ and $\Zb^{S_+}_{\delta,\pi}(F)$ in \cite{ManionDHA,ManionDHASigns}.
\end{proof}

\section{A gluing theorem for intervals and circles}\label{sec:GluingTheorem}

\subsection{Interval gluing lemma}

Let $F$ be a sutured surface with all components of $S_+$ outgoing, i.e. such that $F$ corresponds to a morphism in $\Cob^{\ext}$ from $\varnothing$ to $M$ for some object $M$. Let $I_1$ and $I_2$ be two interval components of $S_+$ containing points $p_1$ and $p_2$ of $P$ and having associated maps $E_1$ and $E_2$. Let $\overline{F}$ be the result of gluing $I_1$ and $I_2$ together compatibly with the orientations.

\begin{lemma}\label{lem:IntervalGluing}[cf. Lemma 4.1 of \cite{ManionDHA}, Lemma 3.1 of \cite{ManionDHASigns}]
For any $A \in \Q$ and any $j \in \{1/2, 1\}$ such that $\pi_j(F)$ is defined, it follows that $\pi_j(\overline{F})$ is also defined and we have an isomorphism
\[
\Zb^P_{\delta_A,\pi_j}(\overline{F}) \cong \frac{\Zb^P_{\delta_A,\pi_j}}{\im(E_1 + E_2)}
\]
of $\Q$-graded super abelian groups, compatible with the left actions of $\Z[E]/(E^2)$ for interval components of $S_+$ that are neither $I_1$ nor $I_2$ and $\Z\langle E,F \rangle / (E^2, F^2, EF + FE)$ for circle components of $S_+$. Over $\F_2$ we have an analogous isomorphism with no requirement on well-definedness of $\pi_j$.
\end{lemma}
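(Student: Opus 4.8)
The plan is to adapt the proofs of \cite[Lemma 4.1]{ManionDHA} and \cite[Lemma 3.1]{ManionDHASigns}, whose structure carries over once two new features are accounted for: the additional ``circle'' operators $E_C$ and $F_C$ for $S_+$ circles must be carried along the isomorphism, and the relevant homology group is now $H_1(F,P)$ rather than $H_1(F,S_+)$.

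First I would record the purely algebraic form of $E_1+E_2$. By Definition~\ref{def:EActions}, on $\Zb^P_{\delta_A,\pi_j}(F)$ the operator $E_i$ equals $(-1)^{\pi_j(F)}$ times the Koszul contraction $\iota_{\phi_{I_i}}$ on $\wedge^* H_1(F,P)$, and contraction is linear in the functional, so $E_1+E_2=(-1)^{\pi_j(F)}\iota_{\phi_{I_1}+\phi_{I_2}}$ and $\im(E_1+E_2)=\im(\iota_{\phi_{I_1}+\phi_{I_2}})$. One checks that $\phi_{I_1}+\phi_{I_2}\colon H_1(F,P)\to\Z$ is either identically zero or surjective (it only sees the components of $F$ containing $p_1$ or $p_2$, and a suitable edge of $\Gamma_F$ incident to $p_1$ pairs to $\pm1$ with it). If it is zero then $\im(E_1+E_2)=0$; if it is surjective, a choice of class $v$ with $(\phi_{I_1}+\phi_{I_2})(v)=1$ gives a null-homotopy $v\wedge-$ for $\iota_{\phi_{I_1}+\phi_{I_2}}$, so $\coker(\iota_{\phi_{I_1}+\phi_{I_2}})\cong\wedge^*\ker(\phi_{I_1}+\phi_{I_2})$ but with the exterior (hence $\Q$- and parity-) grading shifted up by $1$.

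Next I would supply the topological identification. Let $\rho\colon F\to\overline F$ be the quotient identifying $I_1$ with $I_2$, put $q=\rho(p_1)=\rho(p_2)$ and $\overline P=\rho(P)\setminus\{q\}=P\setminus\{p_1,p_2\}$, a legitimate choice of basepoints for $\overline F$. A homological argument as in the cited lemmas (concretely, using the adapted basis of Section~\ref{sec:ChoosingBases} with $\Gamma_F$ chosen so that $p_1$ and $p_2$ are leaves or isolated vertices) identifies $\rho_*$ as an isomorphism from $\ker(\phi_{I_1}+\phi_{I_2})$ onto $H_1(\overline F,\overline P)$ when $\phi_{I_1}+\phi_{I_2}\neq0$, and from all of $H_1(F,P)$ onto $H_1(\overline F,\overline P)$ when $\phi_{I_1}+\phi_{I_2}=0$; an Euler-characteristic count confirms the ranks. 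Combining with the previous paragraph yields an isomorphism of $\Q$-graded super abelian groups between $\Zb^P_{\delta_A,\pi_j}(F)/\im(E_1+E_2)$ and $\left(\Z^{0|1}\right)^{\otimes s}\otimes\wedge^*H_1(\overline F,\overline P)\{\delta_A(F)+s\}$, where $s\in\{0,1\}$ is $1$ exactly when $\phi_{I_1}+\phi_{I_2}\neq0$ (equivalently when the gluing raises the rank of $H_1(F,S_+)$). It then remains to check $\delta_A(\overline F)=\delta_A(F)+s$ and $\pi_j(\overline F)=\pi_j(F)+s$; the second is the mod-$2$ reduction of the first, the fact that $\pi_j(\overline F)$ is defined was noted before the definition of $\Cob^{\ext}_{\cpp}$, and the first is a short computation with \eqref{eq:DeltaADef} and Euler characteristics, carried out in the two cases ($I_1,I_2$ on the same, or on different, components of $F$). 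For the compatibility statement: the remaining operators $E_X$ ($X\neq I_1,I_2$) and $E_C,F_C$ ($C$ a circle) are contractions against $\phi_X$, $\phi_C$ or wedging with $[C]$, all carried by $\rho_*$ to the corresponding data on $\overline F$ and (super)commuting with $\iota_{\phi_{I_1}+\phi_{I_2}}$ (using $\phi_{I_i}([C])=0$), so they descend through $\im(E_1+E_2)$ and match under the isomorphism.

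The main obstacle is the sign and parity bookkeeping over $\Z$: one must check that the Koszul signs in $\iota$, the twists $(-1)^{\pi_j(F)}$ and $(-1)^{\pi_j(\overline F)}$ built into the outgoing operators, the parity-shift factors $\left(\Z^{0|1}\right)^{\otimes\pi_j}$, and the degree-$1$ shift from the null-homotopy all combine so that the constructed map is genuinely an isomorphism of \emph{super} abelian groups intertwining the \emph{odd} operators $E$ and $F$. This is the analogue of the step where \cite{ManionDHASigns} upgrades \cite{ManionDHA} to incorporate signs; everything else is routine homology and Euler-characteristic arithmetic. Over $\F_2$ all signs and the parity shift disappear, and the argument applies verbatim with no hypothesis on $\pi_j$.
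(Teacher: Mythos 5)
Your proposal takes a genuinely different route from the paper. The paper's proof of Lemma~\ref{lem:IntervalGluing} is almost entirely a case-by-case analysis (Cases 1-2, 1-3, 2-1a, 2-1b, 2-2, inherited from the cited lemmas), in each case choosing a basis adapted to $I_1$ and $I_2$, writing down $E_1+E_2$ and the circle operators $E_C$, $F_C$ explicitly on that basis, and verifying the sign $(-1)^{\pi(\overline F)} = (-1)^{\pi(F)+1}$ by hand. Your approach instead treats $E_1+E_2$ uniformly as $(-1)^{\pi_j(F)}\iota_{\phi_{I_1}+\phi_{I_2}}$ and uses the Koszul null-homotopy to identify $\coker(\iota_{\phi_{I_1}+\phi_{I_2}})$ with $\wedge^*\ker(\phi_{I_1}+\phi_{I_2})$ (degree-shifted), then uses $\rho_*$ for the topological identification. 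This is a cleaner conceptual picture: it isolates the two dichotomies (is $\phi_{I_1}+\phi_{I_2}$ zero or surjective; does the glued boundary circle close up or not) and, in the surjective case, unifies all of the paper's subcases into one contraction argument. The observation $\phi_{I_i}([C])=0$ handling the new $F_C$ intertwining is the right one, and it replaces the paper's per-case calculations.

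That said, the proposal has a real gap in exactly the place the paper works hardest. You write that ``the main obstacle is the sign and parity bookkeeping over $\Z$'' and then assert without verification that the Koszul sign from $\iota_{\phi_X}(v\wedge\omega)=\phi_X(v)\omega-v\wedge\iota_{\phi_X}\omega$ (or $[C]\wedge(v\wedge\omega)=-v\wedge([C]\wedge\omega)$), the factor $(-1)^{\pi_j(F)}$ versus $(-1)^{\pi_j(\overline F)}$, and the parity-shift tensor factor all combine correctly. They do --- because $\omega\in\wedge^*\ker$ already dies in the quotient, only the $-v\wedge(\cdot)$ term survives, and that minus sign is precisely absorbed by $\pi_j(\overline F)=\pi_j(F)+1$ --- but you do not say this, and it is not ``routine'' in the sense of needing no argument: it is the content of the lemma. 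Similarly, the identities $\delta_A(\overline F)=\delta_A(F)+s$ and ``$\rho_*$ is an isomorphism from $\ker(\phi_{I_1}+\phi_{I_2})$ onto $H_1(\overline F,\overline P)$'' are asserted with a pointer to ``Euler-characteristic counts,'' but the $\delta_A$ verification genuinely splits into more than the ``two cases'' you name (compare equations \eqref{it:Req1}--\eqref{it:Req9} in Section~\ref{sec:GradingsParities}), and the identification of $\rho_*$ needs the adapted bases of Section~\ref{sec:ChoosingBases} to be nailed down, not just a rank count. Your scheme is a better-organized plan than the paper's, and would shorten the write-up if executed; as written it is an outline that defers the verification that constitutes the proof.

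One small correction: a cleaner characterization of your $s$ is that $s=0$ exactly when $I_1$ and $I_2$ are the only $S_+$ components among the components of $F$ they touch (both ``alone''); phrasing it as ``the gluing raises the rank of $H_1(F,S_+)$'' risks confusion, since in the ``alone on the same component'' case the rank of $H_1(F,S_+)$ can change for reasons unrelated to $s$ (a basis arc closing into a circle), and matching your $s$ to the paper's $\delta_A$-bookkeeping requires tracking the other terms of \eqref{eq:DeltaADef}, not just $h$.
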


\begin{proof}
Well-definedness of $\pi_j(\overline{F})$ for $j = 1$ follows from the fact that interval gluing does not change the number of $S_+$ circles. For $j = 1/2$ it follows because while gluing $I_1$ to $I_2$ decreases the number of $S_+$ intervals by two, it also increases or decreases the number of boundary components of $F$ intersecting $S_-$ by one. 

For the isomorphism, the proof largely follows \cite[proof of Lemma 3.1]{ManionDHASigns}; we just note the modifications required to adapt that proof to the current setting. Let $\delta := \delta_A$ and $\pi := \pi_j$. 

\smallskip

\noindent \textbf{Case 1-2: only $I_2$ is alone.} When we show the isomorphism intertwines the remaining actions, there are now more types of remaining actions that could exist ($E$ and $F$ for circles along with $E$ for intervals). The argument in \cite{ManionDHASigns} for an interval $E$ action works equally well for circle $E$ actions. If $C$ is a circle component of $S_+$ with corresponding $F$-endomorphism $F_C$, then in the notation of \cite[proof of Lemma 3.1]{ManionDHASigns} we have
\[
F_C(\varepsilon_F \otimes (e \wedge \omega')) = (-1)^{\pi(F)} \varepsilon_F \otimes ([C] \wedge e \wedge \omega') = (-1)^{\pi(F) + 1} \varepsilon_F \otimes (e \wedge [C] \wedge \omega')
\]
in $\Zb^P_{\delta,\pi}(F)$. Under our isomorphism, this element gets sent to 
\[
(-1)^{\pi(F) +1 } \varepsilon_{\overline{F}} \otimes ([C] \wedge \omega')
\]
in $\Zb^P_{\delta,\pi}(\overline{F})$, which is also the result of acting with $F_C$ on $\varepsilon_{\overline{F}} \otimes ([C] \wedge \omega')$ because $\pi(\overline{F}) = \pi(F) + 1$.

\smallskip

\noindent \textbf{Case 1-3: neither $I_1$ nor $I_2$ is alone.} Again we must consider the intertwining property of our isomorphism with respect to more types of actions. Circle $E$ actions follow from the same proof as for interval $E$ actions. For a circle $C$ in $S_+$ with corresponding $F$-endomorphism $F_C$, we have
\[
F_C(\varepsilon_F \otimes (e_1 \wedge e_2 \wedge \omega')) = (-1)^{\pi(F) + 2} \varepsilon_F \otimes (e_1 \wedge e_2 \wedge [C] \wedge \omega')
\]
and
\[
F_C(\varepsilon_F \otimes (e_1\wedge \omega')) = (-1)^{\pi(F) + 1} \varepsilon_F \otimes (e_1 \wedge [C] \wedge \omega').
\]
Meanwhile, if $F_{\overline{C}}$ denotes the analogous endomorphism in the $\overline{F}$ case, we have
\[
F_{\overline{C}}(\varepsilon_{\overline{F}} \otimes (e \wedge \omega')) = (-1)^{\pi(\overline{F}) + 1} \varepsilon_{\overline{F}} \otimes (e \wedge [C] \wedge \omega')
\]
and
\[
F_{\overline{C}}(\varepsilon_{\overline{F}} \otimes \omega') = (-1)^{\pi(\overline{F})} \varepsilon_{\overline{F}} \otimes ([C] \wedge \omega').
\]
The intertwining property follows from $\pi(\overline{F}) = \pi(F) + 1$.

\smallskip

\noindent \textbf{Case 2-1a: $I_1$ and $I_2$ are not alone in their component of $F$.} If $C$ is a circle component of $S_+$ (whether or not it contains $q$), then
\[
F_C(\varepsilon_F \otimes (e_1 \wedge e_2 \wedge \omega')) = (-1)^{\pi(F) + 2} \varepsilon_F \otimes (e_1 \wedge e_2 \wedge [C] \wedge \omega')
\]
and
\[
F_C(\varepsilon_F \otimes (e_1 \wedge \omega')) = (-1)^{\pi(F) + 1} \varepsilon_F \otimes (e_1 \wedge [C] \wedge \omega').
\]
On the $\overline{F}$ side, we have
\[
F_C(\varepsilon_{\overline{F}} \otimes (e \wedge \omega')) = (-1)^{\pi(\overline{F}) + 1} \varepsilon_F \otimes (e \wedge [C] \wedge \omega')
\]
and
\[
F_C(\varepsilon_{\overline{F}} \otimes \omega') = (-1)^{\pi(\overline{F})} \varepsilon_F \otimes ([C] \wedge \omega').
\]
The intertwining property again follows from $\pi(\overline{F}) = \pi(F) + 1$. 

\smallskip

\noindent \textbf{Cases 2-1b and 2-2:} Arguments as in the above cases continue to work here.
\end{proof}

\subsection{Circle gluing lemma}

As in the previous section, let $F$ be a sutured surface with all components of $S_+$ outgoing, i.e. such that $F$ corresponds to a morphism in $\Cob^{\ext}$ from $\varnothing$ to $M$ for some object $M$. Let $C_1$ and $C_2$ be two circle components of $S_+$ containing points $p_1$ and $p_2$ of $P$ and having associated maps $E_1, F_1$ and $E_2, F_2$. Let $\overline{F}$ be the result of gluing $C_1$ and $C_2$ together compatibly with the orientations.

\begin{lemma}\label{lem:CircleGluing}
For any $A \in \Q$ and any $j \in \{1/2, 1\}$ such that $\pi_j(F)$ is defined, it follows that $\pi_j(\overline{F})$ is also defined and we have an isomorphism
\[
\Zb^P_{\delta_A,\pi_j}(\overline{F}) \cong \frac{\Zb^P_{\delta_A,\pi_j}}{\im(E_1 + E_2)+\im(F_1 + F_2)}
\]
of $\Q$-graded super abelian groups, compatible with the left actions of $\Z[E]/(E^2)$ for interval components of $S_+$ and $\Z\langle E,F \rangle / (E^2, F^2, EF + FE)$ for circle components that are neither $C_1$ nor $C_2$. Over $\F_2$ we have an analogous isomorphism with no requirement on well-definedness of $\pi_j$.
\end{lemma}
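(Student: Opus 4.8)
The plan is to mirror the proof of Lemma~\ref{lem:IntervalGluing} (hence of \cite[Lemma 3.1]{ManionDHASigns} and \cite[Lemma 4.1]{ManionDHA}), viewing a circle gluing as the simultaneous imposition of the ``arc relation'' $\im(E_1 + E_2)$ that already appears in the interval case together with a new ``circle relation'' $\im(F_1 + F_2)$. First I would handle the well-definedness and grading bookkeeping: gluing $C_1$ to $C_2$ removes both circles from $S_+$, so it lowers the number of $S_+$ circles by two while leaving unchanged the number of $S_+$ intervals and the number of boundary components meeting $S_-$ nontrivially; hence membership in $\Cob^{\ext}_{\opp}$ or $\Cob^{\ext}_{\cpp}$ is preserved, $\pi_j(\overline F)$ is defined whenever $\pi_j(F)$ is, and the shifts $\delta_A(\overline F) - \delta_A(F)$ and $\pi_j(\overline F) - \pi_j(F)$ can be read off directly from \eqref{eq:DeltaADef}. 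As in the interval case these shifts depend on whether $C_1$ and $C_2$ lie in the same component of $F$ and on whether one of them is the only $S_+$ component, or the only boundary component, of its component of $F$; the resulting case list is the same as in the proof of Lemma~\ref{lem:IntervalGluing}.

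The homological input is as follows. Reduce $F$ to a disjoint union of standard sutured surfaces and choose a basis of $H_1(F,P)$ as in Section~\ref{sec:ChoosingBases}, arranged so that it contains circles $\sigma_1,\sigma_2$ with $[\sigma_i] = [C_i]$, arcs $e_1,e_2$ of $\Gamma_F$ ending at $p_1,p_2$, and (where possible) a ``chosen omitted'' boundary component distinct from $C_1$ and $C_2$. Under the quotient map $\rho\colon F \to \overline F$ the points $p_1,p_2$ become interior points of $\overline F$, so $\overline P = P \setminus \{p_1,p_2\}$; a relative $1$-cycle mod $P$ descends to one mod $\overline P$ exactly when the boundary coefficients at $p_1$ and $p_2$ sum to zero, and the two copies $C_1, C_2$ of the gluing circle become homologous in $\overline F$ with opposite induced orientations. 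These two facts give an isomorphism
\[
\frac{\ker(\phi_{C_1} + \phi_{C_2})}{\Z \cdot ([C_1] + [C_2])} \ \xrightarrow{\ \sim\ }\ H_1(\overline F,\overline P)
\]
induced by $\rho_*$ (the combination $[C_1] + [C_2]$, rather than $[C_1] - [C_2]$, being forced by the orientation conventions of Definition~\ref{def:FActions}, so that it matches $F_1 + F_2$). Since $E_i$ is contraction against $\phi_{C_i}$ and $F_i$ is wedging with $[C_i]$ (up to signs and the $(\Z^{0|1})^{\otimes \pi_j(F)}$ factor), the standard identities $\wedge^* V / \im(\iota_\psi) \cong \wedge^*(\ker \psi)$ and $\wedge^* W / \im(\tau \wedge -) \cong \wedge^*(W / \Z \tau)$, applied with $\psi = \phi_{C_1} + \phi_{C_2}$ and $\tau = [C_1] + [C_2] \in \ker\psi$ and with the appropriate degree shifts, upgrade the displayed isomorphism to a graded isomorphism $\wedge^* H_1(F,P) / (\im(E_1 + E_2) + \im(F_1 + F_2)) \cong \wedge^* H_1(\overline F,\overline P)$; absorbing the degree and parity shifts from the first step (which degenerate harmlessly when $E_1 + E_2$ or $F_1 + F_2$ happens to act as zero, e.g.\ when $C_1, C_2$ are the only boundary circles of their component) yields the asserted isomorphism of $\Q$-graded super abelian groups, and over $\F_2$ all parity factors drop out.

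It then remains to check that this isomorphism intertwines the residual algebra actions: the $E$-action for each remaining $S_+$ interval and the $E$- and $F$-actions for each $S_+$ circle other than $C_1$ and $C_2$. For this I would run through the same case list as in the proof of Lemma~\ref{lem:IntervalGluing}, which already spells out the behaviour of the $F_C$-endomorphisms under gluing and tracks the $(-1)^{\pi(F)}$ signs through the relation between $\pi_j(\overline F)$ and $\pi_j(F)$; the arguments there apply essentially verbatim.

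The step I expect to be the main obstacle is the sign bookkeeping introduced by the new relation $\im(F_1 + F_2)$: one must verify that $\im(E_1 + E_2)$ and $\im(F_1 + F_2)$ sit inside $\wedge^* H_1(F,P)$ in the expected relative position, so that the two successive quotients and the identification of the result with $\wedge^* H_1(\overline F,\overline P)$ are valid over $\Z$ rather than merely after inverting $2$, and that the orientation choices for $C_1$, $C_2$, the arcs $e_1,e_2$, and the chosen omitted circle combine in every case to produce the combination $[C_1] + [C_2]$ and the correct degree and parity shift. Since Lemma~\ref{lem:IntervalGluing} already supplies the parallel case structure and the treatment of the $F_C$-endomorphisms for non-glued circles, the genuinely new work is confined to this relation.
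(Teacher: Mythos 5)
Your proposal is mathematically sound but takes a genuinely different, more conceptual route than the paper. The paper's proof proceeds by a lengthy case analysis (distinguishing whether $C_1,C_2$ lie in the same or different components of $F$, whether each is the only $S_+$ component of its component, and whether the relevant components meet $S_-$), choosing explicit bases as in Section~\ref{sec:ChoosingBases} in each case and matching basis elements of the quotient with basis elements of $\Zb^P_{\delta,\pi}(\overline{F})$ by hand, carefully tracking signs. You instead isolate a single homological statement that unifies all these cases: $\rho_*$ induces an isomorphism
\[
\frac{\ker(\phi_{C_1}+\phi_{C_2})}{\Z\cdot([C_1]+[C_2])} \xrightarrow{\ \sim\ } H_1(\overline{F},\overline{P}),
\]
and then the exterior-algebra quotient identities do the rest. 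This is correct, and in fact the various branches in the paper's proof correspond exactly to the dichotomies in your argument: $\phi_{C_1}+\phi_{C_2}$ is either zero (when $C_1,C_2$ are the only $S_+$ components meeting their components of $F$, which is the cases 1-1, 2-1, 2-2 with $\delta$ shift $0$) or surjective (cases 1-2, 1-3, 2-3 with $\delta$ shift $+1$, matching the degree shift produced by the contraction quotient $\wedge^*V/\im\iota_\psi \cong e_0\wedge\wedge^*\ker\psi$), and $[C_1]+[C_2]$ is either zero (when $C_1\cup C_2$ is the entire boundary of their union of components) or primitive, so the constructions work over $\Z$ without inverting $2$. Your approach buys conceptual clarity and makes the consistency of the degree/parity shifts transparent; the paper's approach buys directness (no need to first prove the homological isomorphism) and makes every sign explicit, which matters since the main technical difficulty over $\Z$ is precisely the sign bookkeeping.

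Two points where you would need to supply real work, as you partly acknowledge. First, the displayed isomorphism of $H_1$ groups is asserted rather than proved; it requires showing that $\rho_*\colon H_1(F,P)\to H_1(\overline{F},\overline{P}\cup\{p^*\})$ carries $\ker(\phi_{C_1}+\phi_{C_2})$ into the image of the injection $H_1(\overline{F},\overline{P})\hookrightarrow H_1(\overline{F},\overline{P}\cup\{p^*\})$ surjectively with kernel exactly $\Z([C_1]+[C_2])$; this is true but is not a black-box fact. Second, ``the arguments there apply essentially verbatim'' for the intertwining is a bit optimistic: the paper's Cases 1-2b and 2-3 contain a genuinely new subtlety, namely that a circle $C\notin\{C_1,C_2\}$ in the same component as one of the glued circles may fail to be a basis circle, so that its expansion in basis circles involves $[C_1]$ or $[C_1]+[C_2]$, and one must check the offending terms either vanish in the quotient or match the changed basis expansion of $[C]$ in $\overline{F}$. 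In your framework this follows from naturality of $\rho_*$ together with $\phi_X(\tau)=0$ and $[C]\in\ker(\phi_{C_1}+\phi_{C_2})$, but it is not literally the interval-gluing argument, and since the Koszul-sign discrepancy in moving $[C]$ or $\phi_X$ past the auxiliary vector $e_0$ must cancel against the $(-1)^{\pi(F)}$ conventions of Definitions~\ref{def:EActions} and~\ref{def:FActions} and the parity jump $\pi(\overline{F})=\pi(F)+1$, this is exactly where the paper's explicit computations earn their keep.
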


\begin{proof}

Well-definedness of $\pi_j(\overline{F})$ when $j=1$ follows because gluing along $S_+$ circles does not change the number of $S_+$ circles modulo 2. When $j = 1/2$ it follows because gluing along $S_+$ circles changes neither the number of $S_+$ intervals nor the number of boundary components of $F$ intersecting $S_-$ nontrivially.  

For the isomorphism, let $\delta := \delta_A$ and $\pi := \pi_j$. We will consider various cases.

\smallskip

\noindent \textbf{Case 1: $C_1$ and $C_2$ are on distinct components of $F$.} Choose a $\Z$-basis for $\Zb^P_{\delta,\pi}(F)$ as in Section~\ref{sec:ChoosingBases}; without loss of generality we may assume that $C_1$ and $C_2$ are each incident with at most one basis arc. 

\smallskip
\noindent \textbf{Case 1-1: $C_1$ and $C_2$ are both alone.} Assume that $C_1$ and $C_2$ are each the unique component of $S_+$ in their component of $F$.

\smallskip
\noindent \textbf{Case 1-1a: there are no $S_-$ circles in the same components of $F$ as $C_1$ or $C_2$.} In this case the gluing produces a closed component of $F$. The endomorphisms $E_1$, $E_2$, $F_1$, and $F_2$ are each zero individually, so $E_1 + E_2$ and $F_1 + F_2$ are zero. The gluing changes the quantities relevant for $\delta$ as follows:
\begin{itemize}
    \item The number of no-$S_-$ non-closed components decreases by $2$.
    \item The number of closed components increases by $1$.
    \item The number of $S_+$ circles decreases by $2$.
\end{itemize}
The resulting change in $\delta$ is
\[
-2A + (2A - 1) - 2(-1/2) = 0,
\]
so the change in $\pi$ is also zero. The statement now follows from $H_1(F,P) \cong H_1(\overline{F}, \overline{P})$; the intertwining property holds because the components of $F$ containing $C_1$ and $C_2$ intersect no other components of $S_+$. 

\smallskip
\noindent \textbf{Case 1-1b: there is an $S_-$ circle in the component with $C_1$, but not in the component with $C_2$.} The gluing changes the quantities relevant for $\delta$ as follows:
\begin{itemize}
    \item The number of no-$S_+$ non-closed components increases by $1$.
    \item The number of no-$S_-$ non-closed components decreases by $1$.
    \item The number of $S_+$ circles decreases by $2$.
\end{itemize}
The resulting change in $\delta$ is
\[
(A-1) -(A) -2(-1/2) = 0,
\]
so the change in $\pi$ is also zero.

When choosing bases, we can ensure that $[C_1]$ is a basis circle. Before gluing, basis elements of $\Zb^P_{\delta,\pi}(F)$ are either $\varepsilon_F \otimes ([C_1] \wedge \omega')$ or $\varepsilon_F \otimes \omega'$ for some wedge product $\omega'$ of basis arcs and circles not divisible by $[C_1]$.

The maps $E_1$ and $E_2$ are both zero individually, so $E_1 + E_2 = 0$. For $F_1$, we have
\begin{itemize}
\item $F_1(\varepsilon_F \otimes ([C_1] \wedge \omega')) = 0$;
\item $F_1(\varepsilon_F \otimes \omega') = (-1)^{\pi(F)} \varepsilon_F \otimes ([C_1] \wedge \omega')$.
\end{itemize}
Since $F_2$ is the zero map, we have $F_1 + F_2 = F_1$ and we are taking the quotient of $\Zb^P_{\delta,\pi}(F)$ by the image of $F_1$. A basis for the quotient is given by the elements $\varepsilon_F \otimes \omega'$ with $\omega'$ not divisible by $[C_1]$. These same elements give a basis for $\Zb^P_{\delta,\pi}(\overline{F})$, and we have a bijection sending $\varepsilon_F \otimes \omega'$ for $F$ to $\varepsilon_F \otimes \omega'$ for $\overline{F}$. Thus, we have an isomorphism of $\Q$-graded super abelian groups; the intertwining property holds because the components of $F$ containing $C_1$ and $C_2$ intersect no other components of $S_+$.

The case where there is an $S_-$ circle in the component with $C_2$ but not in the component with $C_1$ is analogous to this case, so we will not consider it separately.

\smallskip
\noindent \textbf{Case 1-1c: there is an $S_-$ circle in both the component with $C_1$ and the component with $C_2$.} The gluing changes the quantities relevant for $\delta$ as follows:
\begin{itemize}
    \item $h$ increases by $1$ (because $[C_1]$ becomes nonzero in $H_1(\overline{F},S_+)$).
    \item The number of no-$S_+$ non-closed components increases by $1$.
    \item The number of $S_+$ circles decreases by $2$.
\end{itemize}
The resulting change in $\delta$ is
\[
(-A) + (A-1) +2(-1/2) = 0,
\]
so the change in $\pi$ is also zero.

When choosing bases, we can ensure that both $[C_1]$ and $[C_2]$ are basis circles. Before gluing, some other boundary circle in the component of $F$ containing $C_1$ was not a basis circle, and similarly for the component containing $C_2$. After gluing, the glued component now has two boundary circles that were not basis circles, but instead of making one of them into a basis circle we will let $[C_1]$ be a basis circle (modulo the other basis circles $[C_1]$ is homologous to either of the non-basis boundary circles up to sign). 

Before gluing, basis elements of $\Zb^P_{\delta,\pi}(F)$ can take the following forms:
\begin{itemize}
    \item $\varepsilon_F \otimes ([C_1] \wedge [C_2] \wedge \omega')$,
    \item $\varepsilon_F \otimes ([C_1] \wedge \omega')$,
    \item $\varepsilon_F \otimes ([C_2] \wedge \omega')$,
    \item $\varepsilon_F \otimes \omega'$
\end{itemize}
where $\omega'$ is divisible by neither $[C_1]$ nor $[C_2]$. The maps $E_1$ and $E_2$ are both zero individually; we have
\begin{itemize}
    \item $F_1(\varepsilon_F \otimes ([C_1] \wedge [C_2] \wedge \omega')) = 0$,
    \item $F_1(\varepsilon_F \otimes ([C_1] \wedge \omega')) = 0$,
    \item $F_1(\varepsilon_F \otimes ([C_2] \wedge \omega')) = (-1)^{\pi(F)} \varepsilon_F \otimes ([C_1] \wedge [C_2] \wedge \omega')$,
    \item $F_1(\varepsilon_F \otimes \omega') = (-1)^{\pi(F)} \varepsilon_F \otimes ([C_1] \wedge \omega')$
\end{itemize}
and
\begin{itemize}
    \item $F_2(\varepsilon_F \otimes ([C_1] \wedge [C_2] \wedge \omega')) = 0$,
    \item $F_2(\varepsilon_F \otimes ([C_1] \wedge \omega')) = (-1)^{\pi(F) + 1} \varepsilon_F \otimes ([C_1] \wedge [C_2] \wedge \omega')$,
    \item $F_2(\varepsilon_F \otimes ([C_2] \wedge \omega')) = 0$,
    \item $F_2(\varepsilon_F \otimes \omega') = (-1)^{\pi(F)} \varepsilon_F \otimes ([C_2] \wedge \omega')$.
\end{itemize}
Thus, in the quotient by $\im(F_1 + F_2)$, we have $\varepsilon_F \otimes ([C_1] \wedge [C_2] \wedge \omega') = 0$ and
\[
\varepsilon_F \otimes ([C_2] \wedge \omega') = -\varepsilon_F \otimes ([C_1] \wedge \omega').
\]
A basis for the quotient is given by the elements $\varepsilon_F \otimes \omega'$ and $\varepsilon_F \otimes ([C_1] \wedge \omega')$. By construction, these basis elements naturally correspond to our basis elements for $\Zb^P_{\delta,\pi}(\overline{F})$, and we have a bijection sending $\varepsilon_F \otimes \omega'$ for $F$ to $\varepsilon_F \otimes \omega'$ for $\overline{F}$ and sending $\varepsilon_F \otimes ([C_1] \wedge \omega')$ for $F$ to $\varepsilon_F \otimes ([C_1] \wedge \omega')$ for $\overline{F}$. Thus, we have an isomorphism of $\Q$-graded super abelian groups; the intertwining property holds for the same reason as in the previous cases.

\smallskip
\noindent \textbf{Case 1-2: only $C_2$ is alone.}  Now assume that $C_2$ is the unique component of $S_+$ in its component of $F$ but that $C_1$ has another component of $S_+$ in its component of $F$.

\smallskip

\noindent \textbf{Case 1-2a: there are no $S_-$ circles in the same component as $C_2$.} The gluing changes the quantities relevant for $\delta$ as follows, whether or not the component of $F$ containing $C_1$ intersects $S_-$ nontrivially:
\begin{itemize}
    \item $h$ decreases by $1$.
    \item The number of no-$S_-$ non-closed components decreases by $1$.
    \item The number of $S_+$ circles decreases by $2$.
\end{itemize}
The resulting change in $\delta$ is
\[
-(-A)-(A)-2(-1/2) = +1,
\]
so $\pi$ also changes by $1$.

When choosing bases, we can ensure that $C_1$ is a basis circle and that there is a unique basis arc $e_1$ incident with $C_1$. Orient $e_1$ so that it points from the surface into $C_1$. Basis elements for $\Zb^P_{\delta,\pi}(F)$ can take the following forms:
\begin{itemize}
    \item $\varepsilon_F \otimes (e_1 \wedge [C_1] \wedge \omega')$,
    \item $\varepsilon_F \otimes ([C_1] \wedge \omega')$,
    \item $\varepsilon_F \otimes (e_1 \wedge \omega')$,
    \item $\varepsilon_F \otimes \omega'$.
\end{itemize}
We have
\begin{itemize}
    \item $E_1(\varepsilon_F \otimes (e_1 \wedge [C_1] \wedge \omega')) = (-1)^{\pi(F)}\varepsilon_F \otimes ([C_1] \wedge \omega')$,
    \item $E_1(\varepsilon_F \otimes ([C_1] \wedge \omega')) = 0$,
    \item $E_1(\varepsilon_F \otimes (e_1 \wedge \omega')) = (-1)^{\pi(F)} \varepsilon_F \otimes \omega'$,
    \item $E_1(\varepsilon_F \otimes \omega') = 0$.
\end{itemize}
The map $E_2$ is zero, so in the quotient by $\im(E_1+E_2)$ we set the basis elements of the form $\varepsilon_F \otimes \omega'$ and $\varepsilon_F \otimes ([C_1] \wedge \omega')$ to zero.

The endomorphisms $F_1$ and $F_2$ of $\Zb^{P}_{\delta,\pi}(F)$ descend to endomorphisms of the quotient by $\im(E_1 + E_2)$; we have $F_2 = 0$ and
\begin{itemize}
\item $F_1(\varepsilon_F \otimes (e_1 \wedge [C_1] \wedge \omega')) = 0$,
\item $F_1(\varepsilon_F \otimes (e_1 \wedge \omega')) = (-1)^{\pi(F) + 1} \varepsilon_F \otimes (e_1 \wedge [C_1] \wedge \omega')$.
\end{itemize}
When we take the further quotient by $\im(F_1 + F_2)$, we set basis elements of the form $\varepsilon_F \otimes (e_1 \wedge [C_1] \wedge \omega')$ to zero, and we are left with basis elements of the form $\varepsilon_F \otimes (e_1 \wedge \omega')$.

Meanwhile, in $\overline{F}$, neither $e_1$ or $[C_1]$ is a basis arc or basis circle anymore; a basis is given by elements $\varepsilon_{\overline{F}} \otimes \omega'$ where $\omega'$ runs over the same wedge products as above (not divisible by $e_1$ or $[C_1]$). We have a bijection between the bases for the quotient and for $\Zb^P_{\delta,\pi}(\overline{F})$ given by
\[
\varepsilon_F \otimes (e_1 \wedge \omega') \leftrightarrow \varepsilon_{\overline{F}} \otimes \omega'.
\]
Because $\delta$ and $\pi$ are each one more for $\overline{F}$ than they are for $F$, this bijection gives an isomorphism of $\Q$-graded super abelian groups.

To see that this bijection intertwines the remaining actions of $E$ generators, let $X \notin \{C_1,C_2\}$ be an interval or circle component of $S_+$. First assume that $e_1$ is not incident with $X$; in $\frac{\Zb^{P}_{\delta,\pi}(F)}{\im(E_1 + E_2) + \im(F_1 + F_2)}$ we have
\[
E_X(\varepsilon_F \otimes (e_1 \wedge \omega')) = (-1)^{\pi(F) + 1} \varepsilon_F \otimes (e_1 \wedge E_X(\omega')),
\]
while in $\Zb^P_{\delta,\pi}(\overline{F})$ we have
\[
E_X(\varepsilon_{\overline{F}} \otimes \omega') = (-1)^{\pi(\overline{F})} \varepsilon_{\overline{F}} \otimes E_X(\omega').
\]
These elements are identified under our bijection because $\pi(\overline{F}) = \pi(F) + 1$. If $e_1$ is incident with $X$, the extra ``remove $e_1$'' term we would get in $E_X(\varepsilon_F \otimes (e_1 \wedge \omega'))$ is zero in $\frac{\Zb^{P}_{\delta,\pi}(F)}{\im(E_1 + E_2)}$ even without the additional quotient by $\im(F_1 + F_2)$, so the analysis is unchanged.

To see that the bijection intertwines the remaining actions of $F$ generators, let $C \notin \{C_1,C_2\}$ be a circle component of $S_+$. If $C$ is not in the same component of $F$ as $C_1$, then whether or not $C$ is a basis circle, the expansion of $[C]$ in terms of basis circles does not involve $[C_1]$, and we have
\[
F_C(\varepsilon_F \otimes (e_1 \wedge \omega')) = (-1)^{\pi(F) + 1} \varepsilon_F \otimes (e_1 \wedge ([C] \wedge \omega')) \in \frac{\Zb^{P}_{\delta,\pi}(F)}{\im(E_1 + E_2) + \im(F_1 + F_2)}.
\]
We also have
\[
F_C(\varepsilon_{\overline{F}} \otimes \omega') = (-1)^{\pi(\overline{F})} \varepsilon_{\overline{F}} \otimes ([C] \wedge \omega') \in \Zb^P_{\delta,\pi}(\overline{F});
\]
these elements are identified under our bijection. If $C$ is in the same component of $F$ as $C_1$ but $C$ is also a basis circle, the argument is unchanged. If $C$ is in the same component of $F$ as $C_1$ and is not a basis circle, then taking the wedge products of our four types of basis elements for $\Zb^P_{\delta,\pi}(F)$ with $[C]$, there is an extra term that multiplies by $[C_1]$ rather than changing $\omega'$. However, this extra term vanishes in $\frac{\Zb^{P}_{\delta,\pi}(F)}{\im(F_1 + F_2)}$ and thus in $\frac{\Zb^{P}_{\delta,\pi}(F)}{\im(E_1 + E_2) + \im(F_1 + F_2)}$. In $\Zb^P_{\delta,\pi}(\overline{F})$ the basis expansion of $[C]$ is missing the $[C_1]$ term, so our bijection intertwines the actions of $F_C$.

\smallskip

\noindent \textbf{Case 1-2b: there is at least one $S_-$ circle in the same component of $F$ as $C_2$.} If the component of $F$ containing $C_1$ is disjoint from $S_-$, then the gluing changes the quantities relevant for $\delta$ as follows:
\begin{itemize}
    \item $h$ decreases by $1$.
    \item The number of no-$S_-$ non-closed components decreases by $1$.
    \item The number of $S_+$ circles decreases by $2$.
\end{itemize}
The resulting change in $\delta$ is
\[
-(-A) - (A) -2 (-1/2) = +1,
\]
so $\pi$ also changes by $1$. On the other hand, if the component of $F$ containing $C_1$ intersects $S_-$ nontrivially, then $h$ is unchanged (in terms of bases for $H_1(F,S_+)$ rather than $H_1(F,P)$ we lose a basis arc but gain a basis circle) and the quantities relevant for $\delta$ change as follows:
\begin{itemize}
    \item The number of $S_+$ circles decreases by $2$.
\end{itemize}
The resulting change in $\delta$ is
\[
-2(-1/2) = +1,
\]
so $\pi$ also changes by $1$.

When choosing bases, we can ensure that $C_1$ and $C_2$ are basis circles; we can also ensure that $C_1$ is incident with a unique basis arc $e_1$ and that $e_1$ points from the surface into $C_1$.

With respect to divisibility by $e_1$, basis elements are of the form $\varepsilon_F \otimes \omega'$ or $\varepsilon_F \otimes (e_1 \wedge \omega')$ where $\omega'$ is not divisible by $e_1$. We have
\begin{itemize}
    \item $E_1(\varepsilon_F \otimes \omega') = 0$,
    \item $E_1 (\varepsilon_F \otimes (e_1 \wedge \omega')) = (-1)^{\pi(F)} \varepsilon_F \otimes \omega'$
\end{itemize}
The map $E_2$ is zero, so in the quotient by $\im(E_1 + E_2)$ we set the basis elements of the form $\varepsilon_F \otimes \omega'$ to zero. Now, basis elements of the quotient by $\im(E_1 + E_2)$ are of one of the following forms with respect to divisibility by $[C_1]$ and $[C_2]$:
\begin{itemize}
    \item $\varepsilon_F \otimes (e_1 \wedge [C_1] \wedge [C_2] \wedge \omega')$,
    \item $\varepsilon_F \otimes (e_1 \wedge [C_1] \wedge \omega')$,
    \item $\varepsilon_F \otimes (e_1 \wedge [C_2] \wedge \omega')$,
    \item $\varepsilon_F \otimes (e_1 \wedge \omega')$
\end{itemize} 
where $\omega'$ is divisible by neither $e_1$ nor $[C_2]$. We have
\begin{itemize}
    \item $F_1(\varepsilon_F \otimes (e_1 \wedge [C_1] \wedge [C_2] \wedge \omega')) = 0$,
    \item $F_1(\varepsilon_F \otimes (e_1 \wedge [C_1] \wedge \omega')) = 0$,
    \item $F_1(\varepsilon_F \otimes (e_1 \wedge [C_2] \wedge \omega')) = (-1)^{\pi(F) + 1} \varepsilon_F \otimes (e_1 \wedge [C_1] \wedge [C_2] \wedge \omega')$,
    \item $F_1(\varepsilon_F \otimes (e_1 \wedge \omega')) = (-1)^{\pi(F) + 1} \varepsilon_F \otimes (e_1 \wedge [C_1] \wedge \omega')$
\end{itemize}
and
\begin{itemize}
    \item $F_2(\varepsilon_F \otimes (e_1 \wedge [C_1] \wedge [C_2] \wedge \omega')) = 0$,
    \item $F_2(\varepsilon_F \otimes (e_1 \wedge [C_1] \wedge \omega')) = (-1)^{\pi(F) + 2} \varepsilon_F \otimes (e_1 \wedge [C_1] \wedge [C_2] \wedge \omega')$,
    \item $F_2(\varepsilon_F \otimes (e_1 \wedge [C_2] \wedge \omega')) = 0$,
    \item $F_2(\varepsilon_F \otimes (e_1 \wedge \omega')) = (-1)^{\pi(F) + 1} \varepsilon_F \otimes (e_1 \wedge [C_2] \wedge \omega')$.
\end{itemize}
In the quotient by $\im(F_1 + F_2)$, we thus have $\varepsilon_F \otimes (e_1 \wedge [C_1] \wedge [C_2] \wedge \omega') = 0$ and 
\[
\varepsilon_F \otimes (e_1 \wedge [C_2] \wedge \omega') = - \varepsilon_F \otimes (e_1 \wedge [C_1] \wedge \omega').
\]
Basis elements of the quotient are of the form $\varepsilon_F \otimes (e_1 \wedge \omega')$ or $\varepsilon_F \otimes (e_1 \wedge [C_1] \wedge \omega')$.

Meanwhile, for $\overline{F}$, $e_1$ is no longer a basis arc, and rather than adding a new basis circle elsewhere around one of the remaining boundary components of $\overline{F}$, we can take $[C_1]$ as a basis circle for $\overline{F}$. Define a bijection of basis elements sending
\[
\varepsilon_F \otimes (e_1 \wedge \omega') \leftrightarrow \varepsilon_{\overline{F}} \otimes \omega'
\]
and
\[
\varepsilon_F \otimes (e_1 \wedge [C_1] \wedge \omega') \leftrightarrow \varepsilon_{\overline{F}} \otimes ([C_1] \wedge \omega').
\]
Because $\delta$ and $\pi$ are each one more for $\overline{F}$ than they are for $F$, this bijection gives an isomorphism of $\Q$-graded super abelian groups.

The intertwining property for the remaining actions of $E$ follows as above. For the remaining actions of $F$, the only potentially problematic case is if $C \notin \{C_1,C_2\}$ is in the same component of $F$ as $C_1$ but is not a basis circle. The expansion of $[C]$ in terms of basis circles has a term $\pm [C_1]$, so
\begin{equation}\label{eq:Case1-2bIntertwining}
F_C(\varepsilon_F \otimes (e_1 \wedge \omega')) = (-1)^{\pi(F)+1 } \varepsilon_F \otimes (e_1 \wedge (\pm [C_1]) \wedge \omega') + \cdots
\end{equation}
where the remaining terms are not divisible by $[C_1]$. For $\overline{F}$, because we included $[C_1]$ as a basis circle, it follows that the expansion of $[C]$ in terms of basis circles is the same as it was for $F$. We get
\[
F_C(\varepsilon_{\overline{F}} \otimes \omega') = (-1)^{\pi(\overline{F})} \varepsilon_{\overline{F}} \otimes ((\pm [C_1]) \wedge \omega') + \cdots
\]
where the remaining terms are the same as in \eqref{eq:Case1-2bIntertwining} with $e_1$ removed and with $\varepsilon_F$ replaced by $\varepsilon_{\overline{F}}$, and the two instances of $\pm$ represent the same sign. Because $\pi(\overline{F}) = \pi(F) + 1$, the intertwining property holds.

\smallskip
\noindent \textbf{Case 1-3: neither $C_1$ nor $C_2$ is alone.} If either the component of $F$ containing $C_1$ or the component of $F$ containing $C_2$ is disjoint from $S_-$, then the gluing changes the quantities relevant for $\delta$ as follows:
\begin{itemize}
    \item $h$ decreases by $1$.
    \item The number of no-$S_-$ non-closed components decreases by $1$.
    \item The number of $S_+$ circles decreases by $2$.
\end{itemize}
The resulting change in $\delta$ is $+1$ as above, so $\pi$ also changes by $1$. On the other hand, if both the component of $F$ containing $C_1$ and the component of $F$ containing $C_2$ intersect $S_-$ nontrivially, then $h$ is unchanged because we lose a basis arc and gain a basis circle; the only change in quantities relevant for $\delta$ is that the number of $S_+$ circles decreases by $2$. Thus, $\delta$ changes by $+1$ and $\pi$ changes by $1$.

When choosing bases, we can ensure that $C_1$ and $C_2$ are basis circles, that $C_i$ is incident with a unique arc $e_i$ for $i \in \{1,2\}$, and that $e_1$ points from the surface into $C_1$ while $e_2$ points from $C_2$ into the surface. With respect to divisibility by $e_1$ and $e_2$, there are four types of basis elements for $\Zb^+_{\delta,\pi}(F)$, on which $E_1$ and $E_2$ act as follows:
\begin{itemize}
\item $E_1(\varepsilon_F \otimes \omega') = 0$,
\item $E_1(\varepsilon_F \otimes (e_1 \wedge \omega')) = (-1)^{\pi(F)} \varepsilon_F \otimes \omega'$,
\item $E_1(\varepsilon_F \otimes (e_2 \wedge \omega')) = 0$,
\item $E_1(\varepsilon_F \otimes (e_1 \wedge e_2 \wedge \omega')) = (-1)^{\pi(F)} e_2 \wedge \omega'$
\end{itemize}
and
\begin{itemize}
\item $E_2(\varepsilon_F \otimes \omega') = 0$,
\item $E_2(\varepsilon_F \otimes (e_1 \wedge \omega')) = 0$,
\item $E_2(\varepsilon_F \otimes (e_2 \wedge \omega')) = (-1)^{\pi(F)+1} \varepsilon_F \otimes \omega'$,
\item $E_2(\varepsilon_F \otimes (e_1 \wedge e_2 \wedge \omega')) = (-1)^{\pi(F)+2} e_1 \wedge \omega'$.
\end{itemize}
In the quotient by $\im(E_1 + E_2)$, we see that $\varepsilon_F \otimes \omega' = 0$ and
\[
\varepsilon_F \otimes (e_2 \wedge \omega') = - \varepsilon_F \otimes (e_1 \wedge \omega').
\]
Basis elements for the quotient are of the form $\varepsilon_F \otimes (e_1 \wedge \omega')$ or $\varepsilon_F \otimes (e_1 \wedge e_2 \wedge \omega')$. With respect to divisibility by $[C_1]$ and $[C_2]$, there are four types of these elements, on which $F_1$ and $F_2$ act as follows:
\begin{itemize}
    \item $F_1(\varepsilon_F \otimes \omega') = (-1)^{\pi(F)} \varepsilon_F \otimes ([C_1] \wedge \omega')$,
    \item $F_1(\varepsilon_F \otimes ([C_1] \wedge \omega')) = 0$,
    \item $F_1(\varepsilon_F \otimes ([C_2] \wedge \omega')) = (-1)^{\pi(F)} \varepsilon_F \otimes ([C_1] \wedge [C_2] \wedge \omega')$,
    \item $F_1(\varepsilon_F \otimes ([C_1] \wedge [C_2] \wedge \omega')) = 0$
\end{itemize}
and
\begin{itemize}
    \item $F_2(\varepsilon_F \otimes \omega') = (-1)^{\pi(F)} \varepsilon_F \otimes ([C_2] \wedge \omega')$,
    \item $F_2(\varepsilon_F \otimes ([C_1] \wedge \omega')) = (-1)^{\pi(F)+1} \varepsilon_F \otimes ([C_1] \wedge [C_2] \wedge \omega')$,
    \item $F_2(\varepsilon_F \otimes ([C_2] \wedge \omega')) = 0$,
    \item $F_2(\varepsilon_F \otimes ([C_1] \wedge [C_2] \wedge \omega')) = 0$.
\end{itemize}
Thus, basis elements of the quotient by $\im(E_1 + E_2) + \im(F_1 + F_2)$ have one of the following forms:
\begin{itemize}
    \item $\varepsilon_F \otimes (e_1 \wedge \omega')$,
    \item $\varepsilon_F \otimes (e_1 \wedge e_2 \wedge \omega')$,
    \item $\varepsilon_F \otimes (e_1 \wedge [C_1] \wedge \omega')$,
    \item $\varepsilon_F \otimes (e_1 \wedge e_2 \wedge [C_1] \wedge \omega')$.
\end{itemize}

Meanwhile, in $\overline{F}$ we can take $[C_1]$ as a basis circle as usual, and the concatenation of $e_1$ and $e_2$ produces a new basis arc $e$. Make the identifications
\begin{itemize}
    \item $\varepsilon_F \otimes (e_1 \wedge \omega') \leftrightarrow \varepsilon_{\overline{F}} \otimes \omega'$,
    \item $\varepsilon_F \otimes (e_1 \wedge e_2 \wedge \omega') \leftrightarrow \varepsilon_{\overline{F}} \otimes (e \wedge \omega')$,
    \item $\varepsilon_F \otimes (e_1 \wedge [C_1] \wedge \omega') \leftrightarrow \varepsilon_{\overline{F}} \otimes ([C_1] \wedge \omega')$,
    \item $\varepsilon_F \otimes (e_1 \wedge e_2 \wedge [C_1] \wedge \omega') \leftrightarrow \varepsilon_{\overline{F}} \otimes (e \wedge [C_1] \wedge \omega')$.
\end{itemize}
These identifications give an isomorphism of $\Q$-graded super abelian groups. For the intertwining property, actions of $E$ corresponding to intervals or circles $X \notin \{C_1,C_2\}$ that intersect neither $e_1$ nor $e_2$ are dealt with as above. If $X$ contains an endpoint of $e_1$, then we have
\begin{itemize}
    \item $E_X(\varepsilon_F \otimes (e_1 \wedge \omega')) = 0$,
    \item $E_X(\varepsilon_F \otimes (e_1 \wedge e_2 \wedge \omega')) = (-1)^{\pi(F) + 2} \varepsilon_F \otimes (e_1 \wedge \omega')$ (one extra minus sign comes from removing $-e_1$ and the other comes from replacing $e_2$ with $-e_1$),
    \item $E_X(\varepsilon_F \otimes (e_1 \wedge [C_1] \wedge \omega')) = 0$,
    \item $E_X(\varepsilon_F \otimes (e_1 \wedge e_2 \wedge [C_1] \wedge \omega')) = (-1)^{\pi(F) + 2} \varepsilon_F \otimes (e_1 \wedge [C_1] \wedge \omega')$.
\end{itemize}
In $\overline{F}$, $X$ now contains an endpoint of $e$ (which points from $X$ into the surface), so 
\begin{itemize}
    \item $E_X(\varepsilon_{\overline{F}} \otimes \omega') = 0$,
    \item $E_X(\varepsilon_{\overline{F}} \otimes (e \wedge \omega')) = (-1)^{\pi({\overline{F}}) + 1} \varepsilon_{\overline{F}} \otimes \omega'$,
    \item $E_X(\varepsilon_{\overline{F}} \otimes ([C_1] \wedge \omega')) = 0$,
    \item $E_X(\varepsilon_{\overline{F}} \otimes (e \wedge [C_1] \wedge \omega')) = (-1)^{\pi(\overline{F}) + 1} \varepsilon_{\overline{F}} \otimes ([C_1] \wedge \omega')$.
\end{itemize}
Because $\pi(\overline{F}) = \pi(F) + 1$, our identifications intertwine the remaining actions of $E$.

For the remaining actions of $F$, the only case not entirely analogous to the ones above is if $C \notin \{C_1,C_2\}$ is in the same component as $C_2$ but is not a basis circle. The expansion of $[C]$ in terms of basis circles has a term $\pm [C_2]$, so
\[
F_C(\varepsilon_F \otimes (e_1 \wedge \omega')) = (-1)^{\pi(F) + 1} \varepsilon_F \otimes (e_1 \wedge (\mp [C_1]) \wedge \omega') + \cdots
\]
and
\[
F_C(\varepsilon_F \otimes (e_1 \wedge e_2 \wedge \omega')) = (-1)^{\pi(F) + 2} \varepsilon_F \otimes (e_1 \wedge e_2 \wedge (\mp [C_1]) \wedge \omega') + \cdots.
\]
The key observation is that in $\overline{F}$, the circles $C_1$ and $C_2$ are oriented oppositely, so that $[C_2] = -[C_1]$ and the basis expansion of $[C]$ in terms of basis circles has a term $\mp [C_1]$. We have
\[
F_C(\varepsilon_{\overline{F}} \otimes \omega') = (-1)^{\pi(\overline{F})} \varepsilon_{\overline{F}} \otimes ((\mp [C_1]) \wedge \omega') + \cdots
\]
and
\[
F_C(\varepsilon_{\overline{F}} \otimes (e \wedge \omega') = (-1)^{\pi(\overline{F})} \varepsilon_{\overline{F}+1} \otimes (e \wedge (\mp [C_1]) \wedge \omega') + \cdots.
\]
Thus, the formulas for $F_C$ in the $F$ and $\overline{F}$ cases agree under our identification.

\smallskip

\noindent \textbf{Case 2: $C_1$ and $C_2$ are on the same component of $F$.}

\smallskip

\noindent \textbf{Case 2-1: gluing $C_1$ and $C_2$ produces a closed component of $F$.} Assume that the component of $F$ containing $C_1$ and $C_2$ is otherwise disjoint from both $S_+$ and $S_-$, so that gluing $C_1$ and $C_2$ produces a closed component of $F$. The quantities relevant for $\delta$ change as follows:
\begin{itemize}
    \item $h$ increases by $1$ (in terms of bases for $H_1(F,S_+)$, a basis arc turned into a basis circle and we added another basis circle).
    \item The number of no-$S_-$ non-closed components decreases by $1$.
    \item The number of closed components increases by $1$.
    \item The number of $S_+$ circles decreases by $2$.
\end{itemize}
The resulting change in $\delta$ is
\[
(-A) - (A) + (2A - 1) -2 (-1/2) = 0
\]
so $\pi$ is also unchanged.

When choosing bases, we can ensure that $C_1$ is a basis circle and that the only basis arc intersecting $C_1$ or $C_2$ is an arc $e$ pointing out of $C_2$ and into $C_1$. We have
\begin{itemize}
    \item $E_1(\varepsilon_F \otimes (e \wedge [C_1] \wedge \omega')) = (-1)^{\pi(F)} \varepsilon_F \otimes ([C_1] \wedge \omega')$,
    \item $E_1(\varepsilon_F \otimes (e \wedge \omega')) = (-1)^{\pi(F)} \varepsilon_F \otimes \omega'$,
    \item $E_1(\varepsilon_F \otimes ([C_1] \wedge \omega')) = 0$,
    \item $E_1(\varepsilon_F \otimes \omega') = 0$ 
\end{itemize}
and
\begin{itemize}
    \item $E_2(\varepsilon_F \otimes (e \wedge [C_1] \wedge \omega')) = (-1)^{\pi(F)+1} \varepsilon_F \otimes ([C_1] \wedge \omega')$,
    \item $E_2(\varepsilon_F \otimes (e \wedge \omega')) = (-1)^{\pi(F)+1} \varepsilon_F \otimes \omega'$,
    \item $E_2(\varepsilon_F \otimes ([C_1] \wedge \omega')) = 0$,
    \item $E_2(\varepsilon_F \otimes \omega') = 0$.
\end{itemize}
Thus, $E_1 + E_2 = 0$. For $i \in \{1,2\}$ we also have
\begin{itemize}
    \item $F_1(\varepsilon_F \otimes (e \wedge [C_1] \wedge \omega')) = 0$,
    \item $F_1(\varepsilon_F \otimes (e \wedge \omega')) = (-1)^{\pi(F)+1} \varepsilon_F \otimes (e \wedge [C_1] \wedge \omega')$,
    \item $F_1(\varepsilon_F \otimes ([C_1] \wedge \omega')) = 0$,
    \item $F_1(\varepsilon_F \otimes \omega') = (-1)^{\pi(F)} \varepsilon_F \otimes ([C_1] \wedge \omega')$ 
\end{itemize}
and 
\begin{itemize}
    \item $F_2(\varepsilon_F \otimes (e \wedge [C_1] \wedge \omega')) = 0$,
    \item $F_2(\varepsilon_F \otimes (e \wedge \omega')) = (-1)^{\pi(F)+2} \varepsilon_F \otimes (e \wedge [C_1] \wedge \omega')$,
    \item $F_2(\varepsilon_F \otimes ([C_1] \wedge \omega')) = 0$,
    \item $F_2(\varepsilon_F \otimes \omega') = (-1)^{\pi(F)+1} \varepsilon_F \otimes ([C_1] \wedge \omega')$ 
\end{itemize}
so $F_1 + F_2$ is also the zero map. 

Meanwhile, in $\overline{F}$, the arc $e$ closes up to become a basis circle $\tau$ and we can retain $[C_1]$ as a basis circle. Make the identifications
\begin{itemize}
    \item $\varepsilon_F \otimes (e \wedge [C_1] \wedge \omega') \leftrightarrow \varepsilon_{\overline{F}} \otimes (\tau \wedge [C_1] \wedge \omega')$,
    \item $\varepsilon_F \otimes (e \wedge \omega') \leftrightarrow \varepsilon_{\overline{F}} \otimes (\tau  \wedge \omega')$,
    \item $\varepsilon_F \otimes ([C_1] \wedge \omega') \leftrightarrow \varepsilon_{\overline{F}} \otimes ([C_1] \wedge \omega')$,
    \item $\varepsilon_F \otimes \omega' \leftrightarrow \varepsilon_{\overline{F}} \otimes \omega'$.
\end{itemize}
These identifications give an isomorphism of $\Q$-graded super abelian groups, and there are no complications with the intertwining property.

\smallskip

\noindent \textbf{Case 2-2: the component of $F$ containing $C_1$ and $C_2$ contains no other components of $S_+$ but has at least one $S_-$ circle.} The quantities relevant for $\delta$ change as follows:
\begin{itemize}
    \item $h$ increases by $1$ (in terms of bases for $H_1(F,S_+)$, a basis arc turned into a basis circle and we added another basis circle).
    \item The number of no-$S_+$ non-closed components increases by $1$.
    \item The number of $S_+$ circles decreases by $2$.
\end{itemize}
The resulting change in $\delta$ is
\[
(-A)+ (A-1) -2 (-1/2) = 0,
\]
so $\pi$ is also unchanged.

When choosing bases, we can ensure that both $C_1$ and $C_2$ are basis circles and that the only basis arc intersecting $C_1$ or $C_2$ is an arc $e$ pointing out of $C_2$ and into $C_1$. When computing $E_1$ and $E_2$, we do not care about divisibility by $[C_i]$; we have
\begin{itemize}
    \item $E_1(\varepsilon_F \otimes (e \wedge \omega')) = (-1)^{\pi(F)} \varepsilon_F \otimes \omega'$,
    \item $E_1(\varepsilon_F \otimes \omega') = 0$
\end{itemize}
and
\begin{itemize}
    \item $E_2(\varepsilon_F \otimes (e \wedge \omega')) = (-1)^{\pi(F)+1} \varepsilon_F \otimes \omega'$,
    \item $E_2(\varepsilon_F \otimes \omega') = 0$
\end{itemize}
Thus, $E_1 + E_2$ is the zero map.

Similarly, when computing $F_1$ and $F_2$, we do not care about divisibility by $e$; we have
\begin{itemize}
    \item $F_1(\varepsilon_F \otimes ([C_1] \wedge [C_2] \wedge \omega')) = 0$,
    \item $F_1(\varepsilon_F \otimes ([C_1] \wedge \omega')) = 0$,
    \item $F_1(\varepsilon_F \otimes ([C_2] \wedge \omega') )= (-1)^{\pi(F)} \varepsilon_F \otimes ([C_1] \wedge [C_2] \wedge \omega')$,
    \item $F_1(\varepsilon_F \otimes \omega') =(-1)^{\pi(F)} \varepsilon_F \otimes ([C_1] \wedge \omega')$
\end{itemize}
and
\begin{itemize}
    \item $F_2(\varepsilon_F \otimes ([C_1] \wedge [C_2] \wedge \omega')) = 0$,
    \item $F_2(\varepsilon_F \otimes ([C_1] \wedge \omega')) = (-1)^{\pi(F)+1} \varepsilon_F \otimes ([C_1] \wedge [C_2] \wedge \omega')$,
    \item $F_2(\varepsilon_F \otimes ([C_2] \wedge \omega') )= 0$,
    \item $F_2(\varepsilon_F \otimes \omega') =(-1)^{\pi(F)} \varepsilon_F \otimes ([C_2] \wedge \omega')$.
\end{itemize}
Thus, in the quotient by $\im(F_1 + F_2)$, we have $\varepsilon_F \otimes ([C_1] \wedge [C_2] \wedge \omega') = 0$ and
\[
\varepsilon_F \otimes ([C_2] \wedge \omega') = -\varepsilon_F \otimes ([C_1] \wedge \omega').
\]
Basis elements for the quotient are of the form $\varepsilon_F \otimes \omega'$, $\varepsilon_F \otimes ([C_1] \wedge \omega')$, $\varepsilon_F \otimes (e \wedge \omega')$, and $\varepsilon(F) \otimes (e \wedge [C_1] \wedge \omega')$.

Meanwhile, in $\overline{F}$, the basis arc $e$ closes up to become a basis circle $\tau$, and we can retain $[C_1] = -[C_2]$ as a basis circle. Make the identifications
\begin{itemize}
\item $\varepsilon_F \otimes \omega' \leftrightarrow \varepsilon_F \otimes \omega'$,
\item $\varepsilon_F \otimes ([C_1] \wedge \omega') \leftrightarrow \varepsilon_F \otimes ([C_1] \wedge \omega')$,
\item $\varepsilon_F \otimes (e \wedge \omega') \leftrightarrow \varepsilon_F \otimes (\tau \wedge \omega')$,
\item $\varepsilon_F \otimes (e \wedge [C_1] \wedge \omega') \leftrightarrow \varepsilon_F \otimes (\tau \wedge [C_1] \wedge \omega')$;
\end{itemize}
we get an isomorphism of $\Q$-graded super abelian groups and again there are no complications with the intertwining property.

\smallskip

\noindent \textbf{Case 2-3: the component of $F$ containing $C_1$ and $C_2$ contains at least one other component of $S_+$.} The quantities relevant for $\delta$ change as follows: 
\begin{itemize}
    \item The number of $S_+$ circles decreases by $2$.
\end{itemize}
Note that $h$ is unchanged because, in terms of bases for $H_1(F,S_+)$, two basis arcs combined to form a basis circle ($-1$ to $h$) but we also added another basis circle ($+1$ to $h$). The resulting change in $\delta$ is $+1$, so $\pi$ also changes by $1$.

When choosing bases, we can ensure that both $C_1$ and $C_2$ are basis circles, that $C_i$ is incident with a unique basis arc $e_i$ for $i \in \{1,2\}$, and that $e_1$ points from the surface into $C_1$ while $e_2$ points from $C_2$ into the surface.

When computing $E_1$ and $E_2$, we do not care about divisibility by $[C_1]$ or $[C_2]$; we have
\begin{itemize}
    \item $E_1(\varepsilon_F \otimes (e_1 \wedge e_2 \wedge \omega')) = (-1)^{\pi(F)} \varepsilon_F \otimes (e_2 \wedge \omega')$,
    \item $E_1(\varepsilon_F \otimes (e_1 \wedge \omega')) = (-1)^{\pi(F)} \varepsilon_F \otimes \omega'$,
    \item $E_1(\varepsilon_F \otimes (e_2 \wedge \omega')) = 0$,
    \item $E_1(\varepsilon_F \otimes \omega') = 0$
\end{itemize}
and
\begin{itemize}
    \item $E_2(\varepsilon_F \otimes (e_1 \wedge e_2 \wedge \omega')) = (-1)^{\pi(F)+2} \varepsilon_F \otimes (e_1 \wedge \omega')$,
    \item $E_2(\varepsilon_F \otimes (e_1 \wedge \omega')) = 0$,
    \item $E_2(\varepsilon_F \otimes (e_2 \wedge \omega')) = (-1)^{\pi(F)+1} \varepsilon_F \otimes \omega'$,
    \item $E_2(\varepsilon_F \otimes \omega') = 0$.
\end{itemize}
In the quotient by $\im(E_1 + E_2)$, we thus have $\varepsilon_F \otimes \omega' = 0$ and
\[
\varepsilon_F \otimes (e_2 \wedge \omega') = -\varepsilon_F \otimes (e_1 \wedge \omega').
\]
Basis elements for the quotient are of the form $\varepsilon_F \otimes (e_1 \wedge e_2 \wedge \omega')$ and $\varepsilon_F \otimes (e_1 \wedge \omega')$.

When computing $F_1$ and $F_2$, we do not care about divisibility by $e_1$ or $e_2$; we have
\begin{itemize}
    \item $F_1(\varepsilon_F \otimes ([C_1] \wedge [C_2] \wedge \omega')) = 0$,
    \item $F_1(\varepsilon_F \otimes ([C_1] \wedge \omega')) = 0$,
    \item $F_1(\varepsilon_F \otimes ([C_2] \wedge \omega')) = (-1)^{\pi(F)} \varepsilon_F \otimes ([C_1] \wedge [C_2] \wedge \omega')$,
    \item $F_1(\varepsilon_F \otimes \omega') = (-1)^{\pi(F)} \varepsilon_F \otimes ([C_1] \wedge \omega')$
\end{itemize}
and
\begin{itemize}
    \item $F_2(\varepsilon_F \otimes ([C_1] \wedge [C_2] \wedge \omega')) = 0$,
    \item $F_2(\varepsilon_F \otimes ([C_1] \wedge \omega')) = (-1)^{\pi(F)+1} \varepsilon_F \otimes ([C_1] \wedge [C_2] \wedge \omega')$,
    \item $F_2(\varepsilon_F \otimes ([C_2] \wedge \omega')) = 0$,
    \item $F_2(\varepsilon_F \otimes \omega') = (-1)^{\pi(F)} \varepsilon_F \otimes ([C_2] \wedge \omega')$.
\end{itemize}
In the quotient by $\im(F_1 + F_2)$ we have $\varepsilon_F \otimes ([C_1] \wedge [C_2] \wedge \omega') = 0$ and
\[
\varepsilon_F \otimes ([C_2] \wedge \omega') = -\varepsilon_F \otimes ([C_1] \wedge \omega').
\]
Thus, we have four types of basis element for the quotient by $\im(E_1 + E_2) + \im(F_1 + F_2)$:
\begin{itemize}
    \item $\varepsilon_F \otimes (e_1 \wedge e_2 \wedge \omega')$,
    \item $\varepsilon_F \otimes (e_1 \wedge e_2 \wedge [C_1] \wedge \omega')$,
    \item $\varepsilon_F \otimes (e_1 \wedge \omega')$,
    \item $\varepsilon_F \otimes (e_1 \wedge [C_1] \wedge \omega')$.
\end{itemize}

Meanwhile, for $\overline{F}$, the arcs $e_1$ and $e_2$ combine into a basis circle $\tau$ and we can also retain $[C_1]$ as a basis circle. Make the identifications
\begin{itemize}
    \item $\varepsilon_F \otimes (e_1 \wedge e_2 \wedge \omega') \leftrightarrow \varepsilon_{\overline{F}} \otimes (\tau \wedge \omega')$,
    \item $\varepsilon_F \otimes (e_1 \wedge e_2 \wedge [C_1] \wedge \omega') \leftrightarrow \varepsilon_{\overline{F}} \otimes (\tau \wedge [C_1] \wedge \omega')$,
    \item $\varepsilon_F \otimes (e_1 \wedge \omega') \leftrightarrow \varepsilon_{\overline{F}} \otimes \omega'$,
    \item $\varepsilon_F \otimes (e_1 \wedge [C_1] \wedge \omega') \leftrightarrow \varepsilon_{\overline{F}} \otimes ([C_1] \wedge \omega')$;
\end{itemize}
we get an isomorphism of $\Q$-graded super abelian groups.

For the intertwining property, we need to see what happens if $C \notin \{C_1,C_2\}$ is not a basis circle but lives in the same component of $F$ as $C_1$ and $C_2$. In this case, the expansion of $[C]$ in terms of basis circles includes terms $\pm([C_1] + [C_2])$ and the potentially problematic terms for the intertwining property are:
\begin{itemize}
\item $F_C(\varepsilon_F \otimes (e_1 \wedge e_2 \wedge \omega')) = (-1)^{\pi(F)+2} \varepsilon_F \otimes (e_1 \wedge e_2 \wedge \pm([C_1] + [C_2]) \wedge \omega' + \cdots$,
\item $F_C(\varepsilon_F \otimes (e_1 \wedge e_2 \wedge [C_1] \wedge \omega')) = (-1)^{\pi(F) + 2} \varepsilon_F \otimes (e_1 \wedge e_2 \wedge (\pm [C_2]) \wedge [C_1] \wedge \omega') + \cdots$,
\item $F_C(\varepsilon_F \otimes (e_1 \wedge \omega')) = (-1)^{\pi(F)+1} \varepsilon_F \otimes (e_1  \wedge \pm([C_1] + [C_2]) \wedge \omega' + \cdots$,
\item $F_C(\varepsilon_F \otimes (e_1 \wedge [C_1] \wedge \omega')) = (-1)^{\pi(F) + 1} \varepsilon_F \otimes (e_1 \wedge (\pm [C_2]) \wedge [C_1] \wedge \omega') + \cdots$.
\end{itemize}
However, in the quotient $\frac{\Zb^P_{\delta,\pi}(F)}{\im(E_1 + E_2) + \im(F_1 + F_2)}$, these terms are all zero. Meanwhile, in $\overline{F}$, $[C_2]$ becomes equal to $-[C_1]$ and thus $\pm([C_1] + [C_2])$ vanishes from the basis expansion of $[C]$, so there are no analogues of the above terms when applying $F_C$ on $\Zb^P_{\delta,\pi}(\overline{F})$. The intertwining property follows.
\end{proof}

\subsection{Composing open-closed cobordisms}

While Theorem~\ref{thm:IntroMain1F2} is phrased only over $\F_2$, we will prove the following more general version.

\begin{theorem}\label{thm:GeneralCompositionWithSigns}
Let $M_1$, $M_2$, and $M_3$ be objects of $\Cob^{\ext}$ and let
\[
M_3 \xleftarrow{F'} M_2 \xleftarrow{F} M_1
\]
be morphisms in $\Cob^{\ext}$. For any $A \in \Q$, we have
\[
\Zb^P_{\delta_A,\F_2}(F' \circ F) \cong \Zb^P_{\delta_A,\F_2}(F') \otimes_{A^{\F_2}_{EF}(M_2)} \Zb^P_{\delta_A,\F_2}(F)
\]
as $\Q$-graded bimodules over the $\Z$-graded $\F_2$-algebras $(A^{\F_2}_{EF}(M_3),A^{\F_2}_{EF}(M_1))$. Furthermore, if $\pi_j(F)$ and $\pi_j(F')$ both make sense for some $j \in \{1/2, 1\}$, then $\pi_j(F' \circ F)$ also makes sense and we have
\[
\Zb^P_{\delta_A,\pi_j}(F' \circ F) \cong \Zb^P_{\delta_A,\pi_j}(F') \otimes_{A_{EF}(M_2)} \Zb^P_{\delta_A,\pi_j}(F)
\]
as $\Q$-graded bimodules over the $\Z$-graded super rings $(A_{EF}(M_3),A_{EF}(M_1))$.
\end{theorem}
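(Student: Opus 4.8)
The plan is to realize the composite $F' \circ F$ by repeatedly gluing pairs of $S_+$ components and to apply the gluing Lemmas~\ref{lem:IntervalGluing} and~\ref{lem:CircleGluing} one component of $M_2$ at a time. First I would record the essentially formal facts that $\wedge^* H_1(-,P)$, the shift function $\delta_A$, and the parity $\pi_j$ are all additive under disjoint union, and that with the sign and super conventions of Definitions~\ref{def:EActions} and~\ref{def:FActions} the assignment $F \mapsto \Zb^P_{\delta_A,\pi_j}(F)$ is monoidal, so that $\Zb^P_{\delta_A,\pi_j}(F' \sqcup F) \cong \Zb^P_{\delta_A,\pi_j}(F') \otimes_{\Z} \Zb^P_{\delta_A,\pi_j}(F)$ as $\Q$-graded super abelian groups, compatibly with all of the $E$ and $F$ endomorphisms. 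Since $A_{EF}(M_2)$ is generated by the elements $E_X$ for all components $X$ of $M_2$ together with $F_C$ for the circle components $C$, the relative tensor product $\Zb^P_{\delta_A,\pi_j}(F') \otimes_{A_{EF}(M_2)} \Zb^P_{\delta_A,\pi_j}(F)$ is the quotient of $\Zb^P_{\delta_A,\pi_j}(F' \sqcup F)$ by the images of the differences between the right action of each such generator on the $F'$-factor and its left action on the $F$-factor.

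Next I would match these differences with the combinations appearing in the gluing lemmas. A component $X$ of $M_2$ occurs twice in $F' \sqcup F$: once as an incoming $S_+$ component of $F'$ (on which $A_{EF}(M_2)$ acts on the right) and once as an outgoing $S_+$ component of $F$ (on which it acts on the left). Reversing the orientation of the $F'$-copy and relabeling it outgoing leaves $\Zb^P_{\delta_A,\pi_j}$ unchanged as a $\Q$-graded super abelian group, and --- using the orientation-reversal automorphism $[C] \mapsto -[C]$ of $\wedge^* H_1$ together with the two different sign conventions that Definitions~\ref{def:EActions} and~\ref{def:FActions} assign to incoming versus outgoing components --- converts ``right action minus left action'' into the combination $\im(E_1 + E_2)$ (and, for circles, $\im(F_1 + F_2)$) for the two now-both-outgoing copies of $X$. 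The surface obtained from $F' \sqcup F$ by gluing these two copies is precisely $F' \circ F$, up to an orientation relabeling of the $M_1$ and $M_3$ boundaries that is undone at the end. Thus Lemma~\ref{lem:IntervalGluing} applies when $X$ is an interval and Lemma~\ref{lem:CircleGluing} when $X$ is a circle, producing the desired quotient together with the correct change in $\delta_A$ and in $\pi_j$; since gluing an $S_+$ interval to an $S_+$ interval or an $S_+$ circle to an $S_+$ circle preserves definedness of $\pi_j$ and leaves the remaining $S_+$ components outgoing, I can iterate over all components of $M_2$ in any order, ending with $\pi_j(F' \circ F)$ defined and $\Zb^P_{\delta_A,\pi_j}(F' \circ F)$ obtained on the nose. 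The $\F_2$ statement is the same argument with $\pi$ ignored, and needs no hypothesis on parities.

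It then remains to upgrade the resulting isomorphism of $\Q$-graded super abelian groups to one of $(A_{EF}(M_3), A_{EF}(M_1))$-bimodules. This is immediate from the clause of each lemma asserting compatibility with the $E$ (and, in Lemma~\ref{lem:CircleGluing}, $F$) actions for the $S_+$ components that are not being glued: the $S_+$ components underlying $M_1$ and $M_3$ are never glued, so at every stage the partial isomorphism intertwines their actions, and the $\Q$-grading on the relative tensor product is the total grading inherited from the graded ring $A_{EF}(M_2)$ --- the phenomenon singled out in the remark contrasting $\psl(1|1)$ with $\pgl(1|1)$ --- which is exactly the grading tracked through the lemmas.

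The step I expect to be the main obstacle is the sign bookkeeping in the orientation-reversal identification: verifying that, with the precise conventions of Definitions~\ref{def:EActions} and~\ref{def:FActions} (including the $(-1)^{\pi(F)}$ twists on outgoing components and the sign $[C]\mapsto -[C]$), the relation defining $\otimes_{A_{EF}(M_2)}$ genuinely becomes $\im(E_1+E_2)+\im(F_1+F_2)$ for each component of $M_2$ rather than some other sign combination. This is the same type of verification carried out in \cite[proof of Lemma~3.1]{ManionDHASigns} and the composition argument built on it there, extended here to incorporate the $F_C$ endomorphisms for circles; over $\F_2$ it is vacuous, which is why Theorem~\ref{thm:IntroMain1F2} can be stated without reference to parities.
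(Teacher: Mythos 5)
Your proposal is essentially correct and follows the same route as the paper: rewrite $\Zb^P_{\delta_A,\pi_j}(F') \otimes_{A_{EF}(M_2)} \Zb^P_{\delta_A,\pi_j}(F)$ as a quotient of the disjoint-union state space, pass (up to sign-twisted identification) to a version of $F' \sqcup F$ in which all $S_+$ components are relabeled outgoing, check that the defining relations of the relative tensor product become precisely $\im(E_1+E_2)$ and $\im(E_1+E_2) + \im(F_1+F_2)$, and then apply Lemmas~\ref{lem:IntervalGluing} and~\ref{lem:CircleGluing} one component of $M_2$ at a time before undoing the relabeling. The point you flag as the main obstacle --- the sign bookkeeping turning ``right action minus left action'' into the unsigned sums $E_1+E_2$ and $F_1+F_2$, using the $(-1)^{\pi(F)}$ twists and $[C]\mapsto -[C]$ under orientation reversal --- is exactly where the paper's proof spends most of its effort, and you correctly identify it as analogous to the sign analysis in the earlier paper's proof of composition. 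One small slip: only $M_1$ (and the $F'$-copy of $M_2$) needs to be relabeled when passing to the all-outgoing surface; $M_3$ is already outgoing in $F'$, so no relabeling of $M_3$ is required or undone at the end. This does not affect the argument.
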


\begin{proof}
The claim about $\pi_j(F' \circ F)$ follows as in the beginning of the proof of Lemma~\ref{lem:CircleGluing}. Below we will assume that either we have $j \in \{1/2, 1\}$ such that $\pi_j(F)$ and $\pi_j(F')$ make sense, or that we are working over $\F_2$; we will use the notation of the $\Z$-version. Let $\delta = \delta_A$ and $\pi = \pi_j$.

As in \cite[proof of Theorem 1.2]{ManionDHASigns}, we can write $\Zb^{P}_{\delta,\pi}(F') \otimes_{A_{EF}(M_2)} \Zb^{P}_{\delta,\pi}(F)$ as
\begin{equation}\label{eq:CompositionTensor}
\frac{\Zb^{P}_{\delta,\pi}(F') \otimes_{\Z} \Zb^{P}_{\delta,\pi}(F)}{\spann_{\Z}\{(\varepsilon_{F'} \otimes x)a \otimes (\varepsilon_{F} \otimes y) - (\varepsilon_{F'} \otimes x) \otimes a(\varepsilon_{F} \otimes y)\}}
\end{equation}
where, in the denominator, $a$ is an arbitrary multiplicative generator 
\[
a = 1 \otimes \cdots \otimes 1 \otimes E \otimes 1 \otimes \cdots \otimes 1
\]
or
\[
a = 1 \otimes \cdots \otimes 1 \otimes F \otimes 1 \otimes \cdots \otimes 1
\]
of $A_{EF}(M_2)$, $x$ is an arbitrary basis element of $\wedge^* H_1(F',P)$, and $y$ is an arbitrary basis element of $\wedge^* H_1(F,P)$.

Let $\mathrm{Left}_{A_{EF}(M_1)} \left[ \Zb^{P}_{\delta,\pi}(F') \otimes_{\Z} \Zb^{P}_{\delta,\pi}(F) \right]$ denote $\Zb^{P}_{\delta,\pi}(F') \otimes_{\Z} \Zb^{P}_{\delta,\pi}(F)$ with the right action of the supercommutative superalgebra $A_{EF}(M_1)$ viewed as a left action; an element $a$ acts on the left by first passing to the right, which picks up a sign, and then acting from the right. If we let $(F' \sqcup F)_{\mathrm{left}}$ denote $F' \sqcup F$ with all of its $S_+$ boundary components viewed as outgoing, then as $\Q$-graded super abelian groups we have
\[
\mathrm{Left}_{A_{EF}(M_1)} \left[ \Zb^{P}_{\delta,\pi}(F') \otimes_{\Z} \Zb^{P}_{\delta,\pi}(F) \right] \cong \Zb^P_{\delta,\pi}((F' \sqcup F)_{\mathrm{left}})
\]
via the map $\Phi$ sending
\[
\varepsilon_{F'} \otimes x \otimes \varepsilon_F \otimes y \mapsto (-1)^{|x|\pi(F)} \varepsilon_{(F' \sqcup F)_{\mathrm{left}}} \otimes (x \wedge y).
\]
The isomorphism $\Phi$ is compatible with the left actions of $A_{EF}(M_3)$; it is also compatible with the left actions of $F$ for incoming circles of $F$, and it relates the left actions of $E$ for incoming intervals or circles of $F$ by a minus sign (the explanation of this minus sign is the same as in \cite[proof of Theorem 1.2]{ManionDHASigns}).

The map $\Phi$ sends $(\varepsilon_{F'} \otimes x)a \otimes (\varepsilon_{F} \otimes y)$ to 
\[
(-1)^{(|x|+1)\pi(F)} \varepsilon_{(F' \sqcup F)_{\mathrm{left}}} \otimes (xa \wedge y)
\]
where $xa$ is still computed in $\Zb^P_{\delta,\pi}(F')$. In terms of the left action $\bullet_1$ (coming from $F'$) of $A(M_2)$ on $\Zb^P_{\delta,\pi}((F' \sqcup F)_{\mathrm{left}})$, we can write this element as either
\begin{align*}
&(-1)^{(|x|+1)\pi(F) + \pi(F') + \pi(F) + |x|} a \bullet_1 \varepsilon_{(F' \sqcup F)_{\mathrm{left}}} \otimes (x \wedge y) \\
&= (-1)^{|x|\pi(F) + \pi(F') + |x|} a \bullet_1 \varepsilon_{(F' \sqcup F)_{\mathrm{left}}} \otimes (x \wedge y)
\end{align*}
if $a$ is an $F$ generator or
\begin{align*}
&-(-1)^{(|x|+1)\pi(F) + \pi(F') + \pi(F) + |x|} a \bullet_1 \varepsilon_{(F' \sqcup F)_{\mathrm{left}}} \otimes (x \wedge y) \\
&= -(-1)^{|x|\pi(F) + \pi(F') + |x|} a \bullet_1 \varepsilon_{(F' \sqcup F)_{\mathrm{left}}} \otimes (x \wedge y)
\end{align*}
if $a$ is an $E$ generator (the extra minus sign arises for the same reason that it does in the proof of \cite[Theorem 1.2]{ManionDHASigns}). Similarly, $\Phi$ sends 
\[
(\varepsilon_{F'} \otimes x) \otimes a(\varepsilon_{F} \otimes y) = (-1)^{\pi(F)} (\varepsilon_{F'} \otimes x) \otimes (\varepsilon_{F} \otimes ay)
\]
to
\[
(-1)^{\pi(F) + |x|\pi(F)} \varepsilon_{(F' \sqcup F)_{\mathrm{left}}} \otimes (x \wedge ay)
\]
where $ay$ is still computed in $\Zb^P_{\delta,\pi}(F)$. In terms of the left action $\bullet_2$ (coming from $F$) of $A(M_2)$ on $\Zb^P_{\delta,\pi}((F' \sqcup F)_{\mathrm{left}})$, we can write this element as either
\begin{align*}
&-(-1)^{\pi(F) + |x|\pi(F) + \pi(F') + \pi(F) + |x|} a \bullet_2\varepsilon_{(F' \sqcup F)_{\mathrm{left}}} \otimes (x \wedge y) \\
&= -(-1)^{|x|\pi(F) + \pi(F') + |x|} a \bullet_2\varepsilon_{(F' \sqcup F)_{\mathrm{left}}} \otimes (x \wedge y)
\end{align*}
if $a$ is an $F$ generator (the extra minus sign arises because the orientation of the circle added by $a$ in the $F$ case is the opposite of the orientation of the circle added by $a$ in the $(F' \sqcup F)_{\mathrm{left}}$ case) or
\begin{align*}
&(-1)^{\pi(F) + |x|\pi(F) + \pi(F') + \pi(F) + |x|} a \bullet_2\varepsilon_{(F' \sqcup F)_{\mathrm{left}}} \otimes (x \wedge y) \\
&= (-1)^{|x|\pi(F) + \pi(F') + |x|} a \bullet_2\varepsilon_{(F' \sqcup F)_{\mathrm{left}}} \otimes (x \wedge y)
\end{align*}
if $a$ is an $E$ generator.

Thus, we get an isomorphism between \eqref{eq:CompositionTensor} and
\begin{equation}\label{eq:LeftSide}
\mathrm{Right}_{A_{EF}(M_1)} \left[ \frac{\Zb^P_{\delta,\pi}((F' \sqcup F)_{\mathrm{left}}))}{\spann\{a \bullet_1 \left(\varepsilon_{(F \sqcup F')_{\mathrm{left}}} \otimes z \right) + a \bullet_2 \left(\varepsilon_{(F \sqcup F')_{\mathrm{left}}} \otimes z\right)\}} \right]
\end{equation}
where, in the denominator, $a$ is an arbitrary multiplicative generator of $A_{EF}(M_2)$ and $z$ is an arbitrary basis element of $\Zb^P_{\delta,\pi}((F' \sqcup F)_{\mathrm{left}}))$. The isomorphism is compatible with the left actions of $A_{EF}(M_3)$; it is also compatible with the left actions of $F$ for incoming circles of $F$, and it relates the left actions of $E$ for incoming intervals or circles of $F$ by a minus sign.

The denominator is a sum of subspaces:
\begin{itemize}
    \item $\im(E_1 + E_2)$ for each interval component of $M_2$ with associated $E$-endomorphisms $E_1, E_2$ of $\Zb^P_{\delta,\pi}((F' \sqcup F)_{\mathrm{left}}))$, and
    \item $\im(E_1 + E_2) + \im(F_1 + F_2)$ for each circle component of $M_2$ with associated $E$-endomorphisms $E_1,E_2$ and $F$-endomorphisms $F_1,F_2$ of $\Zb^P_{\delta,\pi}((F' \sqcup F)_{\mathrm{left}}))$.
\end{itemize}
Taking the quotients one component of $M_2$ at a time and applying Lemma~\ref{lem:IntervalGluing} for interval components and Lemma~\ref{lem:CircleGluing} for circle components, the quotient in \eqref{eq:LeftSide} is isomorphic to
\[
\Zb^P_{\delta,\pi}((F' \circ F)_{\mathrm{left}})
\]
since $(F' \circ F)_{\mathrm{left}}$ is the surface obtained by doing all these gluings to $(F' \sqcup F)_{\mathrm{left}}$. It follows that the right side of the isomorphism in the statement of the corollary is isomorphic to 
\[
\mathrm{Right}_{A_{EF}(M_1)} \left[ \Zb^P_{\delta,\pi}((F' \circ F)_{\mathrm{left}}) \right],
\]
compatibly with the left actions of $A_{EF}(M_3)$ and the right actions of $F$ generators of $A_{EF}(M_1)$ and relating the right actions of $E$ generators of $A_{EF}(M_1)$ by a minus sign. Equivalently, it is isomorphic to
\[
\Zb^P_{\delta,\pi}(F' \circ F)
\]
compatibly with the left action of $A_{EF}(M_3)$ and the right action of $A_{EF}(M_1)$ as desired.
\end{proof}

\section{Degree and parity shifts}\label{sec:GradingsParities}

We explain here why we chose the formula \eqref{eq:DeltaADef} for $\delta_A$. As in \cite[Section 4]{ManionDHASigns}, we will postulate a general formula
\[
\delta = C_1 k_1 + \cdots + C_9 k_9
\]
for the grading shift associated to a sutured surface $F$, where
\begin{itemize}
    \item $k_1$ is the number of components
    \item $k_2$ is the genus (sum over all components)
    \item $k_3$ is the number of closed components
    \item $k_4$ is the number of non-closed components without $S_+$
    \item $k_5$ is the number of non-closed components without $S_-$
    \item $k_6$ is the number of $S_+$ intervals
    \item $k_7$ is the number of $S_+$ circles 
    \item $k_8$ is the number of $S_-$ circles
    \item $k_9$ is the number of boundary circles of $F$ with both $S_+$ and $S_-$.
\end{itemize}
As in \cite{ManionDHASigns}, $\delta$ is compatible with Lemma~\ref{lem:IntervalGluing} (interval gluing) if and only if the following system of equations is satisfied:
\begin{align}
    -C_1 + C_4 - 2C_6 + C_8 - 2C_9 &= 0 \quad \textrm{ (Case 1-1)} \label{it:Req1} \\ 
    -C_1 -2C_6 -C_9 &= 1 \quad \textrm{ (Case 1-2 or 1-3, no $S_-$ circle created)} \label{it:Req2} \\
    -C_1 -2C_6 + C_8 - 2C_9 &= 1 \quad \textrm{ (Cases 1-2 or 1-3, one $S_-$ circle created)}\label{it:Req2.5} \\
    -2C_6 + C_9 &= 1 \quad \textrm{ (Case 2-1a, no $S_-$ circle created)} \label{it:Req3} \\
    -2C_6 + C_8 &= 1 \quad \textrm{ (Case 2-1a, one $S_-$ circle created)} \label{it:Req4} \\
    -2C_6 + 2C_8 - C_9 &= 1 \quad \textrm{ (Case 2-1a, two $S_-$ circles created)} \label{it:Req5} \\
    C_4 - 2C_6 + 2C_8 - C_9 &= 0 \quad \textrm{ (Case 2-1b)} \label{it:Req6} \\
    C_2 - 2C_6 - C_9 &= 1 \quad \textrm{ (Case 2-2a, no $S_-$ circle created)} \label{it:Req7} \\
    C_2 - 2C_6 + C_8 - 2C_9 &= 1 \quad \textrm{ (Case 2-2a, one $S_-$ circle created)} \label{it:Req8} \\
    C_2 + C_4 - 2C_6 + C_8 - 2C_9 &= 0 \quad \textrm{ (Case 2-2b)} \label{it:Req9}
\end{align}
The general family of solutions is given by
\begin{equation}\label{eq:IntervalGeneralFamily}
(-2C_9, \, 2C_9, \, C_3, \, -1, \, C_5, \, (C_9 - 1)/2), \, C_7, \, C_9, \, C_9)
\end{equation}
for arbitrary values of $C_3,C_5,C_7,C_9$. Now, by examining the proof of Lemma~\ref{lem:CircleGluing}, we see that $\delta$ is compatible with circle gluing if and only if the following system of equations is satisfied:
\begin{align}
    -C_1 + C_3 - 2C_5 - 2C_7 &= 0 \quad \textrm{ (Case 1-1a)} \label{it:CircleReq1}\\
    -C_1 + C_4 - C_5 - 2C_7 &= 0 \quad \textrm{ (Case 1-1b)} \label{it:CircleReq2}\\
    -C_1 + C_4 - 2C_7 &= 0  \quad \textrm{ (Case 1-1c)} \label{it:CircleReq3}\\
    -C_1 - C_5 - 2C_7 &= 1 \quad \textrm{ (Case 1-2a)} \label{it:CircleReq4}\\
    -C_1  - C_5 - 2C_7 &=1 \quad \textrm{ (Case 1-2b or 1-3, no $S_-$ near $C_1$)} \label{it:CircleReq5}\\
    -C_1 - 2C_7 &= 1 \quad \textrm{ (Case 1-2b or 1-3, $S_-$ near $C_1$)} \label{it:CircleReq6}\\
    C_2 + C_3 - C_5 - 2C_7 &= 0 \quad \textrm{ (Case 2-1)} \label{it:CircleReq7}\\
    C_2 + C_4 - 2C_7 &= 0 \quad \textrm{ (Case 2-2)} \label{it:CircleReq8}\\
    C_2 - 2C_7 &= 1 \quad \textrm{ (Case 2-3)}. \label{it:CircleReq9}
\end{align}
From \eqref{it:CircleReq2} and \eqref{it:CircleReq3} we get $C_5 = 0$. Substituting \eqref{eq:IntervalGeneralFamily} into \eqref{it:CircleReq1}--\eqref{it:CircleReq9}, we get:
\begin{align*}
    2C_9 + C_3 - 2C_7 &= 0 \\
    2C_9 - 1 - 2C_7 &= 0 \\
    2C_9 -1 -2C_7 &= 0 \\
    2C_9 - 2C_7 &= 1 \\
    2C_9 - 2C_7 &= 1 \\
    2C_9 - 2C_7 &= 1 \\
    2C_9 + C_3 - 2C_7 &= 0 \\
    2C_9 - 1 - 2C_7 &= 0 \\
    2C_9 - 2C_7 &= 1.
\end{align*}
These equations hold if and only if $C_7 = C_9 - 1/2$ and $C_3 = -1$. Letting $C = C_9$, we arrive at the general family
\[
(-2C, \, 2C, \, -1, \, -1, \, 0, \, (C-1)/2, \, C - 1/2, \, C, \, C)
\]
of solutions to all the above equations together, so we can take
\[
\delta = -2C k_1 + 2Ck_2 - k_3 - k_4 + ((C-1)/2)k_6 + (C - 1/2)k_7 + C k_8 + C k_9.
\]
We also have
\[
h = -2k_1 + 2k_2 + 2k_3 + k_4 + k_5 + k_6 + k_7 + k_8 + k_9,
\]
so
\[
Ch = -2Ck_1 + 2Ck_2 + 2Ck_3 + Ck_4 + Ck_5 + Ck_6 + Ck_7 + Ck_8 + Ck_9.
\]
We can thus rewrite $\delta$ as
\[
\delta = Ch + (-2C - 1)k_3   + (-C-1)k_4 + (-C) k_5 + ((-C-1)/2)k_6 - (1/2) k_7.
\]
Letting $A = -C$, we get 
\begin{equation}\label{eq:DeltaADefAgain}
\delta = -Ah + (2A - 1)k_3 + (A-1)k_4 + Ak_5 + ((A-1)/2)k_6 - (1/2)k_7
\end{equation}
which recovers equation \eqref{eq:DeltaADef} for $\delta_A$.

\begin{proposition}\label{prop:NoAWorksForAllF}
    There is no $A \in \C$ such that \eqref{eq:DeltaADefAgain} is an integer for all sutured surfaces.
\end{proposition}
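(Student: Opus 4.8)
The plan is to pin down the possible values of $A$ by evaluating $\delta_A$ on a couple of explicit sutured surfaces and deriving a contradiction. Since $\delta_A(F)$ depends affine-linearly on $A$ for each fixed $F$, and since $A = 1/2$ and $A = 1$ already make $\delta_A$ integral on many surfaces (Example~\ref{ex:PantsIntro}), no single surface rules out all $A$; the argument will use two surfaces whose integrality constraints are mutually incompatible.

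First I would take $D$ to be the disk whose whole boundary is a single $S_+$ circle, with $\Lambda = \varnothing$ and $S_- = \varnothing$. In the notation of the invariants $k_1,\dots,k_9$ listed above, $D$ has $k_1 = 1$, $k_5 = 1$, $k_7 = 1$ and $k_i = 0$ otherwise, whence $h = 0$ (either from the displayed formula for $h$, or directly since $H_1(\mathrm{disk}, \partial\,\mathrm{disk}) = 0$). Substituting into \eqref{eq:DeltaADefAgain} gives $\delta_A(D) = A - 1/2$, so if $\delta_A$ is to be integral on every sutured surface then $A \in \tfrac12 + \Z$.

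Next I would take $D'$ to be the disk whose boundary carries two points of $\Lambda$, dividing the boundary circle into one $S_+$ interval and one $S_-$ interval. Then $D'$ has $k_1 = 1$, $k_6 = 1$, $k_9 = 1$ and $k_i = 0$ otherwise, so again $h = 0$, and \eqref{eq:DeltaADefAgain} gives $\delta_A(D') = (A-1)/2$. Integrality forces $(A-1)/2 \in \Z$, i.e.\ $A \in 1 + 2\Z \subseteq \Z$. Since $\left(\tfrac12 + \Z\right) \cap \Z = \varnothing$, no $A \in \C$ can meet both requirements, proving the proposition.

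I do not expect a genuine obstacle here: the only point requiring care is correctly reading off the combinatorial invariants of the two disks — in particular that $k_5 = k_7 = 1$ for $D$, that $k_6 = k_9 = 1$ for $D'$, and that $h = 0$ in both cases — after which the conclusion is immediate from \eqref{eq:DeltaADefAgain}.
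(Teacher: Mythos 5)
Your proof is correct and uses essentially the same strategy as the paper: evaluate $\delta_A$ on two simple disks and derive incompatible integrality constraints on $A$. The paper uses a disk with all-$S_+$ boundary (giving $A - 1/2 \in \Z$) and a disk with all-$S_-$ boundary (giving $A - 1 \in \Z$), while you replace the second with a disk having one $S_+$ interval and one $S_-$ interval (giving $(A-1)/2 \in \Z$); both choices yield the same contradiction, and your reading of the invariants $k_i$ and of $h$ is accurate.
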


\begin{proof}
    Evaluating \eqref{eq:DeltaADefAgain} on a disk with boundary in $S_-$, we get $\delta = A-1$, so $A$ must be an integer. Evaluating on a disk with boundary in $S^+$ instead, we get $\delta = A - 1/2$, so $A$ cannot be an integer.
\end{proof}

\bibliographystyle{alpha}
\bibliography{biblio.bib}

\end{document}